\documentclass{amsart}
\usepackage[numbers,sort&compress]{natbib}
\usepackage[euler-digits]{eulervm}
\usepackage{tikz}
\usetikzlibrary{snakes,arrows,shapes}
\usepackage[matrix,arrow,curve]{xy}
\newcommand{\f}[1]{\underline{#1}}
\newcommand{\s}[1]{\overline{#1}}
\renewcommand{\pmod}[1]{\left(\mathsf{mod}\;#1\right)}
\DeclareMathOperator{\p}{\mathsf{P}}
\DeclareMathOperator{\F}{\mathsf{F}}
\DeclareMathOperator{\E}{\mathsf{E}}
\DeclareMathOperator{\J}{\mathsf{j}}
\newcommand{\sym}[1]{\mathfrak{S}_{#1}}
\let\ker\relax\DeclareMathOperator{\ker}{\mathsf{ker}}

\DeclareMathOperator{\irr}{\mathsf{Irr}}
\DeclareMathOperator{\rad}{\mathsf{Rad}}

\renewcommand{\mid}{\;\middle\vert\;}
\definecolor{red}{rgb}{1,0,0}
\newcommand{\bb}[2]{\genfrac{\langle}{\vert}{0pt}{}{#1}{#2}}
\newcommand{\lb}[2]{\genfrac{\langle}{.}{0pt}{}{#1}{#2}}
\newcommand{\mb}[2]{\genfrac{}{}{0pt}{}{#1}{#2}}
\newcommand{\rb}[2]{\genfrac{.}{\vert}{0pt}{}{#1}{#2}}

\usepackage{aliascnt}
\usepackage[colorlinks=true,linkcolor=blue]{hyperref}
\def\equationautorefname~#1\null{(#1)\null}
\def\itemautorefname~#1\null{(#1)\null}
\def\sectionautorefname~#1\null{\S#1\null}
\newtheorem{theorem}{Theorem}  
\newaliascnt{proposition}{theorem}  
\newtheorem{proposition}[proposition]{Proposition}  
\aliascntresetthe{proposition}  
  
\newaliascnt{corollary}{theorem}  
\newtheorem{corollary}[corollary]{Corollary}  
\aliascntresetthe{corollary}  
  
\newaliascnt{lemma}{theorem}  
\newtheorem{lemma}[lemma]{Lemma}  
\aliascntresetthe{lemma}  
  
\newaliascnt{conjecture}{theorem}  
\newtheorem{conjecture}[conjecture]{Conjecture}  
\aliascntresetthe{conjecture}  

\theoremstyle{definition}
\newaliascnt{remark}{theorem}  
  
\aliascntresetthe{remark}

\title[Quiver Presentation Descent Symmetric]
{On the Quiver Presentation of the Descent Algebra of the Symmetric Group} 
\author{Marcus Bishop}
\address{Ruhr-Universit\"at Bochum\\Fakult\"at f\"ur Mathematik}
\email{marcus.bishop@rub.de}
\author{G\"otz Pfeiffer}
\address{National University of Ireland, Galway\\
School of Mathematics, Statistics, and Applied Mathematics}
\email{goetz.pfeiffer@nuigalway.ie}
\keywords{Descent algebra, symmetric group, quiver, presentation}
\subjclass[2000]{Primary 16G20; Secondary 20F55}

\begin{document}
\begin{abstract}
  We describe a presentation of the descent algebra of the symmetric
  group $\sym{n}$ as a quiver with relations.
  This presentation arises from a new construction of the
  descent algebra as a homomorphic image of an
  algebra of forests of binary trees, which can be identified with
  a subspace of the free Lie algebra.
  In this setting we provide
  a short new proof of the known fact that the quiver of the descent
  algebra of $\sym{n}$ is given by restricted partition refinement.
  Moreover, we describe certain families of relations and
  conjecture that for fixed $n\in\mathbb{N}$ the finite set of relations
  from these families that are relevant for the descent algebra of
  $\sym{n}$ generates the ideal of relations of
  an explicit quiver presentation of that algebra.
\end{abstract}

\maketitle

\section{Introduction}\label{IntroductionSection}
Let $\left(W,S\right)$ be a finite Coxeter system and
let $k$ be a field of characteristic zero.
For all $J\subseteq S$
we denote the parabolic subgroup
$\left\langle J\right\rangle$ of $W$ by $W_J$
and the set of minimal length left coset representatives
of $W_J$ in $W$ by $X_J$.
In 1976 Solomon proved \cite{solomon} that
the elements $x_J=\sum_{x\in X_J} x\in kW$ with $J\subseteq S$ satisfy
\begin{equation}\label{SolomonTheorem}
x_Jx_K=\sum_{L\subseteq S}c_{JKL}x_L
\end{equation}
for certain integers $c_{JKL}$ with $J,K,L\subseteq S$.
This implies that the linear span
$\left\langle x_J\mid J\subseteq S\right\rangle$
is a {\em subalgebra} of $kW$. This algebra is called the 
{\em descent algebra} of $W$ and is denoted by $\Sigma\left(W\right)$.

Solomon shows \cite{solomon} that
the structure constants $c_{JKL}$ in \autoref{SolomonTheorem} are the same
constants appearing in the Mackey formula
for the product of the permutation characters
$\mathsf{Ind}^W_{W_J}1$ and $\mathsf{Ind}^W_{W_K}1$
in terms of the characters $\mathsf{Ind}^W_{W_L}1$ with $L\subseteq S$.
Therefore the map $\theta:\Sigma\left(W\right)\to
k\irr\left(W\right)$ given by
$x_J\mapsto \mathsf{Ind}^W_{W_J}1$ for all $J\subseteq S$
is a homomorphism of $k$-algebras,
where $k\irr\left(W\right)$ is the character ring of $W$ over $k$.
Solomon also shows that $\ker\theta$ is the
radical of $\Sigma\left(W\right)$.

We identify $k\irr\left(W\right)$ with the ring $k^m$
under pointwise addition and multiplication, where $m$ is the number
of conjugacy classes of $W$.
Then the map $\theta$ above presents the semisimple algebra
$\Sigma\left(W\right)/\mathop{\mathsf{Rad}}\Sigma\left(W\right)$
as a subalgebra of $k^m$.
Since $k^m$ is commutative, the simple $\Sigma\left(W\right)$-modules
are all one-dimensional over $k$.
This means that $\Sigma\left(W\right)$ is a basic algebra
which thereby admits a quiver presentation. See \cite{blue} for more
information about basic algebras and quivers.
The preceding discussion also shows that we can
assume that $k$ is the field $\mathbb{Q}$ of rational numbers,
since the permutation characters 
$\mathsf{Ind}^W_{W_J}$ take values in $\mathbb{Z}$.

The aim of this paper is to calculate and study the quiver
presentation of $\Sigma\left(W\right)$ when
$W$ is the symmetric group $\sym{n}$ of degree $n\ge 0$.
An elementary proof of formula \autoref{SolomonTheorem} in this case
was given  by Atkinson \cite{atkinson} in 1986.
The Coxeter generating set of $\sym{n}$ is
$S=\left\{1,2,\ldots,n-1\right\}$ where we identify each
$s\in S$ with the transposition exchanging
the points $s$ and $s+1$.
We remark that the set $X_J$ has a description
in terms of the graphs of the elements of $\sym{n}$.
Here we regard $w\in\sym{n}$ as a function from $\left\{1,2,\ldots,n\right\}$
to itself and the graph of $w$ as the set of points
$\left\{\left(i,i.w\right)\mid 1\le i\le n\right\}$.
Then $X_J$ is the set of all $w\in\sym{n}$ such that
$i.w>\left(i+1\right).w$ holds only
when $i\in J$. In other words, the only points where
the graph of $w$ is {\em descending} are in $J$.
The name {\em descent algebra} derives from this interpretation.

The algebra $\Sigma\left(\sym{n}\right)$ plays a major role in the
book by Blessenohl and Schocker \cite{ncct}
where the authors study the character theory of $\sym{n}$ through
an extension of the map $\theta$ above to $k\sym{n}$.
As in \cite{ncct}, this article takes the point
of view of studying $\Sigma\left(\sym{n}\right)$
for all $n\ge 0$ simultaneously
by uniting families of objects indexed by $n$ into single objects.
The industry of studying
$\Sigma\left(\sym{n}\right)$ through its quiver presentation 
begins in 1989 with Garsia and Reutenauer's description \cite{decomposition} of
the quiver of $\Sigma\left(\sym{n}\right)$.
We derive this quiver in \autoref{QuiverSection} using an algebra $k\mathcal{L}_n$
that we describe below.
Garsia and Reutenauer also calculate the Cartan invariants
and the projective indecomposable modules of $\Sigma\left(\sym{n}\right)$.
Aktinson \cite{revisited} derives these using elementary methods.

Bergeron and Bergeron \cite{quiverb1,quiverb2} partially describe
the quiver of $\Sigma\left(W\right)$ for $W$ of type~$B_n$  in 1992 
with their calculation of the idempotents of $\Sigma\left(W\right)$,
which correspond with the vertices of the quiver.
The full quiver in type~$B_n$ was calculated by Saliola \cite{quiverb3} 
in 2008 using hyperplane arrangements.

In a somewhat different direction, but amounting to essentially
the same information as a quiver presentation, the {\em module structure}
of $\Sigma\left(\sym{n}\right)$ was calculated
\cite{module1,module2} and later expanded by Schocker \cite{quivera},
where he showed that articles \cite{module2} and \cite{module1}
essentially calculate the quiver of $\Sigma\left(\sym{n}\right)$.
One component of the module structure of $\Sigma\left(W\right)$
is the length of its Loewy series,
which was calculated for $W$ of type $D_n$ and~$n$ odd
by Saliola \cite{loewy2} in 2010 after the calculation
by Bonnaf\'e and Pfeiffer in 2008 \cite{loewy1} for the
remaining finite irreducible Coxeter groups.

A first step towards the calculation
of the quiver for arbitrary Coxeter groups lies in
Bergeron, Bergeron, Howlett, and Taylor's
calculation \cite{idempotents} of a basis of idempotents
of $\Sigma\left(W\right)$
for any Coxeter group $W$, since these idempotents serve
as the vertices of the quiver.
Pfeiffer's article \cite{gotz} 
builds on the idempotent construction above
and shows how one can construct the quiver and the relations for the
presentation of $\Sigma\left(W\right)$.
Since Pfeiffer's construction provides the foundation for this article,
we briefly summarize it in the following theorem.

\begin{theorem}\label{GotzSummary}
Let $\left(W,S\right)$ be a finite Coxeter system.
Then there exist
\begin{itemize}
\item a category $\mathcal{A}$
\item an action of the free monoid $S^\ast$ on $\mathcal{A}$
that partitions $\mathcal{A}$ into orbits
\item subsets $\Lambda$ and $\mathcal{E}$ of the set $\mathcal{X}$
of orbits of $\mathcal{A}$
\item a linear map $\Delta:k\mathcal{A}\to k\mathcal{P}$
(where $\mathcal{P}$ is the power set of $S$)
\end{itemize}
such that
\begin{itemize}
\item $k\mathcal{X}$ is a subalgebra of $k\mathcal{A}$ (where we identify 
the orbit of an element of $\mathcal{A}$
with the sum of its elements in $k\mathcal{A}$)
\item $\Lambda$ is a complete set of pairwise orthogonal primitive idempotents of $k \mathcal{X}$
\item $\lambda\left(k\mathcal{X}\right)\lambda' \cap \mathcal{X}$ is a basis of the subspace 
$\lambda\left(k\mathcal{X}\right)\lambda'$ for all $\lambda, \lambda' \in \Lambda$
\item the pair $\left(Q,\ker\Delta\right)$
is a quiver presentation of $\Sigma\left(W\right)^{\mathrm{op}}$
where $Q$ is the quiver with vertices $\Lambda$ and edges $\mathcal{E}$.
\end{itemize}
\end{theorem}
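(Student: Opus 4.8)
The statement assembles the main results of \cite{gotz}, so the plan is to recall that construction and describe the verifications it requires. Let $\mathcal{A}$ be the category of \emph{alleys} of \cite{gotz}: its objects are the subsets of $S$, and a morphism is a chain $L=L_0\supset L_1\supset\dots\supset L_\ell$ of single-generator deletions, $\lvert L_{i-1}\setminus L_i\rvert=1$, with composition given by concatenation. Let $S^\ast$ act on $\mathcal{A}$ by the shift operation of \cite{gotz}, transporting an alley along the canonical identifications between the coset systems of $W_L$ and of a parabolic containing it --- the same bookkeeping that underlies the elementary proofs of \autoref{SolomonTheorem} in \cite{atkinson,revisited}. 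Write $\mathcal{X}$ for the set of orbits (the \emph{streets}), each identified with the sum of its alleys in $k\mathcal{A}$, and let $\Delta\colon k\mathcal{A}\to k\mathcal{P}$ be the linear map of \cite{gotz}, a M\"obius-type alternating sum over the subsets met by an alley, normalised so that the street of the length-$0$ alley at $L$ is sent to the class of $x_L$ under the identification $k\mathcal{P}\cong\Sigma\left(W\right)$. It remains to produce $\Lambda$ and $\mathcal{E}$ and to establish the four ``such that'' clauses.

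First I would show that $k\mathcal{X}$ is a subalgebra of $k\mathcal{A}$: expanding the product of two orbit sums in $k\mathcal{A}$ and regrouping the resulting alleys into orbits, one checks that the multiplicities are constant along orbits --- a combinatorial lift of Atkinson's counting proof \cite{atkinson} of \autoref{SolomonTheorem}, with the shift action in the role of Solomon's double-coset count. Granting this, one checks that $\Delta$ restricts to a surjective homomorphism $k\mathcal{X}\to\Sigma\left(W\right)^{\mathrm{op}}$: surjective because the length-$0$ streets already hit the $x_L$-basis, and onto the \emph{opposite} algebra because the composition of alleys runs opposite to the product of the $x_J$. Next I would produce $\Lambda$ by carrying out inside $k\mathcal{X}$ the inductive construction of orthogonal primitive idempotents of Bergeron--Bergeron--Howlett--Taylor \cite{idempotents}: each idempotent is written as an explicit rational combination of streets, and orthogonality, primitivity, and completeness are checked by the triangular induction of \cite{idempotents}. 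This yields $\Lambda\subseteq\mathcal{X}$, the vertex set of the quiver.

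The crux is the Peirce-compatibility clause: $\lambda\left(k\mathcal{X}\right)\lambda'\cap\mathcal{X}$ is a basis of $\lambda\left(k\mathcal{X}\right)\lambda'$ for all $\lambda,\lambda'\in\Lambda$. Linear independence is automatic, since distinct streets are disjoint subsets of the finite set $\mathcal{A}$; the substance is that left and right multiplication by the idempotents annihilates no street, and that each Peirce component is spanned by exactly the streets meeting it. I would prove this by a triangularity argument: grade the streets by the number of deletions occurring in an alley (the \emph{depth}), show that $\lambda$ and $\lambda'$ act unitriangularly for this filtration --- using that the idempotents of \cite{idempotents} are triangular for refinement of parabolic subgroups --- and conclude that $\lambda\left(k\mathcal{X}\right)\lambda'$ is the span of, hence by disjointness has as basis, the streets it contains; equivalently, one transports the known Peirce decomposition of $\Sigma\left(W\right)$ through $\Delta$ and checks that it respects the depth filtration. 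With this in hand, take $\mathcal{E}\subseteq\mathcal{X}$ to be a set of streets in $\bigoplus_{\lambda,\lambda'}\lambda\,\rad\left(k\mathcal{X}\right)\,\lambda'$ projecting to a basis of $\rad\left(k\mathcal{X}\right)/\rad^{2}\left(k\mathcal{X}\right)$, and let $Q$ be the quiver with vertices $\Lambda$ and edges $\mathcal{E}$. Since the edges generate $\rad\left(k\mathcal{X}\right)$, the path-algebra map $kQ\to k\mathcal{X}$ is surjective by Nakayama's lemma (see \cite{blue}), and composing with $\Delta$ gives a surjection $\pi\colon kQ\to\Sigma\left(W\right)^{\mathrm{op}}$ whose kernel contains $\ker\Delta$ by construction; that $\ker\pi$ is generated by $\ker\Delta$ then follows from a dimension count in which the Peirce bases above control both $\dim k\mathcal{X}$ and the dimensions of the Peirce components of $\Sigma\left(W\right)^{\mathrm{op}}$.

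I expect the main obstacle to be the Peirce-basis clause, that is, the compatibility of the orbit decomposition of $\mathcal{A}$ with the idempotent decomposition of $\Sigma\left(W\right)$. Everything else is either standard Coxeter-group combinatorics (the shift action and its orbits), a lift of Atkinson's elementary argument (closure of $k\mathcal{X}$), an adaptation of \cite{idempotents} to $k\mathcal{X}$ (the idempotents $\Lambda$), or formal once the Peirce bases are in place (the quiver and the presentation). The delicate point --- and, I expect, the technical heart of \cite{gotz} --- is the unitriangularity, which forces the shift action and the map $\Delta$ to be chosen in a very particular way; a secondary subtlety is to calibrate $\Delta$ so that its kernel, pulled back to $kQ$, is \emph{exactly} the relation ideal and not merely contained in it.
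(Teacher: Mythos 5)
A point of calibration first: the paper offers no proof of this statement. \autoref{GotzSummary} is explicitly a summary of Pfeiffer's construction in \cite{gotz}, quoted as the foundation for the rest of the article, so the only fair comparison is between your outline and that cited construction. Your proposal identifies the right objects in broad strokes (alleys, the shift action of $S^\ast$, streets, the difference map $\Delta$), but as a proof it has genuine gaps rather than being a variant route to the same result.

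The most concrete gap concerns $\Lambda$. The statement requires $\Lambda\subseteq\mathcal{X}$: the vertices must be actual streets, i.e.\ orbit sums of alleys. In the construction being summarized (recalled in \autoref{IntroductionSection}), $\Lambda$ is simply the set of orbits of the length-zero alleys $\left(J;\right)$; these orbit sums are already idempotent in $k\mathcal{X}$, since $\left(J;\right)\circ\left(K;\right)$ is nonzero only when $K=J$, and the nontrivial content is their primitivity together with the Peirce-basis clause. Your plan instead manufactures $\Lambda$ by running the Bergeron--Bergeron--Howlett--Taylor construction \cite{idempotents} inside $k\mathcal{X}$; those idempotents are in general proper rational combinations of streets, so your conclusion that ``this yields $\Lambda\subseteq\mathcal{X}$'' does not follow and, as written, conflicts with the very statement you are proving. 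Second, the two clauses that carry all the weight --- that $\lambda\left(k\mathcal{X}\right)\lambda'\cap\mathcal{X}$ is a basis of $\lambda\left(k\mathcal{X}\right)\lambda'$, and that the relation ideal of the presentation is exactly (not merely contains) the transported $\ker\Delta$ --- are only gestured at (``a triangularity argument'', ``a dimension count''), and you yourself flag them as the technical heart. Without those arguments supplied, the proposal reconstructs the architecture of the statement but not its proof; the paper itself defers precisely this content to \cite{gotz}.
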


We briefly repeat the definitions of the constructions introduced
in \autoref{GotzSummary} needed in this article. The category
\[\mathcal{A}=\left\{\left(J;s_1,s_2,\ldots,s_l\right)\mid\rule{0pt}{12pt}
\text{$\left\{s_1,s_2,\ldots,s_l\right\}\subseteq
J\subseteq S$ with $s_1,s_2,\ldots,s_l$ distinct}
\right\}\]
has a partial product
$\circ:\mathcal{A}\times\mathcal{A}\to\mathcal{A}$ defined by
\[\left(J;s_1,s_2,\ldots,s_l\right)\circ\left(K;t_1,t_2,\ldots,t_m\right)
=\left(J;s_1,s_2,\ldots,s_l,t_1,t_2,\ldots,t_m\right)\]
if $K=J\setminus\left\{s_1,s_2,\ldots,s_l\right\}$.
The action of $S^\ast$ on $\mathcal{A}$ is given by
\begin{equation}\label{LSDefinition}
\left(J;s_1,s_2,\ldots,s_l\right).t 
=\left(J^\omega;s_1^\omega,s_2^\omega,\ldots,s_l^\omega\right)
\end{equation}
for $t\in S$ and $\left(J;s_1,s_2,\ldots,s_l\right)\in\mathcal{A}$ where 
$\omega=w_Jw_{J\cup\left\{t\right\}}$ and where $w_J$ and 
$w_{J\cup\left\{t\right\}}$ are the longest elements of the parabolic subgroups
$W_J$ and $W_{J\cup\left\{t\right\}}$ respectively.
The superscripts in \autoref{LSDefinition} denote conjugation,
so for example $s_1^\omega=\omega^{-1}s_1\omega$.

We define a difference operator $\delta$
on $k\mathcal{A}$
by $\delta\left(a\right)=a$ if $l=0$
or by $\delta\left(a\right)=b-b.s_1$ if $l>0$ 
for all $a=\left(J;s_1,s_2,\ldots,s_l\right)$ 
where $b=\left(J\setminus\left\{s_1\right\};s_2,\ldots,s_l\right)$.
Then $\Delta$ is defined by iterating $\delta$ as many times as possible, so 
$\Delta\left(a\right)=\delta^l\left(a\right)$
for $a\in\mathcal{A}$ as above.
Finally, $\Lambda$ is the set of orbits of elements of the form
$\left(J;\right)$ and $\mathcal{E}$ can be calculated
using an algorithm.

Once the quiver provided by \autoref{GotzSummary} has been identified,
it remains to calculate the relations of the presentation.
While difficult in practice, this
amounts in principle only to transferring $\ker\Delta$ to $kQ$.
Pfeiffer \cite{exceptional} has done this 
with his explicit quiver presentations of the
descent algebras of the Coxeter
groups of exceptional and non-crystallographic type.
Other than these calculations, no quiver presentations
of descent algebras are known. However, in contrast with
the finite calculations in \cite{exceptional}, this paper deals with
the calculation of presentations of the algebras in
the infinite family $\left\{\Sigma\left(\sym{n}\right)\mid n\ge 0\right\}$.

The following is an outline of this paper.
The algebras and maps introduced in the outline
are shown in the following diagram.
\[\xymatrix{
kQ_n\ar[r]^\iota&k\mathcal{L}_n\ar[r]\ar[d]^\E&kL_n\ar[d]^\E\ar[dr]^\Delta\\
&k\mathcal{M}_n\ar[r]&kM_n\ar[r]^\pi&k\mathbb{N}^\ast
}\]
To calculate a presentation of $\Sigma\left(\sym{n}\right)$
we first develop a simpler description of $\mathcal{A}$.
Namely, we show in \autoref{EquivalenceSection} that
each element of $\mathcal{A}$ can be represented as a sequence of binary trees,
or a {\em forest}.
The category $L_n$ in the diagram above is the category
of forests corresponding with
elements of $\mathcal{A}$.
The definition and basic properties of forests
are the subject of \autoref{TreeSection}.
We show in \autoref{ActionSection} that
the monoid action of $S^\ast$ on $L_n$
amounts simply to rearrangement
of the trees of a forest, so the $S^\ast$-orbit of
an element of $\mathcal{A}$ corresponds with
the sum of all rearrangements of the corresponding forest.
This action yields a set
$\mathcal{L}_n$ of orbit sums in $kL_n$
corresponding with $\mathcal{X}$ in \autoref{GotzSummary}.
We show in \autoref{DeltaSection} that
the map $\Delta$ also has a simple description
when we represent the elements of $\mathcal{A}$ as forests.
Namely, we introduce sets $M_n$ and $\mathcal{M}_n$
analogous to $L_n$ and $\mathcal{L}_n$
in \autoref{UnlabeledSection} 
and we show in \autoref{AlignmentSection}
that $\Delta$ factors through $kM_n$
as the composition of a natural map $\E:kL_n\to kM_n$
with the map $\pi:kM_n\to k\mathbb{N}^\ast$ that
replaces all the nodes of a forest with the Lie 
bracket in the free associative algebra $k\mathbb{N}^\ast$.
This allows us to identify $\Sigma\left(\sym{n}\right)^\mathsf{op}$
with a quotient of $k\mathcal{L}_n$ in \autoref{LIsomorphism}.
We introduce a quiver $Q_n$ in \autoref{QuiverSection}
and show in \autoref{BranchSection}
that the path algebra of $Q_n$ can be embedded into
the algebra $k\mathcal{L}_n$ of forest classes
through the injective anti-homomorphism $\iota$ shown in the diagram above.
We also show in \autoref{FactorizationSection}
that $Q_n$ is the ordinary quiver of $\Sigma\left(\sym{n}\right)$.
This means that $\Sigma\left(\sym{n}\right)$
can be identified with the quotient of
the path algebra of $Q_n$ by an ideal that can be explicitly calculated.
We conjecture in \autoref{RelationsSection} that
a generating set for this ideal can be produced through a simple procedure.
Finally we calculate the presentation of $\Sigma\left(\sym{8}\right)$
in \autoref{ExampleSection}, thus verifying our conjecture
in this particular example. 

\section{Compositions, partitions, and rearrangement}\label{CompositionSection}
Much of the charm of the theory developed in this paper
stems from the reduction of complicated combinatorial operations
to the simpler operation of rearrangement,
which is the subject of this section.
We denote the free monoid on a set $\Omega$ by $\Omega^\ast$.
This is the set of all formal products $x_1x_2\cdots x_j$ where
$x_i\in\Omega$ for all $1\le i\le j$.
The binary operation on $\Omega^\ast$ is not denoted.
In this paper, an important instance
of this construction occurs when $\Omega$
is the set $\mathbb{N}$ of natural numbers, which does {\em not} include $0$.
The elements of $\mathbb{N}^\ast$ are called {\em compositions}
and the numbers $x_i$ in a composition $x_1x_2\cdots x_j$
are called its {\em parts}.

The symmetric group $\sym{j}$ acts on compositions with $j$ parts by
\[\left(x_1x_2\cdots x_j\right).\pi
=x_{1.\pi^{-1}}x_{2.\pi^{-1}}\cdots x_{j.\pi^{-1}}\]
for $\pi\in\sym{j}$. This action 
is called the {\em P\'olya action}.
The orbits of the P\'olya action on $\mathbb{N}^\ast$
are called {\em partitions}. We represent 
a partition by any of its representatives
when this causes no confusion.

\section{Trees and forests}\label{TreeSection}
A {\em (binary) tree} is either a natural number or a diagram
$\vcenter{\begin{xy}<.2cm,0cm>:
(0,1)="1";
"1";(-1,0)*+!U{X}**\dir{-};
"1";(1,0)*+!U{Y}**\dir{-};
\end{xy}}$ where $X$ and $Y$ are trees.
Trees of the first type are called {\em leaves} while
trees of the second type are called {\em nodes}.
A {\em labeled forest} is a sequence of trees
whose nodes are labeled by natural numbers in such a way that the label 
of every node is greater than that of its parent if it has one,
and each number $1,2,\ldots,l$ is the label of exactly one node,
where $l$ is the number of nodes in the sequence.
For example
\begin{equation}\label{YForest}
\vcenter{\begin{xy}<.3cm,0cm>:
(2,3)="1"*+!U{_3};
"1";(1,2)*+!U{\color{red}1}**\dir{-};
"1";(3,2)*+!U{\color{red}2}**\dir{-};
(5,3)="1"*+!U{_1};
"1";(4,2)*+!U{\color{red}1}**\dir{-};
"1";(7,2)="2"*+!U{_2}**\dir{-};
"2";(6,1)*+!U{\color{red}3}**\dir{-};
"2";(8,1)*+!U{\color{red}1}**\dir{-};
(10,3)="1"*+!U{_4};
"1";(9,2)*+!U{\color{red}2}**\dir{-};
"1";(11,2)*+!U{\color{red}1}**\dir{-};
\end{xy}}
\end{equation}
is a labeled forest. Let $Y$ be a labeled forest.
The sequence of leaves of $Y$
is called its {\em foliage} and is denoted by $\f{Y}$.
The sum of the leaves of a tree is called its {\em value}.
The sequence of values of the trees of $Y$
is called its {\em squash} and is denoted by $\s{Y}$.
The number of nodes in $Y$
is called its {\em length}
and is denoted by $\ell\left(Y\right)$.
For example, if $Y$ is the forest shown in \autoref{YForest}
then $\f{Y}=1213121$ and $\s{Y}=353$
while $\ell\left(Y\right)=4$.

Whenever two forests $X$ and $Y$ satisfy $\f{X}=\s{Y}$
we define a product $X\bullet Y$ by replacing the leaves
of $X$ with the trees of $Y$.
For example, if $X$ is the forest
$\vcenter{\begin{xy}<.25cm,0cm>:
(2,2)="1"*+!U{_1};
"1";(1,1)*+!U{\color{red}3}**\dir{-};
"1";(3,1)*+!U{\color{red}5}**\dir{-};
(4,2)*+!U{\color{red}3};
\end{xy}}$ and $Y$ is the forest shown in \autoref{YForest}
then $\f{X}=353=\s{Y}$ so that
\begin{equation}\label{XYForest}
\vcenter{\begin{xy}<.3cm,0cm>:
(4,4)="1"*+!U{_1};
"1";(2,3)="2"*+!U{_4}**\dir{-};
"2";(1,2)*+!U{\color{red}1}**\dir{-};
"2";(3,2)*+!U{\color{red}2}**\dir{-};
"1";(6,3)="3"*+!U{_2}**\dir{-};
"3";(5,2)*+!U{\color{red}1}**\dir{-};
"3";(8,2)="4"*+!U{_3}**\dir{-};
"4";(7,1)*+!U{\color{red}3}**\dir{-};
"4";(9,1)*+!U{\color{red}1}**\dir{-};
(11,4)="1"*+!U{_5};
"1";(10,3)*+!U{\color{red}2}**\dir{-};
"1";(12,3)*+!U{\color{red}1}**\dir{-};
\end{xy}}
\end{equation}
is the product $X\bullet Y$.
Note that the node labels of $Y$ must be incremented 
by $\ell\left(X\right)$ to ensure that
the product will also be a labeled forest.

All the definitions above can be made mathematically precise
by defining the set of labeled trees $\mathbb{L}$
to be the minimal set containing
$\mathbb{N}$ and also containing the tuple
$\left(X_1,i,X_2\right)$ whenever
$X_1,X_2\in\mathbb{L}$ and $i\in\mathbb{N}$.
A labeled tree of the form $\left(X_1,i,X_2\right)$
should also satisfy the labeling condition
$i_1,i_2<i$ where $X_1=\left(X_{11},i_1,X_{12}\right)$
and $X_2=\left(X_{21},i_2,X_{22}\right)$.
We define the squash of a labeled tree $X\in\mathbb{L}$ by the formula
\begin{equation}\label{SquashTreeDefinition}
\s{X}=\begin{cases}X&\text{if $X\in\mathbb{N}$}\\
\s{X_1}+\s{X_2}&\text{if $X=\left(X_1,i,X_2\right)$}\end{cases}
\end{equation}
and we similarly define the foliage and length of $X$.
Then a labeled forest is an element of the
free monoid on $\mathbb{L}$ which satisfies the labeling condition
dealing with unique node labels from the original definition.
The definition in \autoref{SquashTreeDefinition}
extends to labeled forests by
$\s{X_1X_2\cdots X_j}=
\s{X_1}\;
\s{X_2}\cdots
\s{X_j}$ where $X_1,X_2,\ldots,X_j\in\mathbb{L}$
and similarly for the foliage and length analogs of
\autoref{SquashTreeDefinition}.

\begin{lemma}\label{UniqueFactorization}
A labeled forest of length at least one
can be uniquely factorized as a product of labeled forests
of length one.
\end{lemma}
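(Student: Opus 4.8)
The plan is to prove existence and uniqueness together by induction on the length $\ell(Y)$, at each step splitting off a single length-one factor from the right-hand end of the product. I would first note that $\bullet$ is associative wherever it is defined---a routine unwinding of the substitution rule together with the ``increment by $\ell$'' rule on node labels---so that iterated products may be written without parentheses. The base case $\ell(Y)=1$ is trivial, since then $Y$ is itself a forest of length one and lengths add under $\bullet$.

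The inductive step will rest on two observations. First, in any product $F_1\bullet\cdots\bullet F_m$ of length-one forests the increment rule forces the single node of $F_i$ to be relabelled $i$; hence $m=\ell(Y)$ and the node of $Y$ carrying the maximal label $\ell(Y)$ is exactly the node contributed by the final factor $F_m$. Second, that maximal-label node $v$ of $Y$ necessarily has two leaves as children, because a child node would need a label exceeding $\ell(Y)$, violating the labelling condition. Given these, I would define the splitting explicitly: let $Y'$ be $Y$ with the subtree $v$ collapsed to a single leaf whose value equals the value of $v$ (this is again a labelled forest, now of length $\ell(Y)-1$, since deleting the maximal label leaves $1,\dots,\ell(Y)-1$ each used exactly once and preserves every parent/child inequality), and let $F$ be the length-one forest obtained from the foliage $\f{Y'}$ by re-expanding that one leaf back into the single node with the two original leaf children. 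One then checks $\f{Y'}=\s{F}$ and, directly from the definition of $\bullet$, that $Y=Y'\bullet F$ (the node of $F$ gets relabelled $1+\ell(Y')=\ell(Y)$ and recovers $v$, while every other leaf of $Y'$ is replaced by itself). Existence now follows by applying the induction hypothesis to $Y'$.

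For uniqueness, given $Y=G_1\bullet\cdots\bullet G_m$ with each $G_i$ of length one, the first observation yields $m=\ell(Y)$ and identifies the node of $G_m$ with the maximal-label node $v$ of $Y$; since $G_m$ has length one this already pins down $G_m=F$, and then $G_1\bullet\cdots\bullet G_{m-1}$ must equal $Y'$, because right multiplication by the length-one forest $F$ merely re-expands one leaf of $G_1\bullet\cdots\bullet G_{m-1}$ into $v$, which is inverse to the pruning that produced $Y'$. The induction hypothesis then forces $G_i=F_i$ for $i<m$, completing the argument. I expect the only real obstacle to be making this last point airtight: carefully tracking how the node labels and the foliage/squash sequences transform under $\bullet$, so that ``prune the maximal node'' is genuinely a two-sided inverse of ``right-multiply by a length-one forest,'' including the bookkeeping of \emph{which} leaf position is affected.
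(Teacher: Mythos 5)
Your argument is correct, and it is essentially the mirror image of the paper's proof: the paper peels off the \emph{first} length-one factor, the one containing the node labeled $1$ (necessarily a root), and then decrements the remaining labels, whereas you peel off the \emph{last} factor, the one containing the node with maximal label $\ell(Y)$ (whose children are necessarily leaves), so that the pruned forest needs no relabeling. Both proofs hinge on the same key fact, namely that in a product of length-one forests the increment rule forces the node of the $i$-th factor to carry label $i$, and both run by induction on length, identifying the split-off factor uniquely and applying the induction hypothesis to the pruned forest; your write-up is somewhat more explicit about uniqueness and about the associativity of $\bullet$, which the paper leaves implicit. The only practical difference is that the paper's specific left-hand splitting $X=X'\bullet Y$ with the label-$1$ node in $X'$ is the form reused later (in \autoref{DeltaCoincides}, the recursive description of $\p$, and \autoref{IotaWiggle}), so within the paper that normalization is the convenient one, but as a proof of the lemma itself your right-hand peeling is equally valid and marginally cleaner on the label bookkeeping.
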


\begin{proof}
Suppose that $X=X_1X_2\cdots X_j$ is a labeled
forest, where $X_1,X_2,\ldots,X_j$ are trees.
Note that since $1$ is the smallest node label of $X$,
it must be the label of one of the trees $X_1,X_2,\ldots,X_j$, say $X_i$.
This means that
$X_i=\vcenter{\begin{xy}<.3cm,0cm>:
(2,2)="1"*+!U{_1};
"1";(1,1)*+!U{X_{i1}}**\dir{-};
"1";(3,1)*+!U{X_{i2}}**\dir{-};
\end{xy}}$ for some trees $X_{i1}$ and $X_{i2}$.
Let $Y$ be the forest obtained from
$X_1X_2\cdots X_{i-1}X_{i1}X_{i2}X_{i+1}\cdots X_j$
by reducing the node labels by one
and write $x_1x_2\cdots x_{i-1}x_{i1}x_{i2}x_{i+1}\cdots x_j=\s{Y}$. We put
\[X'=\vcenter{\begin{xy}<.3cm,0cm>:
(0,2)*+!UR{{\color{red}x_1x_2}\cdots{\color{red}x_{i-1}}};
(2,2)="1"*+!U{_1};
"1";(1,1)*+!U{{\color{red}x_{i1}}}**\dir{-};
"1";(3,1)*+!U{{\color{red}x_{i2}}}**\dir{-};
(4,2)*+!UL{{\color{red}x_{i+1}}\cdots{\color{red}x_j}};
\end{xy}}\]
so that $X=X'\bullet Y$.
Note that $X'$ is the unique forest of length one with squash $\s{X}$
and foliage $\s{Y}$.
Repeating the procedure with $Y$ in place
of $X$ yields the desired factorization by induction.
\end{proof}

For example, the forest in \autoref{XYForest}
can be factorized as
\begin{equation}
\label{XYFactorization}
\left(\vcenter{\begin{xy}<.2cm,0cm>:
(2,2)="1"*+!U{_1};
"1";(1,1)*+!U{\color{red}3}**\dir{-};
"1";(3,1)*+!U{\color{red}5}**\dir{-};
(4,2)*+!UL{\color{red}3};
\end{xy}}\right)\bullet
\left(\vcenter{\begin{xy}<.2cm,0cm>:
(1,2)*+!UR{\color{red}3};
(3,2)="1"*+!U{_1};
"1";(2,1)*+!U{\color{red}1}**\dir{-};
"1";(4,1)*+!U{\color{red}4}**\dir{-};
(5,2)*+!UL{\color{red}3};
\end{xy}}\right)\bullet\left(
\vcenter{\begin{xy}<.2cm,0cm>:
(1,2)*+!UR{\color{red}31};
(3,2)="1"*+!U{_1};
"1";(2,1)*+!U{\color{red}3}**\dir{-};
"1";(4,1)*+!U{\color{red}1}**\dir{-};
(5,2)*+!UL{\color{red}3};
\end{xy}}
\right)\bullet\left(
\vcenter{\begin{xy}<.2cm,0cm>:
(2,2)="1"*+!U{_1};
"1";(1,1)*+!U{\color{red}1}**\dir{-};
"1";(3,1)*+!U{\color{red}2}**\dir{-};
(4,2)*+!UL{\color{red}1313};
\end{xy}}
\right)\bullet\left(
\vcenter{\begin{xy}<.2cm,0cm>:
(1,2)*+!UR{\color{red}12131};
(3,2)="1"*+!U{_1};
"1";(2,1)*+!U{\color{red}2}**\dir{-};
"1";(4,1)*+!U{\color{red}1}**\dir{-};
\end{xy}}\right).
\end{equation}

The {\em value} of a forest is the sum of the values of its trees.
For the purpose of constructing a quiver presentation of
$\Sigma\left(\sym{n}\right)$ we restrict our attention to
the set $L_n$ of forests of value
$n\in\mathbb{N}$.
Then $L_n$ is a {\em category}, that is, a monoid whose product is only
partially defined.
Taking $X\bullet Y$ to be zero whenever
$\f{X}\ne\s{Y}$ makes $kL_n$ into a $k$-algebra.

\section{Equivalence of forests with alleys}\label{EquivalenceSection}
Recall from \autoref{IntroductionSection} that
\[\mathcal{A}=\left\{\left(J;s_1,s_2,\ldots,s_l\right)\mid\rule{0pt}{12pt}
\text{$\left\{s_1,s_2,\ldots,s_l\right\}\subseteq
J\subseteq S$ with $s_1,s_2,\ldots,s_l$ distinct}
\right\}\]
and that the partial product
$\circ:\mathcal{A}\times\mathcal{A}\to\mathcal{A}$ is defined by
\[\left(J;s_1,s_2,\ldots,s_l\right)\circ\left(K;t_1,t_2,\ldots,t_m\right)
=\left(J;s_1,s_2,\ldots,s_l,t_1,t_2,\ldots,t_m\right)\]
if $K=J\setminus\left\{s_1,s_2,\ldots,s_l\right\}$.
The category $\mathcal{A}$ is a combinatorial gadget used to construct
quiver presentations of the descent algebras of finite
Coxeter groups.
The elements of $\mathcal{A}$ are called {\em alleys}.
The number~$l$ is called the {\em length}
of the alley $a=\left(J;s_1,s_2,\ldots,s_l\right)$
and is denoted by $\ell\left(a\right)$.
One can also view $a$ as the chain
\begin{equation}\label{Chain}
J\supseteq J\setminus\left\{s_1\right\}\supseteq
J\setminus\left\{s_1,s_2\right\}\supseteq\cdots\supseteq
J\setminus\left\{s_1,s_2,\ldots,s_l\right\}
\end{equation}
of subsets of $\left\{1,2,\ldots,n\right\}$.
Then the product of two alleys corresponds
with the concatenation
of the corresponding chains whenever the concatenation is also a chain.

\begin{proposition}\label{Equivalence}
The category $\mathcal{A}$ 
associated to the Coxeter group $\sym{n}$
is equivalent to $L_n$
through a length-preserving functor.
\end{proposition}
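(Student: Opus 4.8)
The plan is to build a length-preserving functor $G\colon\mathcal{A}\to L_n$ and to verify that it is a bijection on objects and on every set of morphisms, so that it is an isomorphism of categories, hence in particular the desired equivalence. On objects there is a classical dictionary: a subset $J\subseteq S=\{1,2,\ldots,n-1\}$ corresponds to the composition of $n$ whose parts are the blocks into which $\{1,2,\ldots,n\}$ is cut by the positions in $S\setminus J$ (equivalently, the block sizes of the parabolic subgroup $W_J$), and this is a bijection from the objects of $\mathcal{A}$, the subsets of $S$, onto the objects of $L_n$, the compositions of $n$. Since the length-zero alleys $(J;)$ and the length-zero forests (sequences of leaves) are exactly the identity morphisms on their respective objects, this already fixes $G$ on identities. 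Under the dictionary, deleting one element $s\in J$ from $J$ corresponds precisely to splitting a single part of the associated composition into two at the interior position $s$.

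The crux is the length-one case. A forest of length one is a sequence of leaves in which one leaf has been replaced by a single node carrying two leaf children; it is determined by its squash together with the choice of which part is split and where. I would check that this is exactly the information carried by a length-one alley $(J;s)$: the squash corresponds to $J$ and the element $s\in J$ records both the part and the cut, and a short computation shows that the foliage of the forest then corresponds to $J\setminus\{s\}$, so that the resulting bijection between length-one alleys and length-one forests matches sources with sources and targets with targets. With this in hand I define $G$ in general: every alley factors uniquely as a composite $\alpha_1\circ\alpha_2\circ\cdots\circ\alpha_l$ of length-one alleys, and I set $G(a)=G(\alpha_1)\bullet G(\alpha_2)\bullet\cdots\bullet G(\alpha_l)$. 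The consecutive products are defined precisely because of the source/target matching just established---the foliage of $G(\alpha_i)$ and the squash of $G(\alpha_{i+1})$ both correspond to the intermediate object of $\alpha_i\circ\alpha_{i+1}$---and each partial product is again a labeled forest because of the label-incrementing built into $\bullet$. Functoriality of $G$ is then immediate from associativity of $\bullet$, and $G$ is length-preserving because both factorizations have $l$ factors.

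It remains to show that $G$ is fully faithful. Injectivity on a hom-set follows from \autoref{UniqueFactorization}: the identity $G(a)=G(\alpha_1)\bullet\cdots\bullet G(\alpha_l)$ exhibits a factorization of the forest $G(a)$ into forests of length one, hence \emph{the} unique such factorization, so the factors $G(\alpha_i)$, and with them the $\alpha_i$ and the alley $a$, can be read off from $G(a)$. Surjectivity is the same argument in reverse: given $Y\in L_n$ of length $l$, \autoref{UniqueFactorization} writes $Y=Y_1\bullet\cdots\bullet Y_l$ with each $Y_i$ of length one; by the length-one bijection each $Y_i$ equals $G(\alpha_i)$ for a unique length-one alley $\alpha_i$; the relations $\f{Y_i}=\s{Y_{i+1}}$ (which hold because forming a $\bullet$-product leaves the values of the trees unchanged) make $\alpha_1\circ\cdots\circ\alpha_l$ a legitimate alley, and $G$ sends it to $Y$. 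Combined with the object bijection, this exhibits $G$ as an equivalence that preserves length. I expect essentially all of the work to lie in the length-one correspondence and the accompanying bookkeeping of sources and targets that keeps the products in the definition of $G$ defined; once that is settled, the remainder is formal manipulation of the two unique-factorization statements.
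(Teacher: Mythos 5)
Your proposal is correct and follows essentially the same route as the paper: the block-size bijection $\varphi$ on objects, the matching of length-one alleys $\left(J;s\right)$ with length-one forests of squash $\varphi\left(J\right)$ and foliage $\varphi\left(J\setminus\left\{s\right\}\right)$, and \autoref{UniqueFactorization} on both sides to extend this to all morphisms. The paper merely packages the same argument differently, identifying $\mathcal{A}$ and $L_n$ with the path categories of the isomorphic quivers $H_{n-1}$ and $H_n'$ rather than writing out the functor via factorization into length-one pieces.
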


\begin{proof}
We identify the Coxeter generating set $S$ of $\sym{n}$ 
with the set $\left\{1,2,\ldots,n-1\right\}$.
If $J\subseteq S$ with $\left|J\right|=n-j$ then we
write $S\setminus J
=\left\{t_1,t_2,\ldots,t_{j-1}\right\}$
where $t_1<t_2<\cdots<t_{j-1}$.
We put $t_0=0$ and $t_j=n$ and let
$\varphi\left(J\right)$ be the composition $q_1q_2\cdots q_j$
where $q_i=t_i-t_{i-1}$.
Then $\varphi$ is a bijection between the subsets of
$S$ and the compositions of $n$.

Let $H_m$ be the Hasse diagram of the relation $\subseteq$
on the subsets of $\left\{1,2,\ldots,m\right\}$ for
$m\ge 0$. Then $H_m$
is a quiver with a vertex for every subset of $\left\{1,2,\ldots,m\right\}$
and an edge from $J$ to $K$ if $J\subseteq K$ and
$\left|K\setminus J\right|=1$.
Thanks to the description in \autoref{Chain}
we can identify $\mathcal{A}$ with the set of paths in $H_{n-1}$.
Note that under this identification the length of an alley
equals the length of the corresponding path.

Now consider the quiver $H_n'$ which has a vertex for every composition
of $n$ and an edge from $p$ to $q$ if there exists a forest of length one
with foliage $p$ and squash $q$. 
Thanks to \autoref{UniqueFactorization} we can identify $L_n$ with the set
of paths in $H_n'$.
Note that under this identification the length of a forest
equals the length of the corresponding path.

Next we observe that the vertices of $H_{n-1}$ are in bijection
with the vertices of $H_n'$ through $\varphi$ and that $H_{n-1}$
has an edge from
$J$ to $K$ if and only if $H_n'$ has an edge from $\varphi\left(J\right)$
to $\varphi\left(K\right)$. This means that the quivers
$H_{n-1}$ and $H_n'$ are isomorphic as directed graphs so that
$\mathcal{A}$ and $L_n$ are equivalent through a length-preserving
functor, which we denote by $\varphi$ in the following sections.
\end{proof}

For example, the alley
$\left(\left\{1,2,3,4,5,6,7,9,10\right\};3,4,7,1,10\right)$
corresponds with the path
\begin{multline*}
\left\{2,5,6,9\right\}
\to\left\{2,5,6,9,10\right\}
\to\left\{1,2,5,6,9,10\right\}
\to\left\{1,2,5,6,7,9,10\right\}\\
\to\left\{1,2,4,5,6,7,9,10\right\}
\to\left\{1,2,3,4,5,6,7,9,10\right\}
\end{multline*}
in $H_{10}$, which in turn corresponds under $\varphi$ with the path
\[1213121\to 121313\to 31313\to 3143\to 353\to 83\]
in $H'_{11}$ corresponding with the forest shown in \autoref{XYForest}
and factorized in \autoref{XYFactorization}.

\section{Actions and orbits}\label{ActionSection}
If $X=X_1X_2\cdots X_j$ is a labeled forest where $X_1,X_2,\ldots,X_j$
are trees, then the trees $X_1,X_2,\ldots,X_j$
are called the {\em parts} of $X$.
The P\'olya action of $\sym{j}$ on compositions with $j$~parts
extends to an action on forests with $j$~parts.
If $X$ is a forest with $j$~parts,
then we denote the sum of the elements
in the same $\sym{j}$-orbit as $X$ by $\left[X\right]$.
For example, if $X$ is the forest
$\vcenter{\begin{xy}<.2cm,0cm>:
(2,3)="1"*+!U{_1};
"1";(1,2)*+!U{\color{red}1}**\dir{-};
"1";(3,2)*+!U{\color{red}2}**\dir{-};
(5,3)="1"*+!U{_2};
"1";(4,2)*+!U{\color{red}1}**\dir{-};
"1";(6,2)*+!U{\color{red}2}**\dir{-};
(8,3)="1"*+!U{_3};
"1";(7,2)*+!U{\color{red}1}**\dir{-};
"1";(10,2)="2"*+!U{_4}**\dir{-};
"2";(9,1)*+!U{\color{red}1}**\dir{-};
"2";(11,1)*+!U{\color{red}2}**\dir{-};
\end{xy}}$ then
\begin{align*}
\left[X\right]
=&\vcenter{\begin{xy}<.3cm,0cm>:
(2,3)="1"*+!U{_1};
"1";(1,2)*+!U{\color{red}1}**\dir{-};
"1";(3,2)*+!U{\color{red}2}**\dir{-};
(5,3)="1"*+!U{_2};
"1";(4,2)*+!U{\color{red}1}**\dir{-};
"1";(6,2)*+!U{\color{red}2}**\dir{-};
(8,3)="1"*+!U{_3};
"1";(7,2)*+!U{\color{red}1}**\dir{-};
"1";(10,2)="2"*+!U{_4}**\dir{-};
"2";(9,1)*+!U{\color{red}1}**\dir{-};
"2";(11,1)*+!U{\color{red}2}**\dir{-};
\end{xy}}
+\vcenter{\begin{xy}<.3cm,0cm>:
(2,3)="1"*+!U{_2};
"1";(1,2)*+!U{\color{red}1}**\dir{-};
"1";(3,2)*+!U{\color{red}2}**\dir{-};
(5,3)="1"*+!U{_1};
"1";(4,2)*+!U{\color{red}1}**\dir{-};
"1";(6,2)*+!U{\color{red}2}**\dir{-};
(8,3)="1"*+!U{_3};
"1";(7,2)*+!U{\color{red}1}**\dir{-};
"1";(10,2)="2"*+!U{_4}**\dir{-};
"2";(9,1)*+!U{\color{red}1}**\dir{-};
"2";(11,1)*+!U{\color{red}2}**\dir{-};
\end{xy}}
+\vcenter{\begin{xy}<.3cm,0cm>:
(2,3)="1"*+!U{_1};
"1";(1,2)*+!U{\color{red}1}**\dir{-};
"1";(3,2)*+!U{\color{red}2}**\dir{-};
(5,3)="1"*+!U{_3};
"1";(4,2)*+!U{\color{red}1}**\dir{-};
"1";(7,2)="2"*+!U{_4}**\dir{-};
"2";(6,1)*+!U{\color{red}1}**\dir{-};
"2";(8,1)*+!U{\color{red}2}**\dir{-};
(10,3)="1"*+!U{_2};
"1";(9,2)*+!U{\color{red}1}**\dir{-};
"1";(11,2)*+!U{\color{red}2}**\dir{-};
\end{xy}}\\
+&\vcenter{\begin{xy}<.3cm,0cm>:
(2,3)="1"*+!U{_2};
"1";(1,2)*+!U{\color{red}1}**\dir{-};
"1";(3,2)*+!U{\color{red}2}**\dir{-};
(5,3)="1"*+!U{_3};
"1";(4,2)*+!U{\color{red}1}**\dir{-};
"1";(7,2)="2"*+!U{_4}**\dir{-};
"2";(6,1)*+!U{\color{red}1}**\dir{-};
"2";(8,1)*+!U{\color{red}2}**\dir{-};
(10,3)="1"*+!U{_1};
"1";(9,2)*+!U{\color{red}1}**\dir{-};
"1";(11,2)*+!U{\color{red}2}**\dir{-};
\end{xy}}
+\vcenter{\begin{xy}<.3cm,0cm>:
(2,3)="1"*+!U{_3};
"1";(1,2)*+!U{\color{red}1}**\dir{-};
"1";(4,2)="2"*+!U{_4}**\dir{-};
"2";(3,1)*+!U{\color{red}1}**\dir{-};
"2";(5,1)*+!U{\color{red}2}**\dir{-};
(7,3)="1"*+!U{_1};
"1";(6,2)*+!U{\color{red}1}**\dir{-};
"1";(8,2)*+!U{\color{red}2}**\dir{-};
(10,3)="1"*+!U{_2};
"1";(9,2)*+!U{\color{red}1}**\dir{-};
"1";(11,2)*+!U{\color{red}2}**\dir{-};
\end{xy}}
+\vcenter{\begin{xy}<.3cm,0cm>:
(2,3)="1"*+!U{_3};
"1";(1,2)*+!U{\color{red}1}**\dir{-};
"1";(4,2)="2"*+!U{_4}**\dir{-};
"2";(3,1)*+!U{\color{red}1}**\dir{-};
"2";(5,1)*+!U{\color{red}2}**\dir{-};
(7,3)="1"*+!U{_2};
"1";(6,2)*+!U{\color{red}1}**\dir{-};
"1";(8,2)*+!U{\color{red}2}**\dir{-};
(10,3)="1"*+!U{_1};
"1";(9,2)*+!U{\color{red}1}**\dir{-};
"1";(11,2)*+!U{\color{red}2}**\dir{-};
\end{xy}}.
\end{align*}
The set of orbit sums in $kL_n$ is denoted by $\mathcal{L}_n$.

Suppose that $X,Y\in L_n$ are such that $\underline{X}=\overline{Y}$.
If $X$ has $i$~parts and $Y$ has $j$~parts,
then any element $\sigma\in\sym{i}$ induces
a permutation $\tau\in\sym{j}$ of the leaves of $X$. Namely, $\tau$
is the permutation satisfying
$X.\sigma\bullet Y.\tau=\left(X\bullet Y\right).\sigma$.
This correspondence is an injective homomorphism when restricted to
any subgroup of $\sym{i}$ that permutes only parts of $X$ that
have the same number of leaves.
The stabilizer of $X$ in $\sym{i}$ is such a subgroup,
since it permutes only parts of length zero,
the parts of positive length having distinct node labels.
Therefore the stabilizer of $X$ is isomorphic to a subgroup $K$ of $\sym{j}$.
Now if $H$ is the stabilizer of $Y$ in $\sym{j}$ then
\[\left[X\right]\bullet\left[Y\right]
=\sum_{t=1}^m\left[X\bullet Y.\sigma_t\right]\in k\mathcal{L}_n\]
where $\sigma_1,\sigma_2,\ldots,\sigma_m$ are representatives
of the double cosets of $H,K$ in $\sym{j}$.
This proves the following proposition.
\begin{proposition}\label{LSubalgebra}
$k\mathcal{L}_n$ is a subalgebra of $kL_n$.
\end{proposition}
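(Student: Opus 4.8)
The plan is to prove the proposition by showing that the product of two orbit sums is again a $k$-linear combination of orbit sums; this suffices, since $k\mathcal{L}_n$ is by definition the $k$-span of the orbit sums $[Z]$ with $Z\in L_n$ and is visibly a subspace of $kL_n$ (it also contains the identity $\sum_q e_q$ of $kL_n$, where $e_q$ is the length-zero forest with foliage and squash $q$). So fix $X,Y\in L_n$ with $i$ and $j$ parts respectively and expand
\[
[X]\bullet[Y]=\sum_{X'}\sum_{Y'}X'\bullet Y',
\]
where $X'$ runs over the $\sym{i}$-orbit of $X$, $Y'$ over the $\sym{j}$-orbit of $Y$, and a summand is zero unless $\f{X'}=\s{Y'}$. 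I would argue that the multiset of nonzero summands here is a disjoint union, with multiplicities, of full $\sym{\cdot}$-orbits.

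The key ingredient is the induced-permutation correspondence recalled before the statement: when $\f{X}=\s{Y}$, each $\sigma\in\sym{i}$ determines a $\tau\in\sym{j}$ with $(X\bullet Y).\sigma=X.\sigma\bullet Y.\tau$, obtained by noting that permuting the $i$ parts of $X$ shuffles the $j$ leaves of $X$ in blocks. First I would check this is well defined and that $\f{X.\sigma}=\s{Y.\tau}$ automatically, so $X.\sigma\bullet Y.\tau$ is again nonzero. Applying it with $X',Y'$ in place of $X,Y$: if $Z=X'\bullet Y'\neq 0$ then $Z$ has $i$ parts and, for every $\rho\in\sym{i}$, one has $Z.\rho=X'.\rho\bullet Y'.\tau_\rho$, which is once more a member of the orbit of $X$ bulleted with a member of the orbit of $Y$, and nonzero. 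Hence the multiset of nonzero summands is $\sym{i}$-stable, i.e. a union of orbits, and $[X]\bullet[Y]$ is a nonnegative-integer combination of orbit sums. This already proves the proposition; it is essentially the computation recorded in the paragraph preceding the statement.

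To extract the sharper formula $[X]\bullet[Y]=\sum_{t}[X\bullet Y.\sigma_t]$ I would specialize to the case $\f{X}=\s{Y}$ and track multiplicities through double cosets: the stabilizer of $X$ in $\sym{i}$ permutes only length-zero parts of $X$, since the positive-length parts carry distinct node labels and are therefore fixed, so $\sigma\mapsto\tau$ restricts to an injective homomorphism of $\mathrm{Stab}_{\sym{i}}(X)$ onto a subgroup $K\le\sym{j}$; writing $H=\mathrm{Stab}_{\sym{j}}(Y)$, expanding $[X]$ and $[Y]$ over transversals of their stabilizers and collecting $\sym{i}$-orbits yields one orbit sum per $(H,K)$-double coset in $\sym{j}$. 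I expect the main obstacle to be exactly this last bookkeeping: pinning down the well-definedness and injectivity of $\sigma\mapsto\tau$ on the stabilizer, and verifying that orbits coming from distinct double cosets are counted with the right multiplicity given that the orbits may have nontrivial stabilizers. Everything else is formal.
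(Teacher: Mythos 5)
Your proposal is correct and follows essentially the same route as the paper: the computation displayed immediately before the proposition (the induced permutation $\sigma\mapsto\tau$, the observation that the stabilizer of $X$ permutes only length-zero parts, and the resulting expansion of $\left[X\right]\bullet\left[Y\right]$ over double-coset representatives) is precisely the paper's proof. Your preliminary orbit-stability argument is just a slightly weaker packaging of that same computation, and the double-coset bookkeeping you defer to the end is exactly what the paper records.
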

Alternately, \autoref{LSubalgebra} follows with \autoref{GotzSummary} from
\autoref{OrbitsCorrespond} below
through the equivalence of $L_n$ with $\mathcal{A}$.

Recall from \autoref{IntroductionSection} that
the free monoid $S^\ast$ acts on $\mathcal{A}$ by
\begin{equation*}
\left(J;s_1,s_2,\ldots,s_l\right).t 
=\left(J^\omega;s_1^\omega,s_2^\omega,\ldots,s_l^\omega\right)
\end{equation*}
for $t\in S$ and $\left(J;s_1,s_2,\ldots,s_l\right)\in\mathcal{A}$ where 
$\omega=w_Jw_{J\cup\left\{t\right\}}$ and where $w_J$ and 
$w_{J\cup\left\{t\right\}}$ are the longest elements of the parabolic subgroups
$W_J$ and $W_{J\cup\left\{t\right\}}$ respectively.
When $\mathcal{A}$ is the category associated with $\Sigma\left(\sym{n}\right)$
we calculate the orbits of this action in the following proposition.

\begin{proposition}\label{OrbitsCorrespond}
The orbits of the P\'olya action on $L_n$
correspond under the equivalence $\varphi$ in 
\autoref{Equivalence} with the $S^\ast$-orbits on $\mathcal{A}$.
\end{proposition}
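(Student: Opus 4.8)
The plan is to establish the correspondence by tracking what the $S^\ast$-action does to an alley $(J;s_1,\ldots,s_l)$ under the bijection $\varphi$ between subsets of $S$ and compositions of $n$, and showing it matches the P\'olya action on forests. First I would reduce to understanding the action of a single generator $t \in S$ on an alley $(J;s_1,\ldots,s_l)$, since $S^\ast$-orbits are generated by iterating these. The element $\omega = w_J w_{J\cup\{t\}}$ is the longest element of $W_J$ times the longest element of $W_{J\cup\{t\}}$; conjugation by $\omega$ permutes $S$, and the key observation is that when $t \notin J$, the parabolic $W_{J\cup\{t\}}$ merges two adjacent blocks of the composition $\varphi(J)$ into one, and $\omega$ acts by the "order-reversing within the merged block relative to its complement" permutation. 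Under $\varphi$, this corresponds precisely to swapping the positions of two adjacent parts of the composition — that is, to the P\'olya action of an adjacent transposition in $\sym{j}$. When $t \in J$, one checks $\omega = 1$ and the action is trivial, which is consistent since $\varphi(J)$ is unchanged.

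Next I would verify that this swapping of adjacent parts of the composition lifts correctly to the level of forests, not just compositions. An alley corresponds to a path in $H_{n-1}$, hence under $\varphi$ to a path in $H'_n$, hence by \autoref{UniqueFactorization} to a forest in $L_n$. Applying $\omega$ conjugates every set in the chain \autoref{Chain} simultaneously, so it permutes the entire path coherently; I would show that the induced operation on the corresponding forest is exactly the P\'olya action permuting its parts (its trees). The essential point here is that conjugation by $\omega$ respects the tree structure: two adjacent blocks of $\varphi(J)$ that get permuted correspond to a contiguous run of trees in the forest whose total value equals the size of the merged block, and $\omega$ rigidly transposes the positions of the subpath segments building those trees. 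So the forest obtained from $(J;s_1,\ldots,s_l).t$ is the forest of $(J;s_1,\ldots,s_l)$ with two adjacent parts swapped. Since adjacent transpositions generate $\sym{j}$, iterating shows the $S^\ast$-orbit of an alley maps onto the full P\'olya orbit of its forest, and conversely every forest in a P\'olya orbit arises this way.

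I would then conclude: $\varphi$ is a length-preserving bijection on objects and arrows (by \autoref{Equivalence}), and the $S^\ast$-orbit of any alley $a$ maps under $\varphi$ exactly onto the $\sym{j}$-orbit of $\varphi(a)$, where $j$ is the number of parts. This gives the claimed orbit correspondence. As a bonus it recovers \autoref{LSubalgebra} via \autoref{GotzSummary}, since $k\mathcal{X}$ is a subalgebra and $\mathcal{X}$ corresponds to the set of P\'olya orbit sums $\mathcal{L}_n$.

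The main obstacle I expect is the precise combinatorial identification of conjugation by $\omega = w_J w_{J\cup\{t\}}$ with an adjacent-part transposition at the level of forests rather than merely compositions. Computing $\omega$ explicitly as a permutation of $\{1,\ldots,n\}$ and checking that the conjugated generators $s_i^\omega$ reassemble into the chain for the swapped forest requires care about how the longest elements of parabolics act on Coxeter generators; one must confirm that $\omega$ restricts to a relabeling of $S$ that, through $\varphi$, is just the relabeling coming from permuting two adjacent parts — and that this relabeling is compatible with which parts belong to which trees in the forest. Handling the case where the merged block spans parts of differing tree-values, and verifying that no spurious sign or reordering of the distinguished generators $s_1,\ldots,s_l$ occurs, is where the bookkeeping is heaviest.
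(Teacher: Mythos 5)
Your proposal follows essentially the same route as the paper's proof: reduce to a single generator $t=t_i\notin J$, show that conjugation by $\omega=w_Jw_{J\cup\{t_i\}}$ merges and re-reverses the blocks between consecutive elements of $S\setminus J$ so that under $\varphi$ it swaps the adjacent parts $i$ and $i+1$, then apply this simultaneously to every subset $K$ in the chain (using that $\varphi(K)$ refines $\varphi(J)$) to see the whole path, hence the forest, undergoes exactly the P\'olya exchange of parts, and conclude since adjacent transpositions generate $\sym{j}$. The "main obstacle" you flag is resolved in the paper precisely by this block-reversal and refinement bookkeeping, so your plan is correct and matches the published argument.
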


\begin{proof}
Let $a=\left(J;s_1,s_2,\ldots,s_l\right)\in\mathcal{A}$ and let
$X=\varphi\left(a\right)\in L_n$.
Let $t_0,t_1,\ldots,t_j$ be as in the proof of \autoref{Equivalence}.
Note that if $t\in J$ then $\omega=w_Jw_{J\cup\left\{t\right\}}=\mathsf{id}_W$
so that $a.t=a$.
Otherwise assume that $t=t_i$ for some $1\le i\le j-1$.
We claim that $\varphi\left(a.t_i\right)$
is obtained from $X$ by exchanging the parts 
in positions $i$ and $i+1$.
From this it will follow that
$\varphi\left(a.S^\ast\right)
=\varphi\left(a\right).\sym{j}$.

It is easy to see that conjugation by $w_J$ reverses the elements in the block 
\[B_g=\left\{t_g+1,t_g+2,\ldots,t_{g+1}-1\right\}\]
for all $0\le g\le j-1$.
Note that including $t_i$ in $J$ joins the blocks $B_{i-1}$ and $B_i$
into the block $B_{i-1}\cup\left\{t_i\right\}\cup B_i$.
Then since conjugation by $w_{J\cup\left\{t_i\right\}}$
again reverses all the blocks,
the effect of conjugation by $\omega$
is to shift $B_{i-1}$ to the right of $B_i$
while fixing the remaining blocks.

It follows from the definition of $\varphi$
that if $K\subseteq J$ then $\varphi\left(K\right)$
is a refinement of $\varphi\left(J\right)$. In other words,
if $\varphi\left(J\right)=q_1q_2\cdots q_j$ where
$q_1,q_2,\ldots,q_j\in\mathbb{N}$ then
$\varphi\left(K\right)=p_1p_2\cdots p_j$ where $p_i$
is a composition of $q_i$ for all $1\le i\le j$.
Then conjugating $K$ by $\omega$ corresponds under $\varphi$
with exchanging the compositions $p_i$ and $p_{i+1}$ of $\varphi\left(K\right)$.
Applying this observation to each vertex $K$ of the path
corresponding with $a$ in $H_{n-1}$ we see that
the path in $H_n'$ corresponding with $\varphi\left(a.t_i\right)$
is obtained from the path corresponding with
$X=\varphi\left(a\right)$ by exchanging the compositions
$p_i$ and $p_{i+1}$ of each vertex $\varphi\left(K\right)$. Therefore
$\varphi\left(a.t_i\right)$ is obtained from $X$ by exchanging
the parts in positions $i$ and $i+1$.
\end{proof}

\section{Difference operators}\label{DeltaSection}
In this section we prove 
one of the main results of this paper, namely that
$\Sigma\left(\sym{n}\right)$ is isomorphic to
a quotient of $k\mathcal{L}_n$.
For this purpose we define a difference operator $\delta$ on $kL_n$ as follows.
If $\ell\left(X\right)=0$ then we define $\delta\left(X\right)=X$.
Otherwise suppose that $X=X_1X_2\cdots X_j\in L_n$ where $X_1,X_2,\ldots,X_j$
are trees and that $X_i$ is the node of $X$ labeled $1$.
Then $X_i=\vcenter{\begin{xy}<.3cm,0cm>:
(2,2)="1"*+!U{_1};
"1";(1,1)*+!U{X_{i1}}**\dir{-};
"1";(3,1)*+!U{X_{i2}}**\dir{-};
\end{xy}}$ for some trees $X_{i1}$ and $X_{i2}$.
We define $\delta\left(X\right)$ to be the
element of $kL_n$ obtained from $X$ by replacing $X_i$
with the Lie bracket $X_{i1}X_{i2}-X_{i2}X_{i1}$
and reducing the remaining node labels by one.
In terms of the P\'olya action,
this means that $\delta\left(X\right)=Y-Y.i$
where $Y$ is the forest obtained from $X$ by
splitting the part
$\vcenter{\begin{xy}<.3cm,0cm>:
(2,2)="1"*+!U{_1};
"1";(1,1)*+!U{X_{i1}}**\dir{-};
"1";(3,1)*+!U{X_{i2}}**\dir{-};
\end{xy}}$ in position $i$
into $X_{i1}X_{i2}$
and reducing the remaining node labels by one.

Recall from \autoref{IntroductionSection} that
the difference operator $\delta$
on $k\mathcal{A}$ is defined
by $\delta\left(a\right)=a$ if $l=0$
or by $\delta\left(a\right)=b-b.s_1$ if $l>0$ 
for all $a=\left(J;s_1,s_2,\ldots,s_l\right)$ 
where $b=\left(J\setminus\left\{s_1\right\};s_2,\ldots,s_l\right)$.
When $\mathcal{A}$ is the category
associated with $\Sigma\left(\sym{n}\right)$
this difference operator coincides with the one introduced above
in the following sense.

\begin{proposition}\label{DeltaCoincides}
$\varphi\left(\delta\left(a\right)\right)
=\delta\left(\varphi\left(a\right)\right)$
for all alleys $a\in\mathcal{A}$.
\end{proposition}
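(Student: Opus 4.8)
The plan is to unwind both difference operators against the explicit description of $\varphi$ and the factorization of \autoref{UniqueFactorization}. The case $\ell\left(a\right)=0$ is trivial: since $\varphi$ is length-preserving, $\varphi\left(a\right)$ has length zero, so both $\varphi\left(\delta\left(a\right)\right)$ and $\delta\left(\varphi\left(a\right)\right)$ equal $\varphi\left(a\right)$. Assume therefore that $a=\left(J;s_1,s_2,\ldots,s_l\right)$ with $l>0$ and put $b=\left(J\setminus\left\{s_1\right\};s_2,\ldots,s_l\right)$, so that $\varphi\left(\delta\left(a\right)\right)=\varphi\left(b\right)-\varphi\left(b.s_1\right)$; we must match this with $\delta\left(\varphi\left(a\right)\right)=Y-Y.i$, where $i$ is the position of the node labeled $1$ of $\varphi\left(a\right)$ and $Y$ is obtained from $\varphi\left(a\right)$ by splitting that node and decrementing the remaining labels.

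First I would identify $\varphi\left(b\right)$ with $Y$. Regarding $a$ as a path in $H_{n-1}$ as in the proof of \autoref{Equivalence}, the chain of $b$ is the chain of $a$ with the subset $J$ removed, so under $\varphi$ the path in $H_n'$ corresponding to $\varphi\left(b\right)$ is the path corresponding to $\varphi\left(a\right)$ with its last edge deleted. Under the identification of $L_n$ with paths supplied by \autoref{UniqueFactorization}, deleting the last edge of a path corresponds to dropping the first factor in the unique $\bullet$-factorization into length-one forests. Since that first factor is precisely the length-one forest carrying the node labeled $1$ of $\varphi\left(a\right)$, dropping it leaves the forest obtained from $\varphi\left(a\right)$ by splitting that node and decrementing the remaining labels, that is, $Y$. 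Hence $\varphi\left(b\right)=Y$.

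Next I would match the remaining terms $\varphi\left(b.s_1\right)$ and $Y.i$. Because $s_1\in J$ but $s_1\notin J\setminus\left\{s_1\right\}$, the generator $s_1$ is one of the complement elements of $J\setminus\left\{s_1\right\}$, say the $m$-th one, so by \autoref{OrbitsCorrespond} the forest $\varphi\left(b.s_1\right)$ is obtained from $\varphi\left(b\right)$ by exchanging the parts in positions $m$ and $m+1$. On the other hand, unwinding the definition of $\varphi$, removing $s_1$ from $J$ splits exactly the $m$-th part of the composition $\varphi\left(J\right)$ into two; therefore the length-one forest attached to the last edge of the path corresponding to $\varphi\left(a\right)$---equivalently, the first factor of the $\bullet$-factorization, which by \autoref{UniqueFactorization} carries the node labeled $1$---has its node in position $m$. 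Thus $i=m$, and $\varphi\left(b.s_1\right)=\varphi\left(b\right).m=Y.i$. Combining this with the previous paragraph gives $\delta\left(\varphi\left(a\right)\right)=Y-Y.i=\varphi\left(b\right)-\varphi\left(b.s_1\right)=\varphi\left(\delta\left(a\right)\right)$.

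The hard part will be the position bookkeeping in the third step: one has to verify that the node labeled $1$ of $\varphi\left(a\right)$ and the ``new'' complement element $s_1$ of $J\setminus\left\{s_1\right\}$ occupy corresponding positions, and that this position is exactly the one at which the difference operator on forests performs its transposition. Everything else amounts to a routine chase through the definitions of $\varphi$, of the $\bullet$-product, and of the two operators $\delta$.
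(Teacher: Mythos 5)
Your proposal is correct and follows essentially the same route as the paper: identifying $\varphi\left(b\right)$ with $Y$ via the unique factorization of \autoref{UniqueFactorization} (your path-deletion picture is just that factorization read in $H_n'$), and then matching $\varphi\left(b.s_1\right)$ with $Y.i$ by locating $s_1$ in the sorted complement of $J\setminus\left\{s_1\right\}$ and invoking the proof of \autoref{OrbitsCorrespond}. The position bookkeeping you flag as the hard part is exactly the step the paper handles (somewhat implicitly) by writing the complement as $t_1<\cdots<t_{i-1}<s_1<t_i<\cdots<t_{j-1}$, and your explicit verification of $i=m$ is correct.
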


\begin{proof} 
Let $a$ and $b$ be as above
and let $X=X_1X_2\cdots X_j=\varphi\left(a\right)\in L_n$
where $X_1,X_2,\ldots,X_j$ are trees.
The factorization
$a=\left(J;s_1\right)\circ b$
and the factorization $X=X'\bullet Y$ in \autoref{UniqueFactorization}
imply that $\varphi\left(J;s_1\right)=X'$ and $\varphi\left(b\right)=Y$
by unique factorization and length-preserving equivalence.

Now let $t_1,\ldots,t_{j-1}$ be as in \autoref{Equivalence}. Then
\[\left\{1,2,\ldots,n-1\right\}\setminus
\left(J\setminus\left\{s_1\right\}\right)
=\left\{t_1,t_2,\ldots,t_{i-1},s_1,t_i,t_{i+1},\ldots t_{j-1}\right\}\]
with $t_1<t_2<\cdots<t_{i-1}<s_1<t_i<t_{i+1}<\cdots<t_{j-1}$.
Since $s_1$ is in position~$i$ of this list,
$\varphi\left(b.s_1\right)$ is obtained from
$\varphi\left(b\right)$ by exchanging the trees
in positions $i$ and $i+1$ by the proof of \autoref{OrbitsCorrespond}.
Thus $\delta\left(X\right)=Y-Y.i=
\varphi\left(b-b.s_1\right)
=\varphi\left(\delta\left(a\right)\right)$.
\end{proof}

Iterating $\delta$ as many times as possible determines another
difference operator $\Delta$ on $kL_n$ defined by
$\Delta\left(X\right)=\delta^{\ell\left(X\right)}\left(X\right)$
for all forests $X$. This is analogous to the difference operator
$\Delta$ on $k\mathcal{A}$ defined by
$\Delta\left(a\right)=\delta^{\ell\left(a\right)}\left(a\right)$
for all alleys $a$.
Note that applying $\Delta$ to $X\in L_n$
results in a $\mathbb{Z}$-linear combination of compositions of $n$.

\begin{theorem}\label{LIsomorphism}
$\Sigma\left(\sym{n}\right)^\mathsf{op}$ is isomorphic to
$k\mathcal{L}_n/\ker\Delta$.
\end{theorem}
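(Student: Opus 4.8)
The plan is to transport Pfeiffer's result (\autoref{GotzSummary}) across the equivalence $\varphi$ of \autoref{Equivalence}. By that theorem, applied to $W=\sym{n}$, the pair $(Q,\ker\Delta)$ is a quiver presentation of $\Sigma(\sym{n})^{\mathrm{op}}$, where the relevant data live on the category $\mathcal{A}$; equivalently, $\Sigma(\sym{n})^{\mathrm{op}}$ is isomorphic to $k\mathcal{X}/\ker\Delta$, since $k\mathcal{X}$ is the subalgebra of $k\mathcal{A}$ spanned by the $S^\ast$-orbit sums and the quiver presentation is exactly the presentation of this algebra modulo the kernel of $\Delta$ (the conditions on $\Lambda$ and on $\lambda(k\mathcal{X})\lambda'$ guarantee that $k\mathcal{X}$ is basic with vertex set $\Lambda$). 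So the task reduces to producing an algebra isomorphism $k\mathcal{X}\to k\mathcal{L}_n$ carrying $\ker\Delta$ (the $\mathcal{A}$-version) onto $\ker\Delta$ (the $L_n$-version defined in \autoref{DeltaSection}).

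First I would upgrade the equivalence of categories $\varphi:\mathcal{A}\to L_n$ of \autoref{Equivalence} to an algebra isomorphism $k\mathcal{A}\to kL_n$: since $\varphi$ is a length-preserving functor that is bijective on objects (compositions of $n$ versus subsets of $S$ via the bijection $\varphi$ on vertices) and on morphisms, it induces a linear bijection on the free vector spaces that respects the partial products, hence an isomorphism of the path-style algebras $kL_n\cong k\mathcal{A}$. Next I would check that this isomorphism restricts to a bijection between the $S^\ast$-orbit sums $\mathcal{X}$ in $k\mathcal{A}$ and the P\'olya-orbit sums $\mathcal{L}_n$ in $kL_n$: this is precisely \autoref{OrbitsCorrespond}, which says the $S^\ast$-orbits on $\mathcal{A}$ correspond under $\varphi$ with the P\'olya orbits on $L_n$. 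Combined with \autoref{LSubalgebra} (that $k\mathcal{L}_n$ is a subalgebra), we get an algebra isomorphism $k\mathcal{X}\cong k\mathcal{L}_n$.

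Then I would show this isomorphism intertwines the two $\Delta$ maps. The single-step statement is \autoref{DeltaCoincides}: $\varphi(\delta(a))=\delta(\varphi(a))$ for all $a\in\mathcal{A}$. Iterating, $\varphi(\Delta(a))=\varphi(\delta^{\ell(a)}(a))=\delta^{\ell(a)}(\varphi(a))=\Delta(\varphi(a))$, using that $\varphi$ preserves length and is linear. Hence $\varphi$ maps $\ker(\Delta|_{k\mathcal{A}})$ onto $\ker(\Delta|_{kL_n})$, and in particular $\varphi(\ker\Delta\cap k\mathcal{X})=\ker\Delta\cap k\mathcal{L}_n$. (One should note here that $\ker\Delta$ in \autoref{GotzSummary} is implicitly the kernel of $\Delta$ restricted to $k\mathcal{X}$, viewed inside the quiver algebra $kQ$; the identification of $k\mathcal{X}$ with $kQ/(\text{relations})$ is part of what that theorem provides, so modding out $k\mathcal{X}$ by $\ker\Delta$ already yields $\Sigma(\sym{n})^{\mathrm{op}}$.) Putting the pieces together: $\Sigma(\sym{n})^{\mathrm{op}}\cong k\mathcal{X}/(\ker\Delta\cap k\mathcal{X})\xrightarrow{\ \varphi\ } k\mathcal{L}_n/(\ker\Delta\cap k\mathcal{L}_n)=k\mathcal{L}_n/\ker\Delta$.

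The main obstacle is bookkeeping rather than mathematics: one must be careful about what exactly "$k\mathcal{L}_n/\ker\Delta$" means — namely that $\ker\Delta$ is taken as an ideal of the \emph{subalgebra} $k\mathcal{L}_n$, not of $kL_n$, and that $\Delta$ restricted to $k\mathcal{L}_n$ really does land in the span of partitions (the orbit sums of compositions), so the quotient is well-defined as an algebra. One also has to confirm that $\ker\Delta$ is a two-sided ideal of $k\mathcal{L}_n$; this follows from \autoref{GotzSummary}, where $\ker\Delta$ is exhibited as the relation ideal of a quiver presentation, transported back along the isomorphism just constructed. Everything else is a direct transport of structure along $\varphi$ using the three propositions already proved.
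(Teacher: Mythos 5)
Your proposal is correct and follows essentially the same route as the paper's own proof: it combines \autoref{GotzSummary} (giving $\Sigma(\sym{n})^{\mathrm{op}}\cong k\mathcal{X}/\ker\Delta$), \autoref{OrbitsCorrespond} (identifying $k\mathcal{X}$ with $k\mathcal{L}_n$ via $\varphi$), and \autoref{DeltaCoincides} (showing the two $\Delta$ maps agree under $\varphi$), which is exactly the paper's three-step argument, just with more bookkeeping spelled out.
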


\begin{proof}
$k\mathcal{X}/\ker\Delta$ is isomorphic to
$\Sigma\left(\sym{n}\right)^\mathsf{op}$
by \autoref{GotzSummary} and
$k\mathcal{X}$ is isomorphic to $k\mathcal{L}_n$ by
\autoref{OrbitsCorrespond}. Then $k\mathcal{X}/\ker\Delta$
is isomorphic to $k\mathcal{L}_n/\ker\Delta$
since the maps $\Delta$ on the two algebras coincide under $\varphi$
by \autoref{DeltaCoincides}.
\end{proof}

\autoref{LIsomorphism} gives a new construction of
$\Sigma\left(\sym{n}\right)$ as a quotient of $k\mathcal{L}_n$.
We show in the following sections that $k\mathcal{L}_n$ in turn
is a homomorphic image of the path algebra of a quiver.

\section{The quiver}\label{QuiverSection}
Recall from \autoref{UniqueFactorization}
that a labeled forest of length at least one can be uniquely factorized as a
product of forests of length one.
This property fails when we replace
$L_n$ with $\mathcal{L}_n$.
For example, if we try to factorize
$\left[\vcenter{\begin{xy}<.25cm,0cm>:
(2,3)="1"*+!U{_1};
"1";(1,2)*+!U{\color{red}1}**\dir{-};
"1";(4,2)="2"*+!U{_2}**\dir{-};
"2";(3,1)*+!U{\color{red}1}**\dir{-};
"2";(5,1)*+!U{\color{red}2}**\dir{-};
(6,3)*+!U{\color{red}3};
\end{xy}}\right]$
as the product of
$\left[\vcenter{\begin{xy}<.25cm,0cm>:
(2,2)="1"*+!U{_1};
"1";(1,1)*+!U{\color{red}1}**\dir{-};
"1";(3,1)*+!U{\color{red}3}**\dir{-};
(4,2)*+!U{\color{red}3};
\end{xy}}\right]$ and
$\left[\vcenter{\begin{xy}<.25cm,0cm>:
(2,2)="1"*+!U{_1};
"1";(1,1)*+!U{\color{red}1}**\dir{-};
"1";(3,1)*+!U{\color{red}2}**\dir{-};
(4,2)*+!U{\color{red}12};
\end{xy}}\right]$
we find that the product
\[\left[\vcenter{\begin{xy}<.3cm,0cm>:
(2,2)="1"*+!U{_1};
"1";(1,1)*+!U{\color{red}1}**\dir{-};
"1";(3,1)*+!U{\color{red}3}**\dir{-};
(4,2)*+!U{\color{red}3};
\end{xy}}\right]\bullet
\left[\vcenter{\begin{xy}<.3cm,0cm>:
(2,2)="1"*+!U{_1};
"1";(1,1)*+!U{\color{red}1}**\dir{-};
"1";(3,1)*+!U{\color{red}2}**\dir{-};
(4,2)*+!U{\color{red}12};
\end{xy}}\right]
=\left[\vcenter{\begin{xy}<.3cm,0cm>:
(2,3)="1"*+!U{_1};
"1";(1,2)*+!U{\color{red}1}**\dir{-};
"1";(4,2)="2"*+!U{_2}**\dir{-};
"2";(3,1)*+!U{\color{red}1}**\dir{-};
"2";(5,1)*+!U{\color{red}2}**\dir{-};
(6,3)*+!U{\color{red}3};
\end{xy}}\right]
+\left[\vcenter{\begin{xy}<.3cm,0cm>:
(2,2)="1"*+!U{_1};
"1";(1,1)*+!U{\color{red}1}**\dir{-};
"1";(3,1)*+!U{\color{red}3}**\dir{-};
(5,2)="1"*+!U{_2};
"1";(4,1)*+!U{\color{red}1}**\dir{-};
"1";(6,1)*+!U{\color{red}2}**\dir{-};
\end{xy}}\right]\]
has an extra term. This defect in factorization
is the subject of \autoref{FactorizationSection}.

Nonetheless, the success of factorization in $L_n$
suggests representing the algebra
$k\mathcal{L}_n$ as a path algebra.
Namely, in the factorization of any labeled forest,
the foliage of each factor
equals the squash of the following factor, so
we can regard each factor
as an edge from its foliage to its squash.

Let $Q_n$ be the quiver having the partitions of $n$
as vertices and an edge from the vertex $p$ to the vertex $q$
whenever $q$ can be obtained from $p$ by replacing two {\em distinct} parts
with their sum. In other words, $Q_n$ is the Hasse diagram
of the partitions of $n$ under restricted partition refinement,
in contrast with the Hasse diagram $H'_n$
of the {\em compositions} of $n$ under {\em ordinary} partition refinement
introduced in the proof of \autoref{Equivalence}.
For example, the quiver $Q_8$
is shown in \autoref{Q8}, omitting
the vertices $11111111$ and $2222$, which are
not incident with any edges.
This quiver also appears in \cite[p. 54]{bauer}.

\begin{figure}
\caption{The quiver $Q_8$}\label{Q8}
\begin{tikzpicture}[>=latex,join=bevel,scale=.60]
  \pgfsetlinewidth{1bp}
  \pgfsetcolor{black}
  \draw [->] (264bp,74bp) .. controls (278bp,66bp) and (298bp,53bp)  .. (324bp,37bp);
  \draw (295bp,61bp) node {$_{e}$};
  \draw [->] (369bp,234bp) .. controls (380bp,234bp) and (393bp,234bp)  .. (416bp,234bp);
  \draw (392bp,240bp) node {$_{h}$};
  \draw [->] (271bp,18bp) .. controls (280bp,18bp) and (289bp,19bp)  .. (298bp,19bp) .. controls (301bp,19bp) and (303bp,20bp)  .. (316bp,21bp);
  \draw (295bp,25bp) node {$_{f}$};
  \draw [->] (172bp,86bp) .. controls (183bp,86bp) and (196bp,86bp)  .. (218bp,86bp);
  \draw (195bp,91bp) node {$_{a}$};
  \draw [->] (553bp,64bp) .. controls (558bp,68bp) and (562bp,72bp)  .. (566bp,77bp) .. controls (585bp,98bp) and (605bp,123bp)  .. (625bp,149bp);
  \draw (588bp,108bp) node {$_{z}$};
  \draw [->] (66bp,18bp) .. controls (77bp,18bp) and (90bp,18bp)  .. (112bp,18bp);
  \draw (89bp,24bp) node {$_{}$};
  \draw [->] (358bp,147bp) .. controls (368bp,157bp) and (383bp,172bp)  .. (396bp,185bp) .. controls (404bp,193bp) and (413bp,202bp)  .. (428bp,218bp);
  \draw (392bp,188bp) node {$_{j}$};
  \draw [->] (172bp,18bp) .. controls (183bp,18bp) and (196bp,18bp)  .. (218bp,18bp);
  \draw (195bp,24bp) node {$_{}$};
  \draw [->] (273bp,84bp) .. controls (284bp,83bp) and (296bp,82bp)  .. (318bp,81bp);
  \draw (295bp,89bp) node {$_{d}$};
  \draw [->] (368bp,26bp) .. controls (379bp,26bp) and (392bp,27bp)  .. (415bp,28bp);
  \draw (392bp,34bp) node {$_{n}$};
  \draw [->] (553bp,252bp) .. controls (570bp,235bp) and (597bp,206bp)  .. (624bp,179bp);
  \draw (588bp,226bp) node {$_{x}$};
  \draw [->] (459bp,109bp) .. controls (474bp,120bp) and (496bp,135bp)  .. (522bp,153bp);
  \draw (490bp,137bp) node {$_{t}$};
  \draw [->] (467bp,27bp) .. controls (476bp,28bp) and (486bp,28bp)  .. (494bp,30bp) .. controls (500bp,31bp) and (505bp,33bp)  .. (520bp,39bp);
  \draw (490bp,35bp) node {$_{w}$};
  \draw [->] (358bp,119bp) .. controls (362bp,115bp) and (366bp,110bp)  .. (370bp,106bp) .. controls (388bp,87bp) and (407bp,66bp)  .. (428bp,43bp);
  \draw (392bp,91bp) node {$_{l}$};
  \draw [->] (272bp,235bp) .. controls (283bp,235bp) and (295bp,235bp)  .. (317bp,235bp);
  \draw (295bp,242bp) node {$_{b}$};
  \draw [->] (464bp,226bp) .. controls (474bp,222bp) and (485bp,217bp)  .. (494bp,211bp) .. controls (504bp,204bp) and (513bp,195bp)  .. (527bp,179bp);
  \draw (490bp,219bp) node {$_{q}$};
  \draw [->] (465bp,287bp) .. controls (477bp,284bp) and (492bp,279bp)  .. (515bp,273bp);
  \draw (490bp,286bp) node {$_{o}$};
  \draw [->] (268bp,97bp) .. controls (282bp,104bp) and (299bp,112bp)  .. (323bp,123bp);
  \draw (295bp,115bp) node {$_{c}$};
  \draw [->] (361bp,223bp) .. controls (375bp,214bp) and (396bp,200bp)  .. (422bp,184bp);
  \draw (392bp,211bp) node {$_{i}$};
  \draw [->] (363bp,247bp) .. controls (377bp,255bp) and (397bp,268bp)  .. (422bp,283bp);
  \draw (392bp,272bp) node {$_{g}$};
  \draw [->] (458bp,220bp) .. controls (470bp,209bp) and (486bp,194bp)  .. (494bp,177bp) .. controls (515bp,136bp) and (492bp,117bp)  .. (512bp,77bp) .. controls (513bp,75bp) and (514bp,73bp)  .. (522bp,63bp);
  \draw (490bp,195bp) node {$_{r}$};
  \draw [->] (461bp,87bp) .. controls (474bp,80bp) and (492bp,72bp)  .. (517bp,60bp);
  \draw (490bp,79bp) node {$_{v}$};
  \draw [->] (365bp,125bp) .. controls (378bp,120bp) and (394bp,114bp)  .. (418bp,105bp);
  \draw (392bp,122bp) node {$_{k}$};
  \draw [->] (467bp,97bp) .. controls (476bp,97bp) and (485bp,97bp)  .. (494bp,98bp) .. controls (497bp,98bp) and (499bp,98bp)  .. (512bp,100bp);
  \draw (490bp,103bp) node {$_{u}$};
  \draw [->] (565bp,165bp) .. controls (576bp,165bp) and (589bp,165bp)  .. (611bp,165bp);
  \draw (588bp,172bp) node {$_{y}$};
  \draw [->] (468bp,168bp) .. controls (474bp,167bp) and (480bp,166bp)  .. (486bp,166bp) .. controls (491bp,165bp) and (497bp,165bp)  .. (513bp,165bp);
  \draw (490bp,171bp) node {$_{s}$};
  \draw [->] (464bp,241bp) .. controls (477bp,245bp) and (493bp,251bp)  .. (517bp,259bp);
  \draw (490bp,255bp) node {$_{p}$};
  \draw [->] (363bp,69bp) .. controls (373bp,65bp) and (385bp,62bp)  .. (396bp,66bp) .. controls (404bp,68bp) and (411bp,72bp)  .. (426bp,83bp);
  \draw (392bp,71bp) node {$_{m}$};

  \pgfsetstrokecolor{red}
  \draw (468bp,229bp) -- (463bp,244bp) -- (441bp,251bp) -- (419bp,244bp) -- (414bp,229bp) -- (429bp,217bp) -- (453bp,217bp) -- cycle;
  \pgfsetstrokecolor{black}
  \draw (441bp,233bp) node {$125$};


  \pgfsetstrokecolor{red}
  \draw (370bp,21bp) -- (365bp,36bp) -- (343bp,43bp) -- (321bp,36bp) -- (316bp,21bp) -- (331bp,9bp) -- (355bp,9bp) -- cycle;
  \pgfsetstrokecolor{black}
  \draw (343bp,25bp) node {$1115$};


  \pgfsetstrokecolor{red}
  \draw (566bp,46bp) -- (561bp,61bp) -- (539bp,68bp) -- (517bp,61bp) -- (512bp,46bp) -- (527bp,34bp) -- (551bp,34bp) -- cycle;
  \pgfsetstrokecolor{black}
  \draw (539bp,50bp) node {$17$};


  \pgfsetstrokecolor{red}
  \draw (566bp,100bp) -- (561bp,115bp) -- (539bp,122bp) -- (517bp,115bp) -- (512bp,100bp) -- (527bp,88bp) -- (551bp,88bp) -- cycle;
  \pgfsetstrokecolor{black}
  \draw (539bp,104bp) node {$44$};


  \pgfsetstrokecolor{red}
  \draw (468bp,25bp) -- (463bp,40bp) -- (441bp,47bp) -- (419bp,40bp) -- (414bp,25bp) -- (429bp,13bp) -- (453bp,13bp) -- cycle;
  \pgfsetstrokecolor{black}
  \draw (441bp,29bp) node {$116$};


  \pgfsetstrokecolor{red}
  \draw (370bp,231bp) -- (365bp,246bp) -- (343bp,253bp) -- (321bp,246bp) -- (316bp,231bp) -- (331bp,219bp) -- (355bp,219bp) -- cycle;
  \pgfsetstrokecolor{black}
  \draw (343bp,235bp) node {$1223$};


  \pgfsetstrokecolor{red}
  \draw (273bp,14bp) -- (267bp,29bp) -- (245bp,36bp) -- (223bp,29bp) -- (217bp,14bp) -- (233bp,2bp) -- (257bp,2bp) -- cycle;
  \pgfsetstrokecolor{black}
  \draw (245bp,18bp) node {$11114$};


  \pgfsetstrokecolor{red}
  \draw (566bp,161bp) -- (561bp,176bp) -- (539bp,183bp) -- (517bp,176bp) -- (512bp,161bp) -- (527bp,149bp) -- (551bp,149bp) -- cycle;
  \pgfsetstrokecolor{black}
  \draw (539bp,165bp) node {$35$};


  \pgfsetstrokecolor{red}
  \draw (468bp,168bp) -- (463bp,183bp) -- (441bp,190bp) -- (419bp,183bp) -- (414bp,168bp) -- (429bp,156bp) -- (453bp,156bp) -- cycle;
  \pgfsetstrokecolor{black}
  \draw (441bp,172bp) node {$233$};


  \pgfsetstrokecolor{red}
  \draw (566bp,263bp) -- (561bp,278bp) -- (539bp,285bp) -- (517bp,278bp) -- (512bp,263bp) -- (527bp,251bp) -- (551bp,251bp) -- cycle;
  \pgfsetstrokecolor{black}
  \draw (539bp,267bp) node {$26$};


  \pgfsetstrokecolor{red}
  \draw (664bp,161bp) -- (659bp,176bp) -- (637bp,183bp) -- (615bp,176bp) -- (610bp,161bp) -- (625bp,149bp) -- (649bp,149bp) -- cycle;
  \pgfsetstrokecolor{black}
  \draw (637bp,165bp) node {$8$};


  \pgfsetstrokecolor{red}
  \draw (273bp,231bp) -- (267bp,246bp) -- (245bp,253bp) -- (223bp,246bp) -- (217bp,231bp) -- (233bp,219bp) -- (257bp,219bp) -- cycle;
  \pgfsetstrokecolor{black}
  \draw (245bp,235bp) node {$11222$};


  \pgfsetstrokecolor{red}
  \draw (468bp,93bp) -- (463bp,108bp) -- (441bp,115bp) -- (419bp,108bp) -- (414bp,93bp) -- (429bp,81bp) -- (453bp,81bp) -- cycle;
  \pgfsetstrokecolor{black}
  \draw (441bp,97bp) node {$134$};


  \pgfsetstrokecolor{red}
  \draw (273bp,82bp) -- (267bp,97bp) -- (245bp,104bp) -- (223bp,97bp) -- (217bp,82bp) -- (233bp,70bp) -- (257bp,70bp) -- cycle;
  \pgfsetstrokecolor{black}
  \draw (245bp,86bp) node {$11123$};


  \pgfsetstrokecolor{red}
  \draw (370bp,75bp) -- (365bp,90bp) -- (343bp,97bp) -- (321bp,90bp) -- (316bp,75bp) -- (331bp,63bp) -- (355bp,63bp) -- cycle;
  \pgfsetstrokecolor{black}
  \draw (343bp,79bp) node {$1133$};


  \pgfsetstrokecolor{red}
  \draw (370bp,129bp) -- (365bp,144bp) -- (343bp,151bp) -- (321bp,144bp) -- (316bp,129bp) -- (331bp,117bp) -- (355bp,117bp) -- cycle;
  \pgfsetstrokecolor{black}
  \draw (343bp,133bp) node {$1124$};


  \pgfsetstrokecolor{red}
  \draw (173bp,14bp) -- (167bp,29bp) -- (142bp,36bp) -- (117bp,29bp) -- (111bp,14bp) -- (128bp,2bp) -- (156bp,2bp) -- cycle;
  \pgfsetstrokecolor{black}
  \draw (142bp,18bp) node {$111113$};


  \pgfsetstrokecolor{red}
  \draw (468bp,290bp) -- (463bp,305bp) -- (441bp,312bp) -- (419bp,305bp) -- (414bp,290bp) -- (429bp,278bp) -- (453bp,278bp) -- cycle;
  \pgfsetstrokecolor{black}
  \draw (441bp,294bp) node {$224$};


  \pgfsetstrokecolor{red}
  \draw (173bp,82bp) -- (167bp,97bp) -- (142bp,104bp) -- (117bp,97bp) -- (111bp,82bp) -- (128bp,70bp) -- (156bp,70bp) -- cycle;
  \pgfsetstrokecolor{black}
  \draw (142bp,86bp) node {$111122$};


  \pgfsetstrokecolor{red}
  \draw (68bp,14bp) -- (61bp,29bp) -- (34bp,36bp) -- (7bp,29bp) -- (0bp,14bp) -- (19bp,2bp) -- (49bp,2bp) -- cycle;
  \pgfsetstrokecolor{black}
  \draw (34bp,18bp) node {$1111112$};

\end{tikzpicture}
\end{figure}
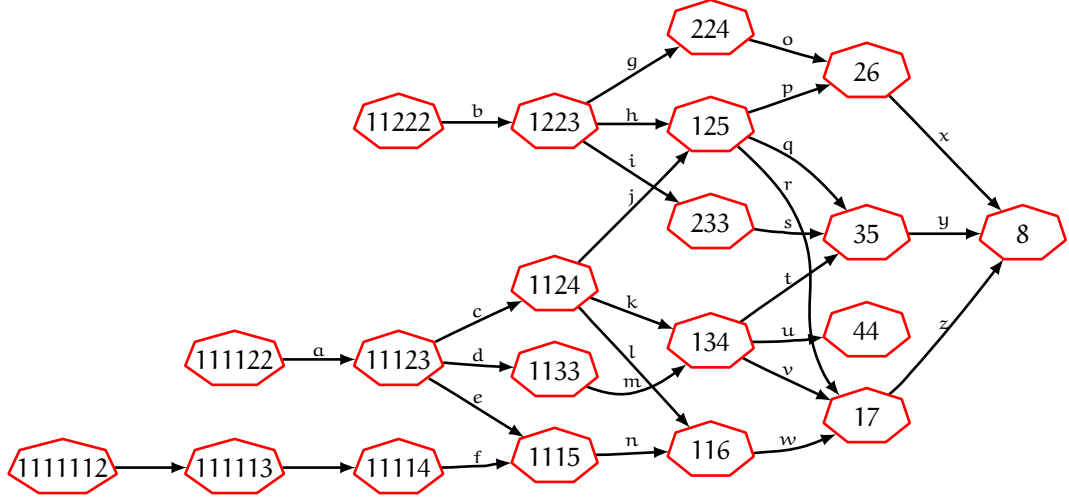

We define a map $\iota:Q_n\to k\mathcal{L}_n$ as follows.
Recall from \autoref{CompositionSection} that
we regard a partition as the P\'olya equivalence class of a composition,
but that we represent a partition by any convenient representative.
If $p$ is a vertex of $Q_n$ then we define
$\iota\left(p\right)$ to simply be $p$ itself,
now regarded as a class sum in $\mathcal{L}_n$. If $e$ is the edge
from a vertex $p$ to another vertex $q$, then by rearranging the
parts of $p$ we have
$p=p_{11}p_{12}p_2p_3\cdots p_j$ and $q=p_1p_2p_3\cdots p_j$
for some $j\in\mathbb{N}$ and some
$p_{11},p_{12},p_2,p_3,\ldots,p_j\in\mathbb{N}$
where $p_{11}<p_{12}$ and $p_1=p_{11}+p_{12}$. We put
$\iota\left(e\right)=
\left[\vcenter{\begin{xy}<.3cm,0cm>:
(2,2)="1"*+!U{_1};
"1";(1,1)*+!U{\color{red}p_{11}}**\dir{-};
"1";(3,1)*+!U{\color{red}p_{12}}**\dir{-};
(4,2)*+!UL{{\color{red}p_2p_3}\cdots{\color{red}p_j}};
\end{xy}}\right]$.
Note that $\iota$ satisfies
$\iota\left(xy\right)=\iota\left(y\right)\iota\left(x\right)$
whenever one of $x$ or $y$ is a vertex and the other
is an incident vertex or edge. This proves the following proposition.
\begin{proposition}\label{IotaHomomorphism}
The map $\iota$ extends to an anti-homomorphism $\iota:kQ_n\to k\mathcal{L}_n$.
\end{proposition}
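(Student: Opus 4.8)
The plan is to obtain the extension from the \emph{universal property of the path algebra} $kQ_n$ (see \cite{blue}): an anti-homomorphism $kQ_n\to k\mathcal{L}_n$ is the same datum as a choice of an element $\iota(p)\in k\mathcal{L}_n$ for each vertex $p$ of $Q_n$ and an element $\iota(e)\in k\mathcal{L}_n$ for each edge $e$, subject to the two requirements that the $\iota(p)$ be pairwise orthogonal idempotents and that $\iota(q)\,\iota(e)=\iota(e)=\iota(e)\,\iota(p)$ for every edge $e$ from $p$ to $q$ — the two vertices appearing on the sides opposite to the homomorphism case precisely because $\iota$ reverses products. Since $\iota$ has already been defined on vertices and edges, it only remains to verify these two conditions, which are exactly the relations $\iota(xy)=\iota(y)\iota(x)$ recorded just before the statement.

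First I would check that the elements $\iota(p)=p$, with $p$ ranging over the partitions of $n$, are pairwise orthogonal idempotents of $k\mathcal{L}_n$. These are the class sums of the forests of length zero, so under the equivalence $\varphi$ of \autoref{Equivalence} combined with \autoref{OrbitsCorrespond} they correspond to the set $\Lambda$ of \autoref{GotzSummary}, which is a complete set of pairwise orthogonal primitive idempotents of $k\mathcal{X}\cong k\mathcal{L}_n$. Alternatively one argues directly: a forest $X$ of length zero is a sequence of leaves, so $\f{X}=\s{X}=X$, and the class-sum product formula of \autoref{ActionSection} then collapses to $[p]\bullet[q]=[p]$ when $p$ and $q$ represent the same partition and to $0$ otherwise.

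Next I would check the border conditions for the edges. By construction, for an edge $e$ from $p$ to $q$ the forest underlying $\iota(e)$ consists of a single node sitting above the leaves $p_{11}<p_{12}$ together with the leaves $p_2,\dots,p_j$; hence its squash is a rearrangement of $q=p_1p_2\cdots p_j$ and its foliage is a rearrangement of $p=p_{11}p_{12}p_2\cdots p_j$. Therefore the partial products $q\bullet\iota(e)$ and $\iota(e)\bullet p$ are both defined, and passing to class sums yields $\iota(q)\,\iota(e)=\iota(e)=\iota(e)\,\iota(p)$, since these partial products merely substitute a matching sequence of leaves for a forest and so leave the forest class unchanged. With both conditions in hand the universal property delivers the anti-homomorphism $\iota:kQ_n\to k\mathcal{L}_n$; and since $\sum_p\iota(p)$ is the identity of $k\mathcal{L}_n$ it is in fact unital.

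The only step I expect to need genuine care is the first one: one must make sure that the class-sum multiplication of \autoref{ActionSection}, which in general is a sum over double cosets, really does reduce to the Kronecker delta on forests of length zero, and to ``do nothing'' when one factor has length zero. This is the point at which the structure of $\mathcal{L}_n$, as opposed to that of $L_n$, actually enters; everything after it is formal bookkeeping once the universal property of $kQ_n$ has been invoked.
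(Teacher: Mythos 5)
Your proposal is correct and takes essentially the same route as the paper: the paper's entire proof is the observation that $\iota(xy)=\iota(y)\iota(x)$ whenever one of $x,y$ is a vertex and the other an incident vertex or edge, which is exactly the orthogonal-idempotent condition on the $\iota(p)$ and the border conditions $\iota(q)\iota(e)=\iota(e)=\iota(e)\iota(p)$ that you verify, followed by the (implicit) universal property of the path algebra. You simply spell out the computations in $k\mathcal{L}_n$ that the paper leaves to the reader.
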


We show in \autoref{IotaInjective} that $\iota$
is injective and
in \autoref{ExtQuiver}
that $Q_n$ is the ordinary quiver of $\Sigma\left(\sym{n}\right)$.
One of the main ingredients in the proof of \autoref{ExtQuiver}
is the following technical lemma.

\begin{lemma}\label{NodesInImage}
If $e$ is an edge  of $Q_n$
then $\iota\left(e\right)\not\in\ker\Delta$.
\end{lemma}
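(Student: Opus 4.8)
The plan is to write $\Delta\left(\iota\left(e\right)\right)$ out explicitly as a $\mathbb{Z}$-linear combination of compositions of $n$ and to point to one composition that survives with a nonzero coefficient. Say $e$ runs from $p$ to $q$, so that, after rearranging parts, $p = p_{11}p_{12}p_2\cdots p_j$ and $q = p_1 p_2\cdots p_j$ with $p_{11}<p_{12}$ and $p_1 = p_{11}+p_{12}$; then $\iota\left(e\right) = \left[X\right]$, where $X$ is the forest with $j$ parts whose first part is the single node with leaf-children $p_{11}$ and $p_{12}$ and whose other parts are the leaves $p_2,\ldots,p_j$. Every forest in the P\'olya orbit of $X$ has exactly one node, so on all of them $\Delta$ agrees with $\delta$, which splits that node into the adjacent leaves $p_{11}p_{12}$, with a minus sign for the other order, and does nothing else. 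In particular, every composition occurring in $\Delta\left(\iota\left(e\right)\right)$ has the same multiset of parts as $p$; I call such a composition a composition \emph{of type $p$}.

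The main step is the coefficient formula: for a composition $c=c_1c_2\cdots c_{j+1}$ of type $p$, the coefficient of $c$ in $\Delta\left(\iota\left(e\right)\right)$ equals
\[
N_{p_{11},p_{12}}\left(c\right)-N_{p_{12},p_{11}}\left(c\right),
\]
where $N_{a,b}\left(c\right)$ denotes the number of indices $k$ with $c_k=a$ and $c_{k+1}=b$. To prove this I would note that each forest $A$ in the orbit of $X$ contributes $A^{+}-A^{-}$ to $\Delta\left(\iota\left(e\right)\right)$, where $A^{+}$ (respectively $A^{-}$) is $A$ with its node replaced by the two adjacent leaves $p_{11},p_{12}$ (respectively $p_{12},p_{11}$), and then check that $A\mapsto A^{+}$ is a bijection from $\left\{A\text{ in the orbit of }X : A^{+}=c\right\}$ onto $\left\{k : c_k=p_{11},\ c_{k+1}=p_{12}\right\}$, the inverse of which sends $k$ to the forest obtained from $c$ by contracting positions $k$ and $k+1$ to a node; the same argument applies to $A^{-}$.

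With the formula in hand, I choose $c$ by hand. Let $\alpha\ge 1$ be the multiplicity of the part $p_{11}$ in $p$; since $p_{12}\ne p_{11}$ is also a part of $p$, we have $\alpha\le j$, so a copy of $p_{12}$ remains after all $\alpha$ copies of $p_{11}$ are used. Let $c_0$ be the composition listing the $\alpha$ copies of $p_{11}$, then one copy of $p_{12}$, then the remaining parts of $p$ in any order. Then $N_{p_{11},p_{12}}\left(c_0\right)\ge 1$, because positions $\alpha$ and $\alpha+1$ of $c_0$ carry $p_{11}$ and $p_{12}$, whereas $N_{p_{12},p_{11}}\left(c_0\right)=0$, because every entry of $c_0$ equal to $p_{11}$ lies among its first $\alpha$ positions and so is preceded by $p_{11}$ or by nothing. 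Hence the coefficient of $c_0$ in $\Delta\left(\iota\left(e\right)\right)$ is at least $1$, so $\iota\left(e\right)\notin\ker\Delta$.

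The only point needing real care is the coefficient formula when some of the values $p_{11},p_{12},p_2,\ldots,p_j$ coincide: one must make sure that distinct forests in the orbit of $X$ which produce the same composition are each counted, that is, that ``contract positions $k$ and $k+1$ to a node'' really is a bijection onto the orbit of $X$. This is straightforward, since the position of the unique node determines the forest; everything else is bookkeeping.
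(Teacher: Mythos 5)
Your proof is correct and follows essentially the same strategy as the paper's: both identify the positive terms of $\Delta\left(\iota\left(e\right)\right)$ as the arrangements containing the adjacent pair $p_{11}p_{12}$ and the negative terms as those containing $p_{12}p_{11}$, and then exhibit a witness composition that admits the former adjacency but cannot admit the latter. Your coefficient formula $N_{p_{11},p_{12}}\left(c\right)-N_{p_{12},p_{11}}\left(c\right)$ and your witness (all copies of $p_{11}$ first, then $p_{12}$) are only a minor variation of the paper's sorted witness, whose unique possible descent $b\,q_{i+1}$ is shown to differ from $b\,a$.
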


\begin{proof} Suppose
$\iota\left(e\right)=
\left[\vcenter{\begin{xy}<.2cm,0cm>:
(2,2)="1"*+!U{_1};
"1";(1,1)*+!U{\color{red}a}**\dir{-};
"1";(3,1)*+!U{\color{red}b}**\dir{-};
(4,2)*+!UL{{\color{red}q_1q_2}\cdots{\color{red}q_j}};
\end{xy}}\right]$ and that
$0\le i\le j$ is such that
$q_1\le q_2\le\cdots\le q_i\le a<q_{i+1}\le\cdots\le q_j$.
Then the term $q_1q_2\cdots q_iabq_{i+1}\cdots q_j$ of
$\Delta\left(\vcenter{\begin{xy}<.2cm,0cm>:
(1,2)*+!UR{{\color{red}q_1q_2}\cdots{\color{red}q_i}};
(3,2)="1"*+!U{_1};
"1";(2,1)*+!U{\color{red}a}**\dir{-};
"1";(4,1)*+!U{\color{red}b}**\dir{-};
(5,2)*+!UL{{\color{red}q_{i+1}}\cdots{\color{red}q_j}};
\end{xy}}\right)$
has at most one descending subsequence, namely $bq_{i+1}$.
However, all the terms of $\Delta\left(\iota\left(e\right)\right)$
appearing with negative coefficients have the descending subsequence
$ba$ which is different from $bq_{i+1}$ since $a<q_{i+1}$.
Thus $\Delta\left(\iota\left(e\right)\right)$ cannot be zero.
\end{proof}

In an effort both to simplify notation and to shift
emphasis from the individual groups $\sym{n}$ to the family
$\bigcup_{n\in\mathbb{N}\cup\left\{0\right\}}\sym{n}$ of groups, we define
\[Q=\coprod_{n\in\mathbb{N}\cup\left\{0\right\}} Q_n\qquad
L=\coprod_{n\in\mathbb{N}\cup\left\{0\right\}}L_n\qquad
\mathcal{L}=\coprod_{n\in\mathbb{N}\cup\left\{0\right\}}\mathcal{L}_n\]
and regard $\iota$ as a map $kQ\to k\mathcal{L}$.

\section{The branch monoid}\label{BranchSection}
Let $\mathcal{B}$ be the set of symbols
$\bb{a}{b}$ for all $a,b\in\mathbb{N}$ with $a<b$.
We call the free monoid $\mathcal{B}^\ast$ the {\em branch monoid}
and we write the element
$\bb{a_1}{b_1}\bb{a_2}{b_2}\cdots\bb{a_l}{b_l}$ of $\mathcal{B}^\ast$ as
$\lb{a_1}{b_1}\mb{a_2}{b_2}\cdots\rb{a_l}{b_l}$ to simplify notation.
The notation is meant to reflect the fact that
the elements of $\mathcal{B}^\ast$
can be used to build forests as we now describe.

If $X$ is a labeled forest then let
$X.\bb{a}{b}$ be the sum of all forests
that can be obtained from $X$ by replacing a leaf $a+b$ with
$\vcenter{\begin{xy}<.2cm,0cm>:
(2,3)="1"*+!U{_l};
"1";(1,2)*+!U{\color{red}a}**\dir{-};
"1";(3,2)*+!U{\color{red}b}**\dir{-};
\end{xy}}$
where $l=\ell\left(X\right)+1$.
If $P$ is a path in $Q$ with source $p$ then we define $P.\bb{a}{b}$
to be the path obtained from $P$ by appending the edge 
from $abq_1\cdots q_j$ to $p$
if $p$ has a part $a+b$, where
$q_1,\ldots,q_j\in\mathbb{N}$ are the remaining parts of $p$.
We put $P.\bb{a}{b}=0$ if $p$ has no part $a+b$.
Then $\mathcal{B}^\ast$ acts on $kL$ and on $kQ$ by extending
the definitions above by linearity.
From the definitions we have
\begin{equation}\label{DProduct}
\left(P_1P_2\right).B=\left(P_1.B\right)P_2
\qquad\text{and}\qquad \left(X_1\bullet X_2\right).B=X_1\bullet X_2.B
\end{equation}
for $P_1,P_2\in kQ$ and $X_1,X_2\in kL$ and $B\in\mathcal{B}^\ast$.
If $p$ is a partition containing a part $a+b$ then
\begin{equation}\label{PProduct}
\iota\left(p\right).\bb{a}{b}
=\left[\vcenter{\begin{xy}<.3cm,0cm>:
(2,2)="1"*+!U{_1};
"1";(1,1)*+!U{\color{red}a}**\dir{-};
"1";(3,1)*+!U{\color{red}b}**\dir{-};
(4,2)*+!UL{{\color{red}q_1q_2}\cdots{\color{red}q_j}};
\end{xy}}\right]=\iota\left(p.\bb{a}{b}\right)
\end{equation}
where $q_1,q_2,\ldots,q_j$ are the remaining parts of $p$.
On the other hand,
both $\iota\left(p\right).\bb{a}{b}$ and $p.\bb{a}{b}$ are zero
if $p$ has no part $a+b$.
Now if $P$ is a path in $Q$ with source $p$,
then using \autoref{DProduct} and \autoref{PProduct} we have
\[\iota\left(P\right).B
=\iota\left(pP\right).B
=\left(\iota\left(P\right)\bullet\iota\left(p\right)\right).B
=\iota\left(P\right)\bullet\iota\left(p.B\right)
=\iota\left(\left(p.B\right)P\right)
=\iota\left(P.B\right)\]
for all $B\in k\mathcal{B}^\ast$.
This proves the following proposition.
\begin{proposition}\label{IotaDHomomorphism}
The map $\iota$ is a homomorphism of $k\mathcal{B}^\ast$-modules.
\end{proposition}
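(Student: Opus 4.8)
The plan is to check directly that $\iota$ intertwines the two right actions, that is, that $\iota(x.B)=\iota(x).B$ for all $x\in kQ$ and all $B\in k\mathcal{B}^\ast$. Since $\iota$ and both actions are $k$-linear and the monoid $\mathcal{B}^\ast$ is generated by $\mathcal{B}$, it suffices to establish the single equality $\iota(y.\bb{a}{b})=\iota(y).\bb{a}{b}$ for an arbitrary $y\in kQ$ and an arbitrary generator $\bb{a}{b}\in\mathcal{B}$; the general statement then follows by induction on the length of $B$, stripping off one generator at a time. A further appeal to linearity reduces this single equality to the case where $y=P$ is a path of $Q$, say with source vertex $p$.

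For such a $P$ I would run the chain of identities already implicit in \autoref{DProduct}, \autoref{PProduct}, and the anti-homomorphism property of \autoref{IotaHomomorphism}. Precomposing $P$ with the trivial path at $p$ gives $P=pP$, hence $\iota(P)=\iota(pP)=\iota(P)\bullet\iota(p)$; acting by $\bb{a}{b}$ and using the absorption rule $(X_1\bullet X_2).B=X_1\bullet X_2.B$ from \autoref{DProduct} yields $\iota(P).\bb{a}{b}=\iota(P)\bullet(\iota(p).\bb{a}{b})$. By \autoref{PProduct} the inner factor equals $\iota(p.\bb{a}{b})$, where $p.\bb{a}{b}$ is the edge of $Q$ obtained by attaching the new edge at $p$, and is $0$ when $p$ has no part $a+b$. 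Folding the anti-homomorphism property back up gives $\iota(P)\bullet\iota(p.\bb{a}{b})=\iota((p.\bb{a}{b})P)$, and since attaching that edge at the source of $P$ is by definition exactly the composite $(p.\bb{a}{b})P=P.\bb{a}{b}$, the chain closes with $\iota(P).\bb{a}{b}=\iota(P.\bb{a}{b})$. In the degenerate case the same chain applies with the zero forest in place of $\iota(p.\bb{a}{b})$, and both sides vanish.

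I do not anticipate any genuine difficulty: the statement is a formal consequence of \autoref{DProduct}, \autoref{PProduct}, and the fact that $\iota$ reverses products. The one point needing a little care is the bookkeeping of sources and targets, namely checking that, under the path-composition convention in $kQ$ for which $\iota$ is an anti-homomorphism, ``attaching the edge $p.\bb{a}{b}$ at the source of $P$'' coincides on the nose with the product $(p.\bb{a}{b})P$ in the path algebra. Once that identification is spelled out, the equalities line up and the proof is complete.
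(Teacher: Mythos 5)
Your proposal is correct and follows essentially the same route as the paper: the paper's proof is exactly the chain $\iota(P).B=\iota(pP).B=(\iota(P)\bullet\iota(p)).B=\iota(P)\bullet\iota(p.B)=\iota((p.B)P)=\iota(P.B)$, built from \autoref{DProduct}, \autoref{PProduct}, and the anti-homomorphism property of \autoref{IotaHomomorphism}. Your explicit reduction to single generators $\bb{a}{b}$ and induction on the length of $B$ only spells out what the paper leaves implicit.
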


The branch monoid provides a convenient language for specifying
paths in $Q$. Namely, we can uniquely specify any path $P$
as $p.B$ where $p$ is the destination of $P$ and $B$ is an element
of $\mathcal{B}^\ast$. Furthermore, the element $B$ is related to
$\iota\left(P\right)$ in the way described in the following lemma. 

\begin{lemma}\label{IotaEquivariant} Let $P=p.\lb{a_1}{b_1}\mb{a_2}{b_2}
\cdots\rb{a_l}{b_l}$ be a path in $Q$. Then the node 
$\vcenter{\begin{xy}<.3cm,0cm>:
(2,3)="1"*+!U{_i};
"1";(1,2)*+!U{Z_1}**\dir{-};
"1";(3,2)*+!U{Z_2}**\dir{-};
\end{xy}}$
of every term of $\iota\left(P\right)$
satisfies $\overline{Z_1}=a_i$
and $\overline{Z_2}=b_i$ for all $1\le i\le l$.
\end{lemma}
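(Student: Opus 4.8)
The plan is to induct on $l$, using that $\iota$ is a homomorphism of $k\mathcal{B}^\ast$-modules (\autoref{IotaDHomomorphism}) to reduce the computation of $\iota(P)$ to an iterated branch action on $\iota(p)$. Writing $B=\bb{a_1}{b_1}\bb{a_2}{b_2}\cdots\bb{a_l}{b_l}\in\mathcal{B}^\ast$, we have $\iota(P)=\iota(p).B$, and since $\mathcal{B}^\ast$ acts as a monoid this is obtained from $\iota(p)$ by applying $\bb{a_1}{b_1}$, then $\bb{a_2}{b_2}$, and so on up to $\bb{a_l}{b_l}$. Moreover, by the definition of $\iota$ on the vertices of $Q$, the element $\iota(p)$ is $p$ itself regarded as an element of $\mathcal{L}_n$, namely a sum of node-free forests; this makes the case $l=0$ vacuous, so the whole argument lives in the inductive step.

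For the induction I would carry the slightly strengthened hypothesis that \emph{every term of $\iota\bigl(p.\lb{a_1}{b_1}\cdots\rb{a_m}{b_m}\bigr)$ is a labeled forest of length~$m$ in which, for each $1\le i\le m$, the node labeled~$i$ has a left subtree of squash~$a_i$ and a right subtree of squash~$b_i$}; the length clause is included so that the label of the node created at the next step can be pinned down. Write $P=P'.\bb{a_l}{b_l}$ with $P'=p.\lb{a_1}{b_1}\cdots\rb{a_{l-1}}{b_{l-1}}$, a sub-path of $P$ and hence again a path in $Q$. The definition of the branch action on $kL$ expresses $\iota(P)=\iota(P').\bb{a_l}{b_l}$ as the sum, over the terms $X$ of $\iota(P')$ and over the leaves of $X$ of value $a_l+b_l$, of the forest obtained from $X$ by expanding such a leaf into the node labeled $\ell(X)+1$ whose left child is the leaf $a_l$ and whose right child is the leaf $b_l$. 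By the inductive hypothesis every such $X$ has length $l-1$, so this new node is labeled~$l$; the resulting forest therefore has length~$l$, and its node labeled~$l$ visibly has left and right subtrees of squash $a_l$ and~$b_l$.

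It then remains to check that the nodes labeled $1,\dots,l-1$ are undisturbed, and this is the crux --- essentially the only step that is not pure bookkeeping. The point is that expanding a leaf of value $c=a_l+b_l$ into a subtree of squash~$c$ changes the squash of no subtree of the ambient forest; this is immediate from \autoref{SquashTreeDefinition}, since the inserted node has squash $a_l+b_l=c$. Because the branch action does not relabel the pre-existing nodes, the node labeled $i<l$ in a term of $\iota(P)$ is obtained from the node labeled~$i$ of the corresponding term $X$ of $\iota(P')$ by at most expanding one leaf inside one of its two subtrees, so by the preceding remark the squashes of those subtrees are unchanged and equal $a_i$ and~$b_i$ by hypothesis; this closes the induction and proves the lemma. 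The genuinely delicate configuration to keep in mind is exactly the one this remark handles, namely when the expanded leaf $a_l+b_l$ lies inside a subtree of some earlier node $i<l$. Finally, since the lemma concerns only the terms that actually occur in $\iota(P)$, any term $X$ of $\iota(P')$ with no leaf of value $a_l+b_l$ simply contributes nothing and may be ignored.
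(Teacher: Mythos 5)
Your argument is correct and is essentially the paper's own proof: induct on $l$, write $P=P'.\bb{a_l}{b_l}$, use \autoref{IotaDHomomorphism} to get $\iota(P)=\iota(P').\bb{a_l}{b_l}$, observe the newly created node labeled $l$ satisfies the claim, and appeal to induction for the earlier nodes. The extra details you supply (tracking the length of the terms so the new node is labeled $l$, and noting that expanding a leaf $a_l+b_l$ leaves the squash of every enclosing subtree unchanged) are exactly the bookkeeping the paper leaves implicit, so there is no substantive difference.
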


\begin{proof} The assertion holds by definition if $l$
equals zero or one. Otherwise let $P'=p.\lb{a_1}{b_1}\mb{a_2}{b_2}
\cdots\rb{a_{l-1}}{b_{l-1}}$ so that
$P=P'.\bb{a_l}{b_l}$
and $\iota\left(P\right)=\iota\left(P'\right).\bb{a_l}{b_l}$
by \autoref{IotaDHomomorphism}.
Then $\iota\left(P\right)$ is obtained from $\iota\left(P'\right)$ by replacing
a leaf $a_l+b_l$ in every term with
$\vcenter{\begin{xy}<.3cm,0cm>:
(2,2)="1"*+!U{_l};
"1";(1,1)*+!U{\color{red}a_l}**\dir{-};
"1";(3,1)*+!U{\color{red}b_l}**\dir{-};
\end{xy}}$. Thus the node labeled $l$ of every term of $\iota\left(P\right)$
satisfies the assertion,
while the other nodes satisfy the assertion by induction.
\end{proof}

\begin{corollary}\label{IotaInjective}
The anti-homomorphism $\iota$ is injective.
\end{corollary}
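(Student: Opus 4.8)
The plan is to show that $\iota$ is injective by exhibiting, for every path $P$ in $Q$, enough combinatorial data in $\iota(P)$ to recover $P$ uniquely, so that distinct basis paths have $k$-linearly independent images. Concretely, by \autoref{IotaDHomomorphism} every path can be written uniquely as $p.B$ with $p$ the destination partition and $B=\lb{a_1}{b_1}\mb{a_2}{b_2}\cdots\rb{a_l}{b_l}\in\mathcal{B}^\ast$, so it suffices to recover the destination $p$ and the branch word $B$ from $\iota(P)$. First I would observe that $\iota(P)$ is a nonzero element of $k\mathcal{L}_n$: its leading forest (say, the one appearing in the orbit-sum with a chosen representative) has value $n$, and by \autoref{IotaEquivariant} its node labeled $i$ has left subtree of value $a_i$ and right subtree of value $b_i$; reading the squash of that forest recovers $p$, and reading off the $(a_i,b_i)$ for $i=1,\dots,l$ recovers $B$. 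Thus the map $P\mapsto\iota(P)$ is injective on the set of paths.

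The step requiring care is upgrading injectivity on paths to injectivity of the linear extension $\iota\colon kQ\to k\mathcal{L}$. Since the paths form a $k$-basis of $kQ$, I would argue that their images $\{\iota(P)\}$ are linearly independent in $k\mathcal{L}$. For this I would use the data above to define a partial order or a "leading term'' on forest-classes: each $\iota(P)$ is the class-sum $[X_P]$ of a specific labeled forest $X_P$ (the one whose part ordering matches the chosen representative of $p$), and by \autoref{IotaEquivariant} the pair (squash, list of node-splits) $\bigl(\s{X_P},(a_1,b_1),\dots,(a_l,b_l)\bigr)$ is determined by and determines $P$. Different paths $P\ne P'$ therefore have $[X_P]$ and $[X_{P'}]$ with different such invariants, hence $X_P$ and $X_{P'}$ lie in different P\'olya orbits, so $[X_P]$ and $[X_{P'}]$ are distinct elements of the basis $\mathcal{L}_n$ of $k\mathcal{L}_n$; distinct basis elements are linearly independent.

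Putting these together: a nontrivial linear relation $\sum_P c_P\,\iota(P)=0$ would, after grouping by the invariant $\bigl(\s{X_P},(a_i,b_i)_i\bigr)$, force each $c_P=0$ because the $[X_P]$ are pairwise distinct members of the basis $\mathcal{L}$. Hence $\ker\iota=0$ and $\iota$ is injective; combined with \autoref{IotaHomomorphism} this proves \autoref{IotaInjective}. The main obstacle I anticipate is the bookkeeping in the second step: one must be careful that the forest $X_P$ chosen as a representative of the class-sum $\iota(P)$ is genuinely determined by $P$ (not just its orbit), which is exactly what \autoref{IotaEquivariant} together with the defining recipe for $\iota$ on edges guarantees, and that the invariant is a complete invariant of the orbit of $X_P$ — both points are immediate from the fact that a labeled forest is reconstructed from its sequence of node-splits, but they need to be stated cleanly rather than hand-waved.
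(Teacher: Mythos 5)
Your overall strategy is the paper's: use \autoref{IotaEquivariant} to recover the path $P$ from the forests occurring in $\iota\left(P\right)$, deduce that images of distinct paths cannot interfere, and conclude linear independence. However, there is a genuine error in how you carry out the linear-independence step. You assert that each $\iota\left(P\right)$ \emph{is} a single orbit sum $\left[X_P\right]$ of one specific labeled forest. This is false as soon as $P$ has length at least two: $\iota$ is an anti-homomorphism into $k\mathcal{L}$, and products of orbit sums acquire extra terms --- exactly the failure of factorization discussed at the start of \autoref{QuiverSection}. For instance, for $P=p.\lb{1}{3}\rb{1}{2}$ with $p=34$ the paper computes $\iota\left(P\right)$ as a sum of \emph{two} distinct orbit sums. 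Consequently your concluding argument (``the $\left[X_P\right]$ are pairwise distinct members of the basis $\mathcal{L}$, hence independent'') does not apply as stated: a priori a term of $\iota\left(P\right)$ could coincide with a term of $\iota\left(P'\right)$ for some other path $P'$, and cancellation across different paths would then be possible.

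The repair is exactly the content of \autoref{IotaEquivariant}, which you in fact quote correctly in your first paragraph: the lemma constrains \emph{every} term of $\iota\left(P\right)$, not just a chosen leading one. Every orbit sum occurring in $\iota\left(P\right)$ has squash equal (as a partition) to the destination of $P$ and node data $\left(a_i,b_i\right)$ equal to the branch word of $P$, and the pair (destination, branch word) determines $P$ uniquely; hence the supports of $\iota\left(P\right)$ and $\iota\left(P'\right)$ in the basis $\mathcal{L}$ are disjoint whenever $P\ne P'$. It is this disjointness of supports, rather than pairwise distinctness of single basis elements, that forces all coefficients in a relation $\sum_P c_P\,\iota\left(P\right)=0$ to vanish --- and it is precisely the paper's one-line proof.
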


\begin{proof} By \autoref{IotaEquivariant} the images of distinct
paths are supported on disjoint subsets of $\mathcal{L}$.
\end{proof}

\section{Unlabeled forests}\label{UnlabeledSection}
To compute the kernel of $\Delta:k\mathcal{L}\to k\mathbb{N}^\ast$
it will be helpful
to introduce an algebra through which $\Delta$ factors.
Then the kernel of $\Delta$ can be assembled from
the kernels of its factors.
Let $M$ be the category of
{\em unlabeled forests}, which are simply sequences
of binary trees whose leaves are natural numbers.
The definitions of the foliage, squash, length,
value, and product of unlabeled forests can be easily adapted
from the definitions for labeled forests, as can the P\'olya action
and the action of $k\mathcal{B}^\ast$ on $M$. Then 
\[M=\coprod_{n\in\mathbb{N}\cup\left\{0\right\}} M_n
\qquad\text{and}\qquad
\mathcal{M}=\coprod_{n\in\mathbb{N}\cup\left\{0\right\}}\mathcal{M}_n\]
where $M_n$ is the category of unlabeled forests of value $n$
and $\mathcal{M}$ and $\mathcal{M}_n$ are the sets 
of P\'olya class sums in $kM$ and $kM_n$.

There is a map $\E:L\to M$ given by erasing
the node labels of a forest.
If $X$ is a labeled forest with $j$ parts, then we denote
by $\alpha_X$ the index
of the stabilizer of $X$ in $\sym{j}$ in the stabilizer
of $\E\left(X\right)$ in $\sym{j}$.

\begin{lemma}\label{IndexStabilizer} If $X\in L$ then $\E\left[X\right]
=\alpha_X\left[\E\left(X\right)\right]$.
\end{lemma}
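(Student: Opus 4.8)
Looking at Lemma~\ref{IndexStabilizer}, the statement relates the image under $\E$ of the P\'olya-orbit sum $\left[X\right]$ in $kL$ to the orbit sum $\left[\E\left(X\right)\right]$ in $kM$, with the multiplicity $\alpha_X$ being the index of the stabilizer of $X$ inside the stabilizer of $\E\left(X\right)$ in the symmetric group acting on the parts.

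\medskip

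The plan is to compute $\E\left[X\right]$ directly from the orbit–stabilizer theorem. Suppose $X\in L$ has $j$ parts, and let $G=\sym{j}$ act on forests with $j$ parts by the P\'olya action. First I would observe that $\E$ is $G$-equivariant: erasing node labels commutes with permuting parts, so $\E\left(X.\pi\right)=\E\left(X\right).\pi$ for all $\pi\in G$. Consequently $\E$ carries the $G$-orbit of $X$ onto the $G$-orbit of $\E\left(X\right)$, and $\E\left[X\right]=\sum_{\pi\in G/\mathrm{Stab}_G(X)}\E\left(X\right).\pi$, where the sum runs over a transversal of the stabilizer $\mathrm{Stab}_G(X)$. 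Next I would group these terms according to the coarser partition of $G$ by cosets of $H=\mathrm{Stab}_G(\E(X))\supseteq\mathrm{Stab}_G(X)$: each coset of $H$ contains exactly $\alpha_X=[H:\mathrm{Stab}_G(X)]$ cosets of $\mathrm{Stab}_G(X)$, and all $\alpha_X$ of them contribute the same forest $\E(X).\pi$ (since that forest depends only on the $H$-coset). Summing over a transversal of $H$ in $G$ then gives exactly $\left[\E(X)\right]$, so $\E\left[X\right]=\alpha_X\left[\E(X)\right]$.

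\medskip

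The one point that needs a word of care — and I expect it to be the only real content beyond bookkeeping — is the containment $\mathrm{Stab}_G(X)\subseteq\mathrm{Stab}_G(\E(X))$, which makes $\alpha_X$ a genuine index of subgroups rather than of cosets. This holds because if $\pi$ fixes $X$ as a labeled forest then a fortiori it fixes the underlying unlabeled forest $\E(X)$; equivalently, $\mathrm{Stab}_G(X)=\mathrm{Stab}_G(\E(X))\cap\mathrm{Stab}_G(\text{node-label data})$. In fact, as already noted in the paragraph preceding \autoref{LSubalgebra}, the stabilizer of $X$ permutes only parts of length zero (leaves), since parts of positive length carry distinct node labels; so $\alpha_X$ measures exactly the freedom, among the symmetries of the \emph{unlabeled} picture, to permute isomorphic positive-length trees — which $\E$ collapses but the labels of $X$ do not. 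I would state this containment explicitly, invoke orbit–stabilizer for both groups, and then the double-counting of cosets finishes the proof in a line.
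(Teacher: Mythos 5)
Your proof is correct. The paper actually offers no proof of this lemma at all — it is stated as routine and followed only by the illustrative example with $\alpha_X=2$ — and your orbit–stabilizer argument (equivariance of $\E$ under the P\'olya action, the containment $\mathrm{Stab}(X)\subseteq\mathrm{Stab}(\E(X))$, and double-counting cosets) is exactly the standard verification the authors leave implicit, so nothing further is needed.
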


For example, if $X=
\vcenter{\begin{xy}<.2cm,0cm>:
(2,2)="1"*+!U{_1};
"1";(1,1)*+!U{\color{red}1}**\dir{-};
"1";(3,1)*+!U{\color{red}2}**\dir{-};
(5,2)="2"*+!U{_2};
"2";(4,1)*+!U{\color{red}1}**\dir{-};
"2";(6,1)*+!U{\color{red}2}**\dir{-};
\end{xy}}$
then
$\left[X\right]=
\vcenter{\begin{xy}<.2cm,0cm>:
(2,2)="1"*+!U{_1};
"1";(1,1)*+!U{\color{red}1}**\dir{-};
"1";(3,1)*+!U{\color{red}2}**\dir{-};
(5,2)="2"*+!U{_2};
"2";(4,1)*+!U{\color{red}1}**\dir{-};
"2";(6,1)*+!U{\color{red}2}**\dir{-};
\end{xy}}
+\vcenter{\begin{xy}<.2cm,0cm>:
(2,2)="1"*+!U{_2};
"1";(1,1)*+!U{\color{red}1}**\dir{-};
"1";(3,1)*+!U{\color{red}2}**\dir{-};
(5,2)="2"*+!U{_1};
"2";(4,1)*+!U{\color{red}1}**\dir{-};
"2";(6,1)*+!U{\color{red}2}**\dir{-};
\end{xy}}$
while $\left[\E\left(X\right)\right]=
\vcenter{\begin{xy}<.2cm,0cm>:
(2,2)="1";
"1";(1,1)*+!U{\color{red}1}**\dir{-};
"1";(3,1)*+!U{\color{red}2}**\dir{-};
(5,2)="2";
"2";(4,1)*+!U{\color{red}1}**\dir{-};
"2";(6,1)*+!U{\color{red}2}**\dir{-};
\end{xy}}$
so that $\E\left[X\right]=2\left[\E\left(X\right)\right]$.

Recall that the product in $L$ of two forests is formed
by replacing the leaves in one forest with the trees of the other.
Since this process depends on the foliage and squash
but not the node labels of the two forests,
we observe that up to node label erasure, the same
products are formed with or without the node labels.
This means that $\E$ is a functor and that
the induced map $\E:kL\to kM$ is an algebra homomorphism.
Then since the restriction of $\E$
to the subalgebra $k\mathcal{L}$ has image
in $k\mathcal{M}$ by \autoref{IndexStabilizer} we have
the following result.
\begin{proposition}
The map $\E:k\mathcal{L}\to k\mathcal{M}$ given by erasing
node labels is an algebra homomorphism.
\end{proposition}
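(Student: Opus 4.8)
The plan is to obtain the statement in two steps: first verify that label erasure is an algebra homomorphism $\E\colon kL\to kM$ between the ambient forest algebras, and then verify that it carries the subalgebra $k\mathcal{L}$ into $k\mathcal{M}$, so that the asserted map is simply the restriction.

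For the first step I would check that erasing node labels is a functor $L\to M$. The foliage $\f{X}$ and the squash $\s{X}$ of a forest do not depend on its node labels, so $\f{X}=\s{Y}$ holds if and only if $\f{\E(X)}=\s{\E(Y)}$; and when this holds the product $X\bullet Y$, being the substitution of the trees of $Y$ into the leaves of $X$, is formed in exactly the same way whether or not the nodes are labeled, the relabelling by $\ell(X)$ commuting with erasure. Hence $\E(X\bullet Y)=\E(X)\bullet\E(Y)$, with both sides zero precisely when $\f{X}\ne\s{Y}$, so the induced linear map $\E\colon kL\to kM$ is an algebra homomorphism.

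For the second step I would invoke \autoref{IndexStabilizer}: for every labeled forest $X$ we have $\E[X]=\alpha_X[\E(X)]\in k\mathcal{M}$. Since the class sums $[X]$ span $k\mathcal{L}$, this already gives $\E(k\mathcal{L})\subseteq k\mathcal{M}$. Working over a field of characteristic zero, the same identity rewritten as $[\E(X)]=\alpha_X^{-1}\E[X]$ shows moreover that $\E$ maps $k\mathcal{L}$ \emph{onto} $k\mathcal{M}$, so that $k\mathcal{M}$ is the image of the subalgebra $k\mathcal{L}$ (\autoref{LSubalgebra}) under the algebra homomorphism $\E$ and is therefore itself a subalgebra of $kM$. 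Restricting $\E\colon kL\to kM$ to $k\mathcal{L}$ and corestricting to $k\mathcal{M}$ then yields the claimed algebra homomorphism.

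I do not expect any genuine obstacle here: the whole content is carried by \autoref{IndexStabilizer} together with the observation that tree substitution is insensitive to node labels, and the rest is formal bookkeeping. The one point I would state with a little care is why $k\mathcal{M}$ is a subalgebra of $kM$ in the first place, so that the codomain of the restricted map is meaningful; I would prefer to deduce this from the surjectivity of $\E$ as above rather than rerun the double-coset computation of \autoref{LSubalgebra} in the unlabeled setting.
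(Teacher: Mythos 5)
Your proof is correct and follows essentially the same route as the paper: label erasure is a functor because the product $\bullet$ depends only on foliage and squash, so $\E\colon kL\to kM$ is an algebra homomorphism, and \autoref{IndexStabilizer} shows the restriction to $k\mathcal{L}$ lands in $k\mathcal{M}$. Your extra observation that $\alpha_X$ is a nonzero integer, so $\E$ maps $k\mathcal{L}$ onto $k\mathcal{M}$ and thereby exhibits $k\mathcal{M}$ as a subalgebra of $kM$, is a small but sound addition beyond what the paper spells out.
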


As with labeled forests, the definition of unlabeled forests
can be made mathematically precise
by defining the set of unlabeled trees $\mathbb{M}$
to be the minimal set containing
$\mathbb{N}$ and also containing the tuple
$\left(X_1,X_2\right)$ whenever
$X_1,X_2\in\mathbb{M}$.
Then for example, the map $\E$ can be defined by
\[\E\left(X\right)=\begin{cases}
X&\text{if $X\in\mathbb{N}$}\\
\left(\E\left(X_1\right),\E\left(X_2\right)\right)
&\text{if $X=\left(X_1,i,X_2\right)\in\mathbb{L}$}\end{cases}\]
and similarly for the other functions of unlabeled forests.

\section{Alignment}\label{AlignmentSection}
Let $\mathbb{M}$ be the free magma generated by $\mathbb{N}$.
We denote the product of two elements $X$ and $Y$ of $\mathbb{M}$ by
$\vcenter{\begin{xy}<.2cm,0cm>:
(1,2)="1";
"1";(0,1)*+!U{X}**\dir{-};
"1";(2,1)*+!U{Y}**\dir{-};
\end{xy}}$. Although we could introduce a symbol for this operation,
after several iterations, it becomes more legible 
to simply represent elements of $\mathbb{M}$
as binary trees. We define the ideals
\begin{align*}
N&=\left\langle
\vcenter{\begin{xy}<.2cm,0cm>:
(1,2)="1";
"1";(0,1)*+!U{X}**\dir{-};
"1";(2,1)*+!U{Y}**\dir{-};
\end{xy}}
+\vcenter{\begin{xy}<.2cm,0cm>:
(1,2)="1";
"1";(0,1)*+!U{Y}**\dir{-};
"1";(2,1)*+!U{X}**\dir{-};
\end{xy}}
\mid X,Y\in\mathbb{M}\right\rangle\\
J&=\left\langle
\vcenter{\begin{xy}<.2cm,0cm>:
(1,3)="1";
"1";(0,2)*+!U{X}**\dir{-};
"1";(3,2)="2"**\dir{-};
"2";(2,1)*+!U{Y}**\dir{-};
"2";(4,1)*+!U{Z}**\dir{-};
\end{xy}}
+\vcenter{\begin{xy}<.2cm,0cm>:
(1,3)="1";
"1";(0,2)*+!U{Y}**\dir{-};
"1";(3,2)="2"**\dir{-};
"2";(2,1)*+!U{Z}**\dir{-};
"2";(4,1)*+!U{X}**\dir{-};
\end{xy}}
+\vcenter{\begin{xy}<.2cm,0cm>:
(1,3)="1";
"1";(0,2)*+!U{Z}**\dir{-};
"1";(3,2)="2"**\dir{-};
"2";(2,1)*+!U{X}**\dir{-};
"2";(4,1)*+!U{Y}**\dir{-};
\end{xy}}
\mid X,Y,Z\in\mathbb{M}\right\rangle
\end{align*}
of $k\mathbb{M}$ and recall that
$k\mathbb{M}/\left(N+J\right)$ defines the free Lie algebra
over $k$ generated by~$\mathbb{N}$.

Since the elements of $\mathbb{M}$ correspond with elements
of $M$ that have exactly one part,
we can identify arbitrary elements of $M$ with the elements
of the free monoid $\mathbb{M}^\ast$.
Under this identification, the category $M$
has, in addition to the product $\bullet$,
another product coming from concatenation
in $\mathbb{M}^\ast$.
Let $\mathcal{N}$ and $\mathcal{J}$
be the ideals of $kM$ with respect to concatenation
generated by $N$ and $J$ respectively.

Let $\pi:k\mathbb{M}\to k\mathbb{N}^\ast$ be defined by
$\pi\left(x\right)=x$ for $x\in\mathbb{N}$ and $\pi\left(
\vcenter{\begin{xy}<.2cm,0cm>:
(1,2)="1";
"1";(0,1)*+!U{X}**\dir{-};
"1";(2,1)*+!U{Y}**\dir{-};
\end{xy}}\right)
=\pi\left(X\right)\pi\left(Y\right)-\pi\left(Y\right)\pi\left(X\right)$
for $X,Y\in\mathbb{M}$.
Then $\pi$ extends to
a monoid algebra homomorphism $\pi:kM\to k\mathbb{N}^\ast$ and
the kernel of $\pi$ is the ideal $\mathcal{N}+\mathcal{J}$
generated by the kernel $N+J$ of $\pi:k\mathbb{M}\to k\mathbb{N}^\ast$.
Recall that the map $\Delta$ replaces the nodes of a labeled tree with Lie
brackets in the order specified by the node labels. The relationship between
$\Delta$ and $\pi$ is given in the following lemma.

\begin{lemma}\label{DeltaPiE} $\Delta=\pi\circ\E$\end{lemma}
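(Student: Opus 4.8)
\textbf{Proof proposal for \autoref{DeltaPiE}.}

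The plan is to prove that $\Delta = \pi \circ \E$ on $kL$ by structural induction on the length $\ell(X)$ of a labeled forest $X$. The two maps agree trivially in the base case $\ell(X) = 0$: here $X$ is a sequence of leaves, i.e.\ a composition of $n$, and $\Delta(X) = X$ by definition, while $\E(X)$ is the same composition viewed as an unlabeled forest with no nodes, on which $\pi$ acts as the identity. So the whole content is the inductive step, and the natural tool is the difference operator $\delta$ on $kL_n$ introduced in \autoref{DeltaSection}, together with the identity $\Delta = \delta^{\ell(X)}$.

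First I would establish the inductive step by relating $\delta$ on the labeled side to the corresponding ``peel off the innermost bracket'' operation on the unlabeled side. Concretely, let $X = X_1 X_2 \cdots X_j$ with $X_i$ the node labeled $1$, so $X_i$ has children $X_{i1}, X_{i2}$. By definition $\delta(X) = Y - Y.i$ where $Y$ is obtained from $X$ by replacing the part $X_i$ with the two consecutive parts $X_{i1} X_{i2}$ and decrementing the remaining node labels by one, so that $\ell(Y) = \ell(X) - 1$. The key observation is that $\E$ intertwines this: $\E(\delta(X)) = \E(Y) - \E(Y).i$, and since $\E(Y)$ is the unlabeled forest obtained from $\E(X)$ by splitting the node-$1$ tree into its two children in place, the element $\E(Y) - \E(Y).i$ is precisely $\E(X)$ with that one node replaced by the \emph{formal Lie bracket} $X_{i1}X_{i2} - X_{i2}X_{i1}$ of its children (node labels erased). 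One then checks that $\pi$ evaluated on this ``one bracket resolved'' unlabeled element equals $\pi(\E(X))$: this is immediate from the recursive definition of $\pi$, because $\pi$ turns every node into a commutator regardless of the order in which the nodes are processed, so resolving the innermost node first and then applying $\pi$ to what remains gives the same answer as applying $\pi$ directly. Combining these, $\pi(\E(\delta(X))) = \pi(\E(X))$, while the inductive hypothesis applied to $\delta(X)$ (a sum of forests of length $\ell(X)-1$) gives $\Delta(\delta(X)) = \pi(\E(\delta(X)))$; since $\Delta(X) = \delta^{\ell(X)}(X) = \Delta(\delta(X))$, this closes the induction.

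The step I expect to be the main obstacle is the bookkeeping in the claim that $\E(\delta(X))$ equals $\E(X)$ with the node-$1$ subtree replaced by the formal bracket of its children. One has to be careful that the P\'olya action index $i$ (which part to swap) is the same on both the labeled and unlabeled sides, and that erasing node labels genuinely commutes with the split-and-subtract operation, including the harmless relabeling of the remaining nodes. A clean way to handle this is to work with the rigorous tuple definitions $\mathbb{L}$ and $\mathbb{M}$ from the end of \autoref{TreeSection} and \autoref{UnlabeledSection}: define, for each $1 \le i \le j$, the operation on $k\mathbb{M}^\ast$ that replaces the node of a single specified tree by the bracket of its children, and verify by a one-line computation that $\pi$ is invariant under it; then the whole proof reduces to checking that $\E \circ \delta$ equals (this operation applied to the node originally labeled $1$) $\circ\, \E$, which is a direct unwinding of definitions. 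I would keep the argument at the level of ``resolving one node at a time, in the order dictated by the labels, commutes with label erasure and is absorbed by $\pi$,'' rather than writing out the permutations explicitly.
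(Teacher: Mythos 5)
Your proposal is correct and follows essentially the same route as the paper: induction on $\ell(X)$, with the inductive step resting on the observation that $\pi(\E(\delta(X)))=\pi(\E(X))$ because $\pi$ turns the node labeled $1$ into the same commutator that $\delta$ produces formally, after which $\Delta(X)=\Delta(\delta(X))$ and the inductive hypothesis finish the argument. The paper carries out the same computation by factoring $\pi(\E(X))$ over the parts of $X$ and replacing the factor $\pi(\E(X_i))$ by $\pi(\E(X_{i1}X_{i2}-X_{i2}X_{i1}))$, which is exactly your "resolving the innermost bracket" step.
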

\begin{proof} Let $X\in L$. Then
$\Delta\left(X\right)=\pi\left(\E\left(X\right)\right)$
by definition if $X$ has length zero.
Suppose otherwise that $X=X_1X_2\cdots X_j$ where $X_1,X_2,\ldots,X_j$
are trees and suppose that the node of $X$ labeled $1$ is $X_i$ so that
$X_i=\vcenter{\begin{xy}<.3cm,0cm>:
(1,2)="1"*+!U{_1};
"1";(0,1)*+!U{X_{i1}}**\dir{-};
"1";(2,1)*+!U{X_{i2}}**\dir{-};
\end{xy}}$ for some trees $X_{i1}$ and $X_{i2}$. Then
\begin{align*}
\pi\left(\E\left(X\right)\right)
&=\pi\left(\E\left(X_1\right)\right)
\cdots\pi\left(\E\left(X_i\right)\right)
\cdots\pi\left(\E\left(X_j\right)\right)\\
&=\pi\left(\E\left(X_1\right)\right)
\cdots\pi\left(\E\left(X_{i1}X_{i2}-X_{i2}X_{i1}\right)\right)
\cdots\pi\left(\E\left(X_j\right)\right)\\
&=\pi\left(\E\left(X_1\cdots\left(X_{i1}X_{i2}-X_{i2}X_{i1}\right)\cdots X_j\right)\right)\\
&=\pi\left(\E\left(\delta\left(X\right)\right)\right).
\end{align*}
Then $\pi\left(\E\left(\delta\left(X\right)\right)\right)
=\Delta\left(\delta\left(X\right)\right)=\Delta\left(X\right)$
by induction since $\delta\left(X\right)$ has shorter length than $X$.
\end{proof}

A forest $X$ is called {\em aligned} if $\overline{Z_1}<\overline{Z_2}$
for all nodes 
$\vcenter{\begin{xy}<.3cm,0cm>:
(1,2)="1";
"1";(0,1)*+!U{Z_1}**\dir{-};
"1";(2,1)*+!U{Z_2}**\dir{-};
\end{xy}}$
of $X$. Since the product of two
aligned forests is aligned,
the category $M^+$ of aligned unlabeled
forests is a subcategory of $M$ and
\[M^+=\coprod_{n\in\mathbb{N}\cup\left\{0\right\}} M^+_n
\qquad\text{and}\qquad
\mathcal{M}^+=\coprod_{n\in\mathbb{N}\cup\left\{0\right\}}\mathcal{M}^+_n\]
where $M^+_n$ is the category of aligned unlabeled forests of value $n$
and $\mathcal{M}^+$ and $\mathcal{M}^+_n$ are the sets of class sums
in $kM^+$ and $kM^+_n$.
We similarly define the corresponding sets of aligned labeled forests
$L^+$, $L_n^+$, $\mathcal{L}^+$, $\mathcal{L}_n^+$.
Our first observation about aligned forests is
that the image of $\iota$ is aligned.

\begin{lemma}\label{ImageIotaAligned}
$\iota\left(kQ\right)\subseteq k\mathcal{L}^+$
\end{lemma}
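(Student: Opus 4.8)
The plan is to deduce this immediately from \autoref{IotaEquivariant}. Recall from the discussion preceding that lemma that every path $P$ in $Q$ can be written as $P=p.\lb{a_1}{b_1}\mb{a_2}{b_2}\cdots\rb{a_l}{b_l}$ for its destination $p$ and a (possibly empty) word $\lb{a_1}{b_1}\mb{a_2}{b_2}\cdots\rb{a_l}{b_l}\in\mathcal{B}^\ast$, and that by the very definition of $\mathcal{B}$ we have $a_i<b_i$ for each $i$. So the first step is to fix such a $P$ and invoke \autoref{IotaEquivariant}, which tells us that in every term of $\iota\left(P\right)$ the node labeled $i$ has children $Z_1,Z_2$ with $\overline{Z_1}=a_i$ and $\overline{Z_2}=b_i$; since $a_i<b_i$, this node satisfies the alignment inequality $\overline{Z_1}<\overline{Z_2}$. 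As this holds for every node label $i$ (the case of an empty branch word, i.e.\ a path of length zero, being vacuous, since then $\iota\left(P\right)$ is just a composition with no nodes), every term of $\iota\left(P\right)$ is an aligned labeled forest.

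The second step is the small observation that a class sum of an aligned labeled forest lies in $k\mathcal{L}^+$: the P\'olya action only rearranges the top-level trees of a forest and leaves the internal node structure of each tree unchanged, so alignment is invariant under rearrangement, and hence $\left[X\right]\in k\mathcal{L}^+$ whenever $X\in L^+$. Combining this with the first step gives $\iota\left(P\right)\in k\mathcal{L}^+$ for every path $P$ of $Q$. Finally, since the paths of $Q$ span $kQ$ and $\iota$ is linear, it follows that $\iota\left(kQ\right)\subseteq k\mathcal{L}^+$, as claimed.

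An alternative route avoiding \autoref{IotaEquivariant} would be to note first that $k\mathcal{L}^+$ is a subalgebra of $k\mathcal{L}$ — the product $X\bullet Y$ replaces each leaf of $X$ by a tree of $Y$ of the same value, so it preserves the squash of every child of every node, whence the product of aligned forests is aligned — and then to verify alignment only on the algebra generators, namely $\iota\left(p\right)$, a composition and hence vacuously aligned, and $\iota\left(e\right)$, a single node whose two leaf-children $p_{11}<p_{12}$ are already in increasing order. I expect no real obstacle in either approach; the only point that warrants an explicit sentence is the rearrangement-invariance of alignment, which is what lets one pass from the statement about individual aligned forests to the statement about their class sums in $k\mathcal{L}^+$.
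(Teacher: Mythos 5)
Your proposal is correct, and your primary argument takes a genuinely different route from the paper's. The paper argues exactly as in your ``alternative route'': it observes that $\iota\left(e\right)$ is aligned for each edge $e$ because of the requirement $p_{11}<p_{12}$ in the definition of $\iota$ (vertices being vacuously aligned), and then invokes \autoref{IotaHomomorphism} together with the fact, noted earlier in \autoref{AlignmentSection}, that a product of aligned forests is aligned, so that the anti-homomorphism $\iota$ sends every path, hence every element of $kQ$, into $k\mathcal{L}^+$. Your main argument instead writes an arbitrary path as $p.\lb{a_1}{b_1}\cdots\rb{a_l}{b_l}$ and reads alignment of every node of every term of $\iota\left(P\right)$ directly off \autoref{IotaEquivariant}, using $a_i<b_i$ from the definition of $\mathcal{B}$, and then passes to class sums via the (correct, and worth stating) observation that alignment is invariant under the P\'olya action. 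Both proofs are short and rely on material already established before \autoref{ImageIotaAligned}, so there is no circularity; the paper's version checks alignment only on the algebra generators and leans on multiplicativity, while yours gets the stronger, more explicit statement that in each term the node labeled $i$ has child squashes exactly $a_i<b_i$, which is essentially a restatement of \autoref{IotaEquivariant} specialized to give alignment. The only step you would need to make explicit in either version is the closure fact you identify (products of aligned forests are aligned, respectively P\'olya-invariance of alignment), and you do flag it, so there is no gap.
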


\begin{proof}
We observe that $\iota\left(e\right)$
is aligned for each edge $e$ of $Q$ 
as a result of the requirement $p_{11}<p_{12}$ in
the definition of $\iota$.
Then since $\iota$ is an anti-homomorphism by \autoref{IotaHomomorphism}
it follows that the image of every element of $kQ$ under $\iota$ is aligned.
\end{proof}

\begin{lemma}\label{AlignedRendering}
If $X\in M$ and no node 
$\vcenter{\begin{xy}<.25cm,0cm>:
(1,2)="1";
"1";(0,1)*+!U{Z_1}**\dir{-};
"1";(2,1)*+!U{Z_2}**\dir{-};
\end{xy}}$
of $X$ satisfies $Z_1=Z_2\in\mathbb{N}$
then there exist $A\in kM^+$ and $Y\in\mathcal{N}+\mathcal{J}$
such that $A=X+Y$.
\end{lemma}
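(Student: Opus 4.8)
The plan is to work modulo $\mathcal{N}+\mathcal{J}$, which equals $\ker\pi$ as recorded just after the definition of $\pi$; write $\equiv$ for congruence modulo $\mathcal{N}+\mathcal{J}$, and for trees $Z_1,Z_2$ write $\langle Z_1,Z_2\rangle$ for the tree drawn as a node above its subtrees $Z_1$ and $Z_2$, extended bilinearly to $k\mathbb{M}$. Two preliminary observations drive everything. First, $\equiv$ is preserved by concatenation, because $\mathcal{N}$ and $\mathcal{J}$ are concatenation ideals. Second, for $A,A',B\in k\mathbb{M}$ with $A\equiv A'$ one has $\langle A,B\rangle\equiv\langle A',B\rangle$ and $\langle B,A\rangle\equiv\langle B,A'\rangle$, since $\pi(\langle A,B\rangle)=\pi(A)\pi(B)-\pi(B)\pi(A)$ depends only on $\pi(A)$ and $\pi(B)$. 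Iterating the second observation along a path from the root, modulo $\mathcal{N}+\mathcal{J}$ we may apply the antisymmetry relation $\langle Z_1,Z_2\rangle\equiv-\langle Z_2,Z_1\rangle$ of $N$ and the Jacobi relation $\langle Z_1,\langle Z_2,Z_3\rangle\rangle\equiv-\langle Z_2,\langle Z_3,Z_1\rangle\rangle-\langle Z_3,\langle Z_1,Z_2\rangle\rangle$ of $J$ to \emph{any} subtree of any tree of a forest (the first observation then lets us keep the other trees of the forest fixed). Finally, since the concatenation of aligned forests is again aligned, it suffices to treat each tree of $X$ separately, so we may assume $X$ is a single tree.

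I would then induct on the number of nodes of $X$. If $X$ has no nodes it is a leaf, hence aligned. Otherwise $X=\langle X_1,X_2\rangle$; the subtrees $X_1,X_2$ have fewer nodes and still contain no node both of whose children are equal natural numbers, so the inductive hypothesis gives $A_1,A_2\in kM^+$ with $X_i\equiv A_i$, where we agree to take $A_i=X_i$ whenever $X_i$ is itself a leaf. By the second observation $X\equiv\langle A_1,A_2\rangle$, and expanding bilinearly this is a linear combination of trees $\langle T,U\rangle$ with $T$ and $U$ aligned; it remains to show each such $\langle T,U\rangle$ is congruent to an element of $kM^+$.

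Here one compares $\overline{T}$ and $\overline{U}$. If $\overline{T}<\overline{U}$ then $\langle T,U\rangle$ is aligned; if $\overline{T}>\overline{U}$ then $\langle T,U\rangle\equiv-\langle U,T\rangle$ is aligned; if $\overline{T}=\overline{U}$ and $T=U$ is a leaf then $\langle T,U\rangle\equiv 0$ because $2\langle T,T\rangle\in N$ (with the convention above this last case only occurs when $X=\langle a,a\rangle$ for a leaf $a$, which the hypothesis excludes). In the remaining case $\overline{T}=\overline{U}$ and at least one of $T,U$ is a node, so after possibly one antisymmetry swap we may assume $T=\langle T_1,T_2\rangle$; since $T$ is aligned, $1\le\overline{T_1}<\overline{T_2}$, and since $\overline{U}=\overline{T}$ we get $\overline{U}=\overline{T_1}+\overline{T_2}$. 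Applying antisymmetry, then the Jacobi relation, then one more antisymmetry swap inside the second summand, yields
\[\langle\langle T_1,T_2\rangle,U\rangle\equiv\langle T_1,\langle T_2,U\rangle\rangle-\langle T_2,\langle T_1,U\rangle\rangle.\]
The inequalities $\overline{T_1}<\overline{U}$, $\overline{T_2}<\overline{U}$, $\overline{T_1}<\overline{T_2}+\overline{U}$, $\overline{T_2}<\overline{T_1}+\overline{U}$ — all immediate from $\overline{U}=\overline{T_1}+\overline{T_2}$ and $\overline{T_1},\overline{T_2}\ge 1$ — together with the alignment of $T_1$, $T_2$, $U$, show that both trees on the right are aligned. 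Summing these identities over all the terms $\langle T,U\rangle$ produces $A\in kM^+$ with $A\equiv X$, i.e.\ $A=X+Y$ with $Y\in\mathcal{N}+\mathcal{J}$, as required.

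The one point that is not routine is the choice of Jacobi expansion: a careless re-nesting of the bracket can produce trees whose root nodes are still unbalanced, forcing a recursion on the Jacobi step with no evident termination measure. The key is that $J$ is invoked \emph{only} in the situation $\overline{U}=\overline{T_1}+\overline{T_2}$, and in that situation the right-normed expansion above lands, after at most one antisymmetry swap per summand, in fully aligned trees; hence no recursion on $J$ is needed and the whole argument runs on the single induction over the number of nodes. The remaining things to check are the small list of value inequalities just displayed and the claim that $\equiv$ is compatible with the tree-forming operation and with subtree replacement, both of which are immediate once one passes through $\pi$.
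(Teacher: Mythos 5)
Your proof is correct, and it takes a genuinely different route from the paper's. The paper works top-down and locally on a forest: it selects any node $\langle Z_1,Z_2\rangle$ with $\overline{Z_1}\ge\overline{Z_2}$, adds to $X$ an explicit generator of $\mathcal{N}$ (a swap) or of $\mathcal{J}$ (replacing $\langle Z_1,\langle Z_{21},Z_{22}\rangle\rangle$ by $\langle Z_{22},\langle Z_1,Z_{21}\rangle\rangle+\langle Z_{21},\langle Z_{22},Z_1\rangle\rangle$), and then recurses on the resulting $X'$, the induction running on the number of misaligned nodes. You work bottom-up on a single tree, inducting on the number of nodes: you first align the two subtrees of the root, and only then apply one antisymmetry swap or one Leibniz-form Jacobi step $\langle\langle T_1,T_2\rangle,U\rangle\equiv\langle T_1,\langle T_2,U\rangle\rangle-\langle T_2,\langle T_1,U\rangle\rangle$, whose outputs are already aligned because $\overline{U}=\overline{T_1}+\overline{T_2}$ and $\overline{T_1},\overline{T_2}\ge 1$. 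Your ordering buys exactly the point you flag: termination is transparent, since the Jacobi rewrite is only ever applied to aligned inputs and never iterated, whereas the paper must argue that every term of its $X'$ has strictly fewer misaligned nodes, a claim that needs some care for the $\mathcal{J}$-move (for instance $\langle 3,\langle 1,2\rangle\rangle$ produces the term $\langle 2,\langle 3,1\rangle\rangle$, which again contains a misaligned node). What the paper's route buys is explicitness: $Y$ is exhibited directly as a signed sum of generators of $\mathcal{N}$ and $\mathcal{J}$, whereas you justify rewriting inside subtrees by passing through $\pi$ and invoking the stated equality $\ker\pi=\mathcal{N}+\mathcal{J}$, whose inclusion $\ker\pi\subseteq\mathcal{N}+\mathcal{J}$ is the nontrivial embedding of the free Lie algebra into $k\mathbb{N}^\ast$; since the paper asserts this you may use it, but you could avoid it altogether by noting that $N$ and $J$ are ideals of the magma algebra $k\mathbb{M}$, so subtree replacement already preserves congruence. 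One small tacit point: your parenthetical that the leaf--leaf case arises only for $X=\langle a,a\rangle$ relies on the fact that your construction never turns a tree with nodes into a leaf term (true, since your rewrites preserve the node count), but nothing hinges on it, as you also dispose of that case via $2\langle a,a\rangle\in N$ in characteristic zero.
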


\begin{proof}
If $X$ is aligned, then we can take $A=X$ and $Y=0$.
Otherwise let $Z=\vcenter{\begin{xy}<.25cm,0cm>:
(1,2)="1";
"1";(0,1)*+!U{Z_1}**\dir{-};
"1";(2,1)*+!U{Z_2}**\dir{-};
\end{xy}}$ be a node of $X$ for which
$\overline{Z_1}\ge\overline{Z_2}$.
We define an auxiliary element $X'\in kM$ as follows.
If $\overline{Z_1}>\overline{Z_2}$
then let $X'$ be the forest obtained from $X$ by exchanging
$Z_1$ with $Z_2$ so that $X+X'\in\mathcal{N}$.
Otherwise suppose that
$\overline{Z_1}=\overline{Z_2}$.
Observe that one of $Z_1$ or $Z_2$ has positive length by hypothesis.
If $\ell\left(Z_2\right)>0$ then
$Z=\vcenter{\begin{xy}<.3cm,0cm>:
(1,3)="1";
"1";(0,2)*+!U{Z_1}**\dir{-};
"1";(3,2)="2"**\dir{-};
"2";(2,1)*+!U{Z_{21}}**\dir{-};
"2";(4,1)*+!U{Z_{22}}**\dir{-};
\end{xy}}$ 
for some trees $Z_{21}$ and $Z_{22}$.
Let $X'$ be obtained from $X$ by replacing $Z$ with
$\vcenter{\begin{xy}<.3cm,0cm>:
(1,3)="1";
"1";(0,2)*+!U{Z_{22}}**\dir{-};
"1";(3,2)="2"**\dir{-};
"2";(2,1)*+!U{Z_1}**\dir{-};
"2";(4,1)*+!U{Z_{21}}**\dir{-};
\end{xy}}
+\vcenter{\begin{xy}<.3cm,0cm>:
(1,3)="1";
"1";(0,2)*+!U{Z_{21}}**\dir{-};
"1";(3,2)="2"**\dir{-};
"2";(2,1)*+!U{Z_{22}}**\dir{-};
"2";(4,1)*+!U{Z_1}**\dir{-};
\end{xy}}$
so that $X+X'\in\mathcal{J}$.
If $\ell\left(Z_1\right)>0$
then we can apply both replacements above to define
an element $X'$ such that $X+X'\in\mathcal{N}+\mathcal{J}$.

Observe that each term of $X'$ has fewer nodes
$\vcenter{\begin{xy}<.25cm,0cm>:
(1,2)="1";
"1";(0,1)*+!U{U_1}**\dir{-};
"1";(2,1)*+!U{U_2}**\dir{-};
\end{xy}}$
with $\overline{U_1}\ge\overline{U_2}$ than $X$. Then by induction
$A'=X'+Y'$ for some $A'\in kM^+$ and some $Y'\in\mathcal{N}+\mathcal{J}$.
Taking $A=-A'$ and $Y=-X-X'-Y'$ gives the result.
\end{proof}

The element $A$ in \autoref{AlignedRendering}
is called an {\em aligned rendering} of $X$.
An aligned rendering of a forest
need not be unique.
For example, the forest
$\vcenter{\begin{xy}<.2cm,0cm>:
(2,4)="1";
"1";(1,3)*+!U{\color{red}6}**\dir{-};
"1";(6,3)="2"**\dir{-};
"2";(4,2)="3"**\dir{-};
"3";(3,1)*+!U{\color{red}1}**\dir{-};
"3";(5,1)*+!U{\color{red}2}**\dir{-};
"2";(7,2)*+!U{\color{red}3}**\dir{-};
\end{xy}}$
has aligned renderings
\begin{equation}\label{TwoAlignedRenderings}
\vcenter{\begin{xy}<.2cm,0cm>:
(2,4)="1";
"1";(1,3)*+!U{\color{red}3}**\dir{-};
"1";(6,3)="2"**\dir{-};
"2";(4,2)="3"**\dir{-};
"3";(3,1)*+!U{\color{red}1}**\dir{-};
"3";(5,1)*+!U{\color{red}2}**\dir{-};
"2";(7,2)*+!U{\color{red}6}**\dir{-};
\end{xy}}
-\vcenter{\begin{xy}<.2cm,0cm>:
(4,3)="1";
"1";(2,2)="2"**\dir{-};
"2";(1,1)*+!U{\color{red}1}**\dir{-};
"2";(3,1)*+!U{\color{red}2}**\dir{-};
"1";(6,2)="3"**\dir{-};
"3";(5,1)*+!U{\color{red}3}**\dir{-};
"3";(7,1)*+!U{\color{red}6}**\dir{-};
\end{xy}}
\quad\text{and}\quad
\vcenter{\begin{xy}<.2cm,0cm>:
(2,4)="1";
"1";(1,3)*+!U{\color{red}2}**\dir{-};
"1";(6,3)="2"**\dir{-};
"2";(4,2)="3"**\dir{-};
"3";(3,1)*+!U{\color{red}1}**\dir{-};
"3";(5,1)*+!U{\color{red}3}**\dir{-};
"2";(7,2)*+!U{\color{red}6}**\dir{-};
\end{xy}}
-\vcenter{\begin{xy}<.2cm,0cm>:
(4,3)="1";
"1";(2,2)="2"**\dir{-};
"2";(1,1)*+!U{\color{red}1}**\dir{-};
"2";(3,1)*+!U{\color{red}3}**\dir{-};
"1";(6,2)="3"**\dir{-};
"3";(5,1)*+!U{\color{red}2}**\dir{-};
"3";(7,1)*+!U{\color{red}6}**\dir{-};
\end{xy}}
-\vcenter{\begin{xy}<.2cm,0cm>:
(2,4)="1";
"1";(1,3)*+!U{\color{red}1}**\dir{-};
"1";(6,3)="2"**\dir{-};
"2";(4,2)="3"**\dir{-};
"3";(3,1)*+!U{\color{red}2}**\dir{-};
"3";(5,1)*+!U{\color{red}3}**\dir{-};
"2";(7,2)*+!U{\color{red}6}**\dir{-};
\end{xy}}
+\vcenter{\begin{xy}<.2cm,0cm>:
(4,3)="1";
"1";(2,2)="2"**\dir{-};
"2";(1,1)*+!U{\color{red}2}**\dir{-};
"2";(3,1)*+!U{\color{red}3}**\dir{-};
"1";(6,2)="3"**\dir{-};
"3";(5,1)*+!U{\color{red}1}**\dir{-};
"3";(7,1)*+!U{\color{red}6}**\dir{-};
\end{xy}}
\end{equation}
obtained by applying the replacements in
\autoref{AlignedRendering} to different nodes.

\section{The path associated to a forest}
Continuing the example at the beginning of \autoref{QuiverSection}
we recall that $Q$ was constructed on the basis of the unique
factorization of labeled forests.
However, when mapping the path algebra
of $Q$ back to the algebra of labeled forests,
we replaced the factors in such a factorization with their
P\'olya classes, which are more useful
in light of our interest in
$\Sigma\left(\sym{n}\right)$ but which break the 
factorization, as the example shows.
Specifically we associated
the path $P=p.\lb{1}{3}\rb{1}{2}$ to the class
$\left[\vcenter{\begin{xy}<.2cm,0cm>:
(2,3)="1"*+!U{_1};
"1";(1,2)*+!U{\color{red}1}**\dir{-};
"1";(4,2)="2"*+!U{_2}**\dir{-};
"2";(3,1)*+!U{\color{red}1}**\dir{-};
"2";(5,1)*+!U{\color{red}2}**\dir{-};
(6,3)*+!U{\color{red}3};
\end{xy}}\right]$
where $p$ is the partition $34$
and we found that $\iota\left(P\right)
=\left[\vcenter{\begin{xy}<.2cm,0cm>:
(2,3)="1"*+!U{_1};
"1";(1,2)*+!U{\color{red}1}**\dir{-};
"1";(4,2)="2"*+!U{_2}**\dir{-};
"2";(3,1)*+!U{\color{red}1}**\dir{-};
"2";(5,1)*+!U{\color{red}2}**\dir{-};
(6,3)*+!U{\color{red}3};
\end{xy}}\right]
+\left[\vcenter{\begin{xy}<.2cm,0cm>:
(2,2)="1"*+!U{_1};
"1";(1,1)*+!U{\color{red}1}**\dir{-};
"1";(3,1)*+!U{\color{red}3}**\dir{-};
(5,2)="1"*+!U{_2};
"1";(4,1)*+!U{\color{red}1}**\dir{-};
"1";(6,1)*+!U{\color{red}2}**\dir{-};
\end{xy}}\right]$.
Applying the same procedure instead to
$\left[\vcenter{\begin{xy}<.2cm,0cm>:
(2,2)="1"*+!U{_1};
"1";(1,1)*+!U{\color{red}1}**\dir{-};
"1";(3,1)*+!U{\color{red}3}**\dir{-};
(5,2)="1"*+!U{_2};
"1";(4,1)*+!U{\color{red}1}**\dir{-};
"1";(6,1)*+!U{\color{red}2}**\dir{-};
\end{xy}}\right]$ results in the same path $P$,
so again the factorization fails.
Motivated by this example, the purpose of this section is to precisely define
the path associated to a labeled forest and to calculate
its image under $\iota$. In \autoref{FactorizationSection}
we show how the failure of factorization in $\mathcal{L}$
can be resolved.

Consider the following transformations of a labeled forest.
\begin{enumerate}
\item\label{MoveOne} exchanging two subtrees $U$ and $V$
for which $\overline{U}=\overline{V}$
and the node labels
of the parents of $U$ and $V$, if they exist, are {\em smaller}
than the node labels of $U$ and $V$, if they exist
\item\label{MoveTwo} exchanging two parts of the forest
\end{enumerate}
Note that both moves produce another labeled forest.
We write $X\sim Y$
for $X,Y\in L$ if $Y$ can be obtained from $X$
by applying a sequence of moves \autoref{MoveOne} or \autoref{MoveTwo}.
Then $\sim$ is an equivalence relation on $L$
that induces an equivalence relation on $\mathcal{L}$.
Note that if $X\sim Y$
then $X$ is aligned if and only
if $Y$ is aligned. For example, the forests
\begin{equation}\label{SimExample}
\vcenter{\begin{xy}<.2cm,0cm>:
(2,4)="1"*+!U{_1};
"1";(1,3)*+!U{\color{red}4}**\dir{-};
"1";(6,3)="2"*+!U{_2}**\dir{-};
"2";(4,2)="4"*+!U{_4}**\dir{-};
"4";(3,1)*+!U{\color{red}1}**\dir{-};
"4";(5,1)*+!U{\color{red}2}**\dir{-};
"2";(8,2)="3"*+!U{_3}**\dir{-};
"3";(7,1)*+!U{\color{red}1}**\dir{-};
"3";(9,1)*+!U{\color{red}3}**\dir{-};
\end{xy}}\qquad
\vcenter{\begin{xy}<.2cm,0cm>:
(2,5)="1"*+!U{_1};
"1";(1,4)*+!U{\color{red}4}**\dir{-};
"1";(4,4)="2"*+!U{_2}**\dir{-};
"2";(3,3)*+!U{\color{red}3}**\dir{-};
"2";(6,3)="3"*+!U{_3}**\dir{-};
"3";(5,2)*+!U{\color{red}1}**\dir{-};
"3";(8,2)="4"*+!U{_4}**\dir{-};
"4";(7,1)*+!U{\color{red}1}**\dir{-};
"4";(9,1)*+!U{\color{red}2}**\dir{-};
\end{xy}}\qquad
\vcenter{\begin{xy}<.2cm,0cm>:
(4,4)="1"*+!U{_1};
"1";(2,3)="3"*+!U{_3}**\dir{-};
"3";(1,2)*+!U{\color{red}1}**\dir{-};
"3";(3,2)*+!U{\color{red}3}**\dir{-};
"1";(8,3)="2"*+!U{_2}**\dir{-};
"2";(6,2)="4"*+!U{_4}**\dir{-};
"4";(5,1)*+!U{\color{red}1}**\dir{-};
"4";(7,1)*+!U{\color{red}2}**\dir{-};
"2";(9,2)*+!U{\color{red}4}**\dir{-};
\end{xy}}\qquad
\vcenter{\begin{xy}<.2cm,0cm>:
(6,4)="1"*+!U{_1};
"1";(2,3)="3"*+!U{_3}**\dir{-};
"3";(1,2)*+!U{\color{red}1}**\dir{-};
"3";(4,2)="4"*+!U{_4}**\dir{-};
"4";(3,1)*+!U{\color{red}1}**\dir{-};
"4";(5,1)*+!U{\color{red}2}**\dir{-};
"1";(8,3)="2"*+!U{_2}**\dir{-};
"2";(7,2)*+!U{\color{red}3}**\dir{-};
"2";(9,2)*+!U{\color{red}4}**\dir{-};
\end{xy}}
\end{equation}
are related by $\sim$.

As in the example at the beginning of this section,
we can associate a path in $Q$ to an aligned labeled
forest through the map
$\p:L^+\to kQ$ defined as follows.
Suppose that $X$ is an aligned labeled forest
and let $p$ be the composition $\overline{X}$ regarded
as a vertex of $Q$. Let
$a_1,b_1,a_2,b_2,\ldots,a_l,b_l\in\mathbb{N}$
be such that the node
$\vcenter{\begin{xy}<.3cm,0cm>:
(2,3)="1"*+!U{_i};
"1";(1,2)*+!U{Z_1}**\dir{-};
"1";(3,2)*+!U{Z_2}**\dir{-};
\end{xy}}$
of $X$ satisfies $\s{Z_1}=a_i$
and $\s{Z_2}=b_i$ for all $1\le i\le l$
where $l=\ell\left(X\right)$.
Then we define $\p\left(X\right)
=p.\lb{a_1}{b_1}\mb{a_2}{b_2}\cdots\rb{a_l}{b_l}$.

We observe that applying $\p$ to forests
related to one another by $\sim$ produces the same path.
In particular, applying $\p$ to forests
in the same P\'olya class produces
the same path. Therefore we can define
$\p\left[X\right]=\p\left(X\right)$ for all $X\in L^+$.
Finally, applying $\p$ to any terms of
the image under $\iota$ of any path $P$
produces the same path
by \autoref{IotaEquivariant},
which must therefore be $P$.
For example, if $X$ is any of the forests in \autoref{SimExample}
then $\p\left[X\right]=p.\lb{4}{7}\mb{3}{4}\mb{1}{3}\rb{1}{2}$
where $p$ is the partition containing the single part eleven.

The map $\p$ can also be formulated recursively as follows.
Suppose again that $X$ is an aligned labeled forest.
If $X$ has length zero, then we can regard $X$ as a vertex of $Q$
and take $\p\left(X\right)=X$.
Otherwise we define
$\p\left(X\right)=\p\left(Y\right)e$
where $X',Y$ are as in \autoref{UniqueFactorization}
and $e$ is the edge of $Q$ from
$\f{X'}$ to $\s{X'}$.
Note that $\left[X'\right]=\iota\left(e\right)$
so that $\iota$ and $\p$
are inverses of one another when restricted to elements of length one.
The same is true of elements of length zero.
The following lemma deals with the composition $\iota\circ\p$ in general.

\begin{lemma}\label{IotaWiggle} If $X\in L^+$ then
$\displaystyle\iota\left(\p\left[X\right]\right)
=\sum_{\left[U\right]\sim\left[X\right]}\left[U\right]$.\end{lemma}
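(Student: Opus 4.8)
The plan is to induct on the length $l=\ell(X)$, peeling off the node of largest label from $X$ and using that $\iota$ is a homomorphism of $k\mathcal{B}^{\ast}$-modules (\autoref{IotaDHomomorphism}).

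First I would reduce to a statement about labeled forests. Because move \autoref{MoveTwo} realizes all rearrangements of parts, the $\sim$-class of $X$ in $L$ is a union of P\'olya orbits, so, viewing $k\mathcal{L}$ as a subspace of $kL$, the right-hand side equals $\sum_{U\sim X}U$, the sum of all labeled forests $\sim$-equivalent to $X$, each with coefficient one. Hence it suffices to prove $\iota(\p[X])=\sum_{U\sim X}U$. For $l=0$ the forest $X$ is a composition, $\p(X)=\overline{X}$ is a vertex of $Q$, $\iota(\overline{X})$ is the corresponding class sum, and $\{U:U\sim X\}$ is exactly the P\'olya orbit of $X$ (moves \autoref{MoveOne} between leaves being trivial), so both sides agree.

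Now suppose $l\ge1$. The node of $X$ labeled $l$ has the maximal label, so both its children must be leaves, and by alignment they are $a_l<b_l$, the last pair recorded by $\p$. Let $X^{-}$ be the forest obtained from $X$ by contracting this node to a single leaf $a_l+b_l$; then $X^{-}\in L^{+}$, $\ell(X^{-})=l-1$, $\overline{X^{-}}=\overline{X}$, and the squash data of every other node is unchanged, so $\p(X)=\p(X^{-}).\bb{a_l}{b_l}$. Combining \autoref{IotaDHomomorphism} with the inductive hypothesis applied to $X^{-}$,
\[
\iota(\p[X])=\iota(\p[X^{-}]).\bb{a_l}{b_l}=\Bigl(\sum_{V\sim X^{-}}V\Bigr).\bb{a_l}{b_l}=\sum_{V\sim X^{-}}\;\sum_{\substack{c\text{ a leaf of }V\\ \overline{c}=a_l+b_l}}V_c,
\]
where $V_c$ denotes $V$ with the leaf $c$ replaced by a node labeled $l$ with leaves $a_l,b_l$.

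It then remains to show that $(V,c)\mapsto V_c$ is a bijection from the set of pairs occurring above onto $\{U:U\sim X\}$. Injectivity is immediate: from $U=V_c$ one recovers $V$ by contracting the node of $U$ labeled $l$, and $c$ as its position. For the two containments I would use the key observation that along any chain of moves issuing from an aligned forest the node of largest label remains a node with two leaves $a_l,b_l$ and can only be transported bodily --- its children cannot be exchanged, since any partner for such an exchange would need a label larger than $l$. Hence contracting the top node commutes with moves \autoref{MoveOne} and \autoref{MoveTwo}: a chain of moves from $X$ to $U$ contracts (with some steps becoming trivial) to a chain from $X^{-}$ to $U^{-}$, so $U=(U^{-})_c$ with $U^{-}\sim X^{-}$; conversely a chain from $X^{-}$ to $V$ lifts to a chain from $X$ to $V_{c^{\ast}}$ for the leaf $c^{\ast}$ tracked through the lift, and $V_{c^{\ast}}\sim V_c$ for every other leaf $c$ of value $a_l+b_l$ by a single move \autoref{MoveOne}. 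The hard part will be this last paragraph: one must check, in each configuration --- the top node exchanged whole, nested inside a larger exchanged subtree, or with one of its leaves involved --- that contracting and reintroducing the top node is compatible with the conditions on parents' labels in move \autoref{MoveOne}, so that the move chains genuinely pass between $X$ and $X^{-}$.
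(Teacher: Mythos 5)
Your argument is correct, but it runs the induction in the mirror-image direction to the paper's. The paper peels off the node labeled $1$: by \autoref{UniqueFactorization} one writes $X=X'\bullet Y$, so $\p\left[X\right]=\p\left[Y\right]e$ with $\iota\left(e\right)=\left[X'\right]$, and the anti-homomorphism property (\autoref{IotaHomomorphism}) gives $\iota\left(\p\left[X\right]\right)=\left[X'\right]\bullet\iota\left(\p\left[Y\right]\right)$; the converse containment is then almost free, because the node labeled $1$ is the root of a part and any exchange displacing it must swap two whole parts, hence stays inside the P\'olya class. You instead peel off the node labeled $l=\ell\left(X\right)$, writing $\p\left[X\right]=\p\left[X^{-}\right].\bb{a_l}{b_l}$ and invoking the $k\mathcal{B}^\ast$-equivariance of $\iota$ (\autoref{IotaDHomomorphism}); the price is the bookkeeping you flag at the end, namely that contracting and reinserting the top node is compatible with chains of moves \autoref{MoveOne} and \autoref{MoveTwo}. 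That verification does go through, for exactly the reason you give: since $l$ is the maximal label, every parent-label inequality in move \autoref{MoveOne} involving the node labeled $l$ is automatic, so a move in a chain from $X$ descends (possibly to the identity) after contraction, a move in a chain from $X^{-}$ lifts once one tracks which leaf of value $a_l+b_l$ carries the reinserted node, that node can then be transported to any other leaf of the same value by a single move \autoref{MoveOne}, and exchanges touching a child of the node labeled $l$ are either forbidden (a partner of positive length would need a label exceeding $l$) or are the identity (two equal leaves). So your bijection $\left(V,c\right)\mapsto V_c$ is sound; the trade-off is that the paper's choice of the minimal label makes the converse containment essentially immediate, while your choice of the maximal label makes the algebraic step a one-line application of \autoref{IotaDHomomorphism} at the cost of a longer, though routine, combinatorial check.
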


\begin{proof} As mentioned above $\iota\left(\p\left[X\right]\right)
=\left[X\right]$ if $X$ has length zero or one.
Otherwise let $X',Y$ be as in \autoref{UniqueFactorization}.
Assuming by induction that
$\iota\left(\p\left[Y\right]\right)
=\sum_{\left[V\right]\sim\left[Y\right]}\left[V\right]$ we have
\begin{equation}\label{IotaFactorization}
\iota\left(\p\left[X\right]\right)
=\iota\left(\p\left[X'\bullet Y\right]\right)
=\left[\vcenter{\begin{xy}<.3cm,0cm>:
(0,2)*+!UR{{\color{red}x_1x_2}\cdots{\color{red}x_{i-1}}};
(2,2)="1"*+!U{_1};
"1";(1,1)*+!U{{\color{red}x_{i1}}}**\dir{-};
"1";(3,1)*+!U{{\color{red}x_{i2}}}**\dir{-};
(4,2)*+!UL{{\color{red}x_{i+1}}\cdots{\color{red}x_j}};
\end{xy}}\right]
\bullet\sum_{\left[V\right]\sim\left[Y\right]}\left[V\right].
\end{equation}
Note that all the terms $\left[U\right]$ of \autoref{IotaFactorization}
satisfy $\left[U\right]\sim\left[X\right]$.
Conversely, suppose that $\left[U\right]$ is such that
$\left[U\right]\sim\left[X\right]$.
We can assume that $U$ can be obtained from $X$ by exchanging
a single pair of subtrees of the same squash since $\sim$
is the reflexive and transitive closure of the set of all such pairs
of forests.
If the exchange moves the node labeled $1$ then it must exchange
it with another part of $X$
since $1$ is the smallest node label in $X$.
Then $\left[X\right]=\left[U\right]$. Otherwise $U=X'\bullet V$
for some forest $V$ such that $V\sim Y$.
This shows that $\left[U\right]$ is a term of \autoref{IotaFactorization}.
\end{proof}

\section{Proof of the quiver}\label{FactorizationSection}
In this section we prove that $Q_n$ is the ordinary quiver of the algebra
$\Sigma\left(\sym{n}\right)$. We begin with a construction that produces
an element $\F\left(X\right)\in L^+$
such that $\E\left(\F\left(X\right)\right)=X$
for all $X\in M^+$.
While this can be done by simply labeling the nodes of
$X$ in any legitimate way,
the labeling provided by $\F$ is convenient
in the proofs of the following results.
If $X$ has length zero, then $X$ is also in $L^+$ and
we take $\F\left(X\right)=X$.
Otherwise suppose that $X=X_1X_2\cdots X_j$
where $X_1,X_2,\ldots,X_j$ are trees.
Let $i$ be minimal such that $\ell\left(X_i\right)>0$
and let $X_{i1},X_{i2}$ be trees such that
$X_i=\vcenter{\begin{xy}<.3cm,0cm>:
(1,2)="1"*+!U{_i};
"1";(0,1)*+!U{X_{i1}}**\dir{-};
"1";(2,1)*+!U{X_{i2}}**\dir{-};
\end{xy}}$. Let
$Y$ be the forest obtained from
$X_1X_2\cdots X_{i-1}X_{i1}X_{i2}X_{i+1}\cdots X_j$
by reducing all the node labels by one
and write $x_1x_2\cdots x_{i-1}x_{i1}x_{i2}x_{i+1}\cdots x_j=\overline{Y}$.
Then defining
\[\F\left(X\right)=
\left(\vcenter{\begin{xy}<.3cm,0cm>:
(0,2)*+!UR{{\color{red}x_1x_2}\cdots{\color{red}x_{i-1}}};
(2,2)="1"*+!U{_1};
"1";(1,1)*+!U{{\color{red}x_{i1}}}**\dir{-};
"1";(3,1)*+!U{{\color{red}x_{i2}}}**\dir{-};
(4,2)*+!UL{{\color{red}x_{i+1}}\cdots{\color{red}x_j}};
\end{xy}}\right)\bullet\F\left(Y\right)\]
we have $\E\left(\F\left(X\right)\right)=X$ by induction.
Note that the nodes of any part of $\F\left(X\right)$ are labeled in prefix
order and are smaller than those in the following part. For example, if
\[X=\vcenter{\begin{xy}<.2cm,0cm>:
(6,4)="1";
"1";(2,3)="2"**\dir{-};
"2";(1,2)*+!U{\color{red}1}**\dir{-};
"2";(4,2)="3"**\dir{-};
"3";(3,1)*+!U{\color{red}1}**\dir{-};
"3";(5,1)*+!U{\color{red}2}**\dir{-};
"1";(8,3)="4"**\dir{-};
"4";(7,2)*+!U{\color{red}1}**\dir{-};
"4";(9,2)*+!U{\color{red}4}**\dir{-};
(13,4)="5";
"5";(11,3)="6"**\dir{-};
"6";(10,2)*+!U{\color{red}1}**\dir{-};
"6";(12,2)*+!U{\color{red}2}**\dir{-};
"5";(17,3)="7"**\dir{-};
"7";(15,2)="8"**\dir{-};
"8";(14,1)*+!U{\color{red}1}**\dir{-};
"8";(16,1)*+!U{\color{red}2}**\dir{-};
"7";(18,2)*+!U{\color{red}4}**\dir{-};
\end{xy}}
\qquad\text{then}\qquad
\F\left(X\right)=\vcenter{\begin{xy}<.2cm,0cm>:
(6,4)="1"*+!U{_1};
"1";(2,3)="2"*+!U{_2}**\dir{-};
"2";(1,2)*+!U{\color{red}1}**\dir{-};
"2";(4,2)="3"*+!U{_3}**\dir{-};
"3";(3,1)*+!U{\color{red}1}**\dir{-};
"3";(5,1)*+!U{\color{red}2}**\dir{-};
"1";(8,3)="4"*+!U{_4}**\dir{-};
"4";(7,2)*+!U{\color{red}1}**\dir{-};
"4";(9,2)*+!U{\color{red}4}**\dir{-};
(13,4)="5"*+!U{_5};
"5";(11,3)="6"*+!U{_6}**\dir{-};
"6";(10,2)*+!U{\color{red}1}**\dir{-};
"6";(12,2)*+!U{\color{red}2}**\dir{-};
"5";(17,3)="7"*+!U{_7}**\dir{-};
"7";(15,2)="8"*+!U{_8}**\dir{-};
"8";(14,1)*+!U{\color{red}1}**\dir{-};
"8";(16,1)*+!U{\color{red}2}**\dir{-};
"7";(18,2)*+!U{\color{red}4}**\dir{-};
\end{xy}}.\]

Next we introduce a total order $<$ on the set of unlabeled
trees. Let $X$ and $Y$ be unlabeled trees.
If $\ell\left(X\right)>0$ then let $X_1,X_2$ be trees such that
$X=\vcenter{\begin{xy}<.25cm,0cm>:
(1,2)="1";
"1";(0,1)*+!U{X_1}**\dir{-};
"1";(2,1)*+!U{X_2}**\dir{-};
\end{xy}}$
and similarly for $Y$.
We write $X<Y$ if one
of the following conditions holds.
\begin{enumerate}
\item $\overline{X}<\overline{Y}$
\item $\overline{X}=\overline{Y}$ and $\ell\left(X\right)>\ell\left(Y\right)$
\item\label{Condition3}
$\overline{X}=\overline{Y}$ and $\ell\left(X\right)=\ell\left(Y\right)$
and $X_1<Y_1$
\item\label{Condition4}
$\overline{X}=\overline{Y}$ and $\ell\left(X\right)=\ell\left(Y\right)$
and $X_1=Y_1$ and $X_2<Y_2$
\end{enumerate}
Note that in situations \autoref{Condition3} and \autoref{Condition4}
the trees $X_1,X_2,Y_1,Y_2$ have length shorter than
$\ell\left(X\right)=\ell\left(Y\right)$ and can therefore be compared
by induction.
For example, the following trees appear in increasing order.
\begin{multline*}
\vcenter{\begin{xy}<.2cm,0cm>:
(2,5)="1";
"1";(1,4)*+!U{\color{red}1}**\dir{-};
"1";(4,4)="2"**\dir{-};
"2";(3,3)*+!U{\color{red}2}**\dir{-};
"2";(6,3)="3"**\dir{-};
"3";(5,2)*+!U{\color{red}3}**\dir{-};
"3";(8,2)="4"**\dir{-};
"4";(7,1)*+!U{\color{red}1}**\dir{-};
"4";(9,1)*+!U{\color{red}3}**\dir{-};
\end{xy}}
<
\vcenter{\begin{xy}<.2cm,0cm>:
(4,4)="1";
"1";(2,3)="2"**\dir{-};
"2";(1,2)*+!U{\color{red}1}**\dir{-};
"2";(3,2)*+!U{\color{red}2}**\dir{-};
"1";(6,3)="3"**\dir{-};
"3";(5,2)*+!U{\color{red}3}**\dir{-};
"3";(8,2)="4"**\dir{-};
"4";(7,1)*+!U{\color{red}1}**\dir{-};
"4";(9,1)*+!U{\color{red}3}**\dir{-};
\end{xy}}
<
\vcenter{\begin{xy}<.2cm,0cm>:
(2,4)="1";
"1";(1,3)*+!U{\color{red}3}**\dir{-};
"1";(6,3)="2"**\dir{-};
"2";(4,2)="3"**\dir{-};
"3";(3,1)*+!U{\color{red}1}**\dir{-};
"3";(5,1)*+!U{\color{red}2}**\dir{-};
"2";(8,2)="4"**\dir{-};
"4";(7,1)*+!U{\color{red}1}**\dir{-};
"4";(9,1)*+!U{\color{red}3}**\dir{-};
\end{xy}}
<
\vcenter{\begin{xy}<.2cm,0cm>:
(2,5)="1";
"1";(1,4)*+!U{\color{red}3}**\dir{-};
"1";(4,4)="2"**\dir{-};
"2";(3,3)*+!U{\color{red}3}**\dir{-};
"2";(6,3)="3"**\dir{-};
"3";(5,2)*+!U{\color{red}1}**\dir{-};
"3";(8,2)="4"**\dir{-};
"4";(7,1)*+!U{\color{red}1}**\dir{-};
"4";(9,1)*+!U{\color{red}2}**\dir{-};
\end{xy}}
<
\vcenter{\begin{xy}<.2cm,0cm>:
(4,4)="1";
"1";(2,3)="2"**\dir{-};
"2";(1,2)*+!U{\color{red}1}**\dir{-};
"2";(3,2)*+!U{\color{red}3}**\dir{-};
"1";(6,3)="3"**\dir{-};
"3";(5,2)*+!U{\color{red}1}**\dir{-};
"3";(8,2)="4"**\dir{-};
"4";(7,1)*+!U{\color{red}2}**\dir{-};
"4";(9,1)*+!U{\color{red}3}**\dir{-};
\end{xy}}
\end{multline*}

The relation $<$ induces the lexicographic order on unlabeled forests,
which we also denote by $<$.
This allows us to introduce the notion of a
{\em nondecreasing representative} $X\in M$
of its class $\left[X\right]$,
namely the element whose parts appear in nondecreasing order
from left to right.
The most important property of the nondecreasing representative
is given in the following lemma.

\begin{lemma}\label{FMinimum} If $X\in M^+$ is nondecreasing
and $Z\in L^+$ is such that
$Z\sim\F\left(X\right)$ but
$\left[Z\right]\ne\left[\F\left(X\right)\right]$
then $\E\left(Z\right)<X$.
\end{lemma}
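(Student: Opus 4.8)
\emph{Plan.} I would argue by induction on $\ell=\ell(X)$, proving the slightly stronger statement in which $X$ is an arbitrary element of $M^+$ and the conclusion asserts that the nondecreasing representative of $[\E(Z)]$ is strictly smaller than the nondecreasing representative of $[X]$; when $X$ is nondecreasing the latter is $X$ itself. The base case $\ell\le 1$ is vacuous: a short squash count, using that the value of an aligned node strictly exceeds the value of each leaf beneath it, shows that $\F(X)$ admits no nontrivial move of type~\autoref{MoveOne} between two \emph{proper} subtrees, so every $W\sim\F(X)$ is a rearrangement of the parts of $\F(X)$ and the hypothesis $[Z]\ne[\F(X)]$ is never met.

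For the inductive step I would first note that, for $W\in L^+$, one has $W\sim\F(X)$ if and only if $\p[W]=\p[\F(X)]$: the forward implication is already in the text, and the converse follows by applying \autoref{IotaWiggle} to $W$ and to $\F(X)$ and comparing supports. In particular, by \autoref{IotaEquivariant}, the node labelled $i$ of any such $W$ has left and right subtrees of the squashes dictated by $\p[\F(X)]$. Write $X_{i_0}=[A,B]$ for the first part of $X$ of positive length, with $\overline A<\overline B$ by alignment; recall from the definitions of $\F$ and $\p$ that $\F(X)=X'\bullet\F(X_\circ)$ and $\p[\F(X)]=\p[\F(X_\circ)]\cdot e$, where $X_\circ$ is $X$ with $X_{i_0}$ replaced by the two parts $A,B$, the one-node forest $X'$ is as in the definition of $\F$, and $e$ is the edge of $Q$ combining the parts of values $\overline A,\overline B$. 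Since $1$ is minimal among node labels, node $1$ of $Z$ has no parent, so it is the root of a part $\Theta=[\Theta_L,\Theta_R]$ with $\overline{\Theta_L}=\overline A$, $\overline{\Theta_R}=\overline B$; by \autoref{UniqueFactorization}, $Z=Z''\bullet Z_\circ$, where $Z''$ is a one-node forest whose node has children of squashes $\overline A,\overline B$ and $Z_\circ$ is $Z$ with node $1$ removed. Comparing the two factorizations shows that the last edges of $\p[Z]=\p[\F(X)]$ agree (both combine the parts of values $\overline A,\overline B$), hence $\p[Z_\circ]=\p[\F(X_\circ)]$; thus $Z_\circ\in L^+$, $Z_\circ\sim\F(X_\circ)$, and, since $\ell(X_\circ)=\ell-1$, the inductive hypothesis applies to $X_\circ$. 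Finally, $\E(Z)$ is obtained from $\E(Z_\circ)$ by joining the parts $\E(\Theta_L),\E(\Theta_R)$ into the single part $[\E(\Theta_L),\E(\Theta_R)]$, exactly as $X$ is obtained from $X_\circ$ by joining $A,B$ into $[A,B]$.

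Then I would split into two cases. If $[Z_\circ]=[\F(X_\circ)]$, then $\Theta_L$ equals $\F(A)$ or a leaf of value $\overline A$ and $\Theta_R$ equals $\F(B)$ or a leaf of value $\overline B$; when $\Theta_L=\F(A)$ and $\Theta_R=\F(B)$ --- in particular whenever $A$ and $B$ are leaves --- a direct check shows $Z$ is a rearrangement of the parts of $\F(X)$, contradicting $[Z]\ne[\F(X)]$, and otherwise one of the non-leaf trees among $A,B$ occurs as a separate part of $\E(Z)$, of value $\overline A$ or $\overline B$, hence strictly below $\overline A+\overline B\le\overline{X_{i_0}}$; since a non-leaf tree is strictly below every leaf of the same value, while every part of $X$ of that value is a leaf, the nondecreasing representative of $[\E(Z)]$ already drops below $X$ in the block of parts of that value. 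If instead $[Z_\circ]\ne[\F(X_\circ)]$, the inductive hypothesis puts the sorted part-list of $\E(Z_\circ)$ strictly below that of $X_\circ$, first at some value $v^\ast$, and one transports this difference across the join, distinguishing the position of $v^\ast$ relative to $\overline A$ and $\overline B$ and using $\overline A<\overline B<\overline A+\overline B$ together with the rules of the total order on trees (at equal value the longer tree is smaller; otherwise compare left subtrees, then right subtrees).

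The main obstacle, I expect, is precisely this transport when $v^\ast\ge\overline A$: there one deletes $\E(\Theta_L)$ from a sorted block of value $\overline A$ and $A$ from the corresponding block of $X_\circ$ --- blocks forced to be literally equal when $v^\ast>\overline A$ --- and this can flip the inequality unless one controls how $\E(\Theta_L)$ and $\E(\Theta_R)$ compare with $A$ and $B$, i.e.\ unless one shows that any discrepancy there is itself a lower-value witness for $\E(Z)<X$ through the comparison of the node-$1$ parts $[\E(\Theta_L),\E(\Theta_R)]$ and $[A,B]$. This ultimately rests on the leftmost prefix-order labelling used by $\F$, which attaches the later nodes to node $1$'s left subtree as soon as the path $\p[\F(X)]$ allows, making $A$ the largest value-$\overline A$ subtree node $1$ can carry; I expect the cleanest way to make it rigorous is to strengthen the inductive statement to record which entries of the two sorted part-lists are matched, or to run the induction on $\ell$ in tandem with a secondary induction on node $1$'s child subtrees.
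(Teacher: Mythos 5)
Your reduction is set up soundly as far as it goes: the equivalence $W\sim\F(X)\Leftrightarrow\p[W]=\p[\F(X)]$ does follow from \autoref{IotaWiggle}, peeling off the node labeled $1$ via \autoref{UniqueFactorization} does give $Z=Z''\bullet Z_\circ$ with $Z_\circ\sim\F(X_\circ)$, and your analysis of the case $[Z_\circ]=[\F(X_\circ)]$ is essentially correct. But the decisive case $[Z_\circ]\neq[\F(X_\circ)]$ is exactly the step you leave open, and the strengthened induction hypothesis you propose to run it on (arbitrary $X\in M^+$, comparing nondecreasing representatives) is false, so the induction cannot be closed as stated. Writing unlabeled trees as in \autoref{UnlabeledSection}, take $X=(1,3)\,(1,2)\in M^+$; then $\F(X)$ labels the root of $(1,3)$ with $1$ and the root of $(1,2)$ with $2$. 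Exchanging the leaf $3$ of the first part with the entire second part is an instance of move \autoref{MoveOne} (equal squashes, and the only parent label involved, $1$, is smaller than the only root label involved, $2$); the resulting $Z$ is aligned, satisfies $Z\sim\F(X)$ and $[Z]\neq[\F(X)]$, and has $\E(Z)=(1,(1,2))\,3$. The nondecreasing representative of $[\E(Z)]$ is $3\,(1,(1,2))$, while that of $[X]$ is $(1,2)\,(1,3)$; since $(1,2)<3$ (equal squash, greater length), the representative of $[\E(Z)]$ is strictly \emph{greater} than that of $[X]$. (These are the two classes in the example opening \autoref{QuiverSection}, read in the opposite direction.) Since the $X_\circ$ arising in your inductive step is in general not nondecreasing, you genuinely need a statement of this kind, and the example shows that the nondecreasing hypothesis cannot simply be traded for sorting the conclusion; the ``transport'' difficulty you flag is therefore not a technical loose end but the entire content of the lemma, and it is not proved in your proposal.

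For comparison, the paper's proof is not an induction on length at all. It views $Z$ as a reassembly of the pieces listed in \autoref{Steckdosen}, so that the passage from $\F(X)$ to $Z$ becomes a permutation of subtrees of equal squash; it decomposes this permutation into disjoint cycles and reduces to a single cycle, which contains at most one leaf. The hypothesis that the parts of $X$ are nondecreasing is then used exactly once, and exactly at the point where the example above breaks your version: if a positive-length subtree $U$ moves to the position of a parentless subtree $V$, then $V$ lies to the left of $U$ because the parts are nondecreasing, and if $V$ has a parent it lies to the left because of the node labels; hence every positive-length subtree moved by the cycle moves to the left, the leftmost member of the cycle is a leaf, and $\E(Z)<X$ follows directly, with no sorting and no recursion. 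Some argument playing this role---controlling how the exchanged subtrees compare with the leaves they replace---is what your plan still has to supply.
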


\begin{proof}
Let $p.\lb{a_1}{b_1}\cdots\rb{a_l}{b_l}$
be the path $\p\left(\F\left(X\right)\right)$ in $Q$ with destination
$p=p_1p_2\cdots p_j$.
Then any $Z\in L^+$ such that
$Z\sim\F\left(X\right)$ can be assembled from the set
\begin{equation}\label{Steckdosen}
{\color{red}p_1},{\color{red}p_2},\ldots,{\color{red}p_j},
\vcenter{\begin{xy}<.3cm,0cm>:
(1,2)="1"*+!U_{_1};
"1";(0,1)*+!U{\color{red}a_1}**\dir{-};
"1";(2,1)*+!U{\color{red}b_1}**\dir{-};
\end{xy}},\ldots,
\vcenter{\begin{xy}<.3cm,0cm>:
(1,2)="1"*+!U_{_l};
"1";(0,1)*+!U{\color{red}a_l}**\dir{-};
"1";(2,1)*+!U{\color{red}b_l}**\dir{-};
\end{xy}}
\end{equation}
by identifying the tree
$\vcenter{\begin{xy}<.25cm,0cm>:
(1,2)="1"*+!U_{_i};
"1";(0,1)*+!U{\color{red}a_i}**\dir{-};
"1";(2,1)*+!U{\color{red}b_i}**\dir{-};
\end{xy}}$
in \autoref{Steckdosen} with
one of the leaves of value $a_i+b_i$ in \autoref{Steckdosen}
for all $1\le i\le l$.
This sequence of identifications can in turn be interpreted
as an injective function $\left\{1,2,\ldots,l\right\}\to
\left\{1,2,\ldots,j+2l\right\}$.
Viewing $\F\left(X\right)$ and $Z$ as injective functions,
the sequence of exchanges of subtrees of equal
squash transforming $\F\left(X\right)$ into $Z$ is equivalent to
a permutation of $\left\{1,2,\ldots,j+2l\right\}$.
We can express this permutation as a product
of disjoint cycles. In terms of forests, each of these cycles permutes
a set of subtrees of $\F\left(X\right)$ of equal squash.
Note that the set of trees permuted by such a cycle
contains at most one leaf, since we regard leaves 
of the same value as indistinguishable
when decomposing a permutation into disjoint cycles.

Since these cycles act on disjoint sets of subtrees,
we can assume that the sequence of subtree exchanges 
transforming $\F\left(X\right)$ into $Z$ is a single cycle permuting
subtrees of the same squash, at most one of which being a leaf.
Suppose that the cycle moves the subtree $U$ of positive length
to the position of the subtree $V$.
If $V$ has no parent, then it lies to the left of $U$ since the parts
of $X$ appear in nondecreasing order.
If $V$ has a parent, then again $V$ lies to the left
of $U$ since otherwise the parent of $V$
would have a larger node label than $U$.
We conclude that the leftmost subtree permuted by the cycle is
a leaf and that the subtrees of positive length all move to the left,
resulting in a forest which under $\E$ is
lexicographically smaller than $X$.
\end{proof}

Assembling the results above yields the following main results of this section.

\begin{proposition}\label{MainFactorization}
$\E\circ\iota:kQ\to k\mathcal{M}^+$ is surjective.
\end{proposition}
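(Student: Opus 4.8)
The plan is to show that every basis element $\left[X\right]$ of $k\mathcal{M}^+$ (that is, every P\'olya class of an aligned unlabeled forest of some value $n$) lies in the image of $\E\circ\iota$, and since these class sums span $k\mathcal{M}^+$ this suffices. Given such a class, I would pick its nondecreasing representative $X\in M^+$, form the aligned labeled forest $\F\left(X\right)\in L^+$ constructed in \autoref{FactorizationSection} (which satisfies $\E\left(\F\left(X\right)\right)=X$), and then consider the path $P=\p\left(\F\left(X\right)\right)$ in $Q$. The candidate preimage is $P$ itself: I claim $\E\left(\iota\left(P\right)\right)$ equals $\left[X\right]$ plus a $\mathbb{Z}$-linear combination of classes $\left[X'\right]$ with $X'$ strictly lexicographically smaller than $X$. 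Granting this, a downward induction on the lexicographic order on the (finite) set of aligned unlabeled forests of value $n$ finishes the argument: the strictly smaller classes are already in the image by induction, so $\left[X\right]$ is too.

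The heart of the proof is therefore the computation of $\E\left(\iota\left(P\right)\right)$. By \autoref{IotaWiggle}, $\iota\left(\p\left[\F(X)\right]\right)=\sum_{[U]\sim[\F(X)]}[U]$, a sum over the $\sim$-classes obtained from $\F\left(X\right)$ by the two moves (subtree exchange of equal squash under the node-label condition, and part exchange). Applying $\E$ and using \autoref{IndexStabilizer} to convert each labeled class sum $\left[U\right]$ into a multiple of the unlabeled class $\left[\E\left(U\right)\right]$, we get that $\E\left(\iota\left(P\right)\right)$ is a nonnegative integer combination of unlabeled classes $\left[\E\left(U\right)\right]$ with $U\sim\F\left(X\right)$. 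The term coming from $\F\left(X\right)$ itself contributes a positive multiple of $\left[X\right]$. Every other term has $\left[U\right]\ne\left[\F\left(X\right)\right]$, so \autoref{FMinimum} applies and gives $\E\left(U\right)<X$. Hence $\E\left(\iota\left(P\right)\right)=c\left[X\right]+\text{(combination of strictly smaller classes)}$ for some positive integer $c$, and after clearing the coefficient (we work over a field of characteristic zero) the induction goes through.

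I expect the main obstacle to be bookkeeping around \autoref{IndexStabilizer}, specifically verifying that the coefficient $c$ of $\left[X\right]$ is genuinely nonzero rather than getting accidentally cancelled. Two subtleties deserve care: first, the aligned-rendering machinery and the relations $\mathcal{N}+\mathcal{J}$ are not directly invoked here — the surjectivity statement is about the plain map $\E\circ\iota$, so I only need \autoref{IotaWiggle}, \autoref{IndexStabilizer}, and \autoref{FMinimum}, and I should resist the temptation to pass to quotients prematurely. Second, I must confirm that $\F\left(X\right)$ is itself \emph{aligned} whenever $X$ is (so that $\p$ and $\iota\circ\p$ apply to it) — this follows because $\E\left(\F\left(X\right)\right)=X\in M^+$ and alignment is a condition on squashes of children, which $\E$ preserves. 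With those points nailed down, the surjectivity is a clean induction, with \autoref{FMinimum} providing exactly the triangularity needed to peel off the top term.
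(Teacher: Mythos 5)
Your proposal is correct and follows essentially the same route as the paper: take the nondecreasing representative $X$, pass to $P=\p\left[\F\left(X\right)\right]$, use \autoref{IotaWiggle} and \autoref{FMinimum} to see that $\E\left(\iota\left(P\right)\right)$ is a positive multiple of $\left[X\right]$ plus lexicographically smaller classes, and induct. Your extra care with the coefficient via \autoref{IndexStabilizer} is a point the paper's proof leaves implicit, but it does not change the argument.
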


\begin{proof}
Let $X$ be a nondecreasing element of $M^+$
and put $P=\p\left[\F\left(X\right)\right]$.
Then $\iota\left(P\right)
=\sum_{\left[U\right]\sim\left[\F\left(X\right)\right]}\left[U\right]$
by \autoref{IotaWiggle} 
so that taking $\mathcal{Y}
=\E\left(\iota\left(P\right)-\left[\F\left(X\right)\right]\right)$
we have $\left[Y\right]<\left[X\right]$
for each term $\left[Y\right]$ of $\mathcal{Y}$
by \autoref{FMinimum}.
Repeating the argument for all the terms of $\mathcal{Y}$
and subtracting the result from $P$ results in an element of $kQ$
mapping to $\left[X\right]$ under $\E\circ\iota$ by induction.
\end{proof}

For example,
applying \autoref{MainFactorization} to
$\left[\vcenter{\begin{xy}<.2cm,0cm>:
(2,3)="1"*+!U{_1};
"1";(1,2)*+!U{\color{red}1}**\dir{-};
"1";(4,2)="2"*+!U{_2}**\dir{-};
"2";(3,1)*+!U{\color{red}1}**\dir{-};
"2";(5,1)*+!U{\color{red}2}**\dir{-};
(6,3)*+!U{\color{red}3};
\end{xy}}\right]$
produces the path $p.\lb{1}{3}\rb{1}{2}-p.\lb{1}{2}\rb{1}{3}$
where $p$ is the partition $34$.

\begin{corollary}\label{IotaSurjectiveModKernel}
$\iota$ is surjective modulo $\ker\Delta$.
\end{corollary}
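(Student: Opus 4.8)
The plan is to deduce the corollary from the surjectivity in \autoref{MainFactorization} together with the factorization $\Delta=\pi\circ\E$ of \autoref{DeltaPiE}. Since $\ker\Delta$ is a subspace of $k\mathcal{L}$ and $\iota\left(kQ\right)\subseteq k\mathcal{L}$, it suffices to show $\Delta\left(\iota\left(kQ\right)\right)=\Delta\left(k\mathcal{L}\right)$: for $z\in k\mathcal{L}$ one then chooses $y\in kQ$ with $\Delta\left(\iota\left(y\right)\right)=\Delta\left(z\right)$, so that $z-\iota\left(y\right)\in\ker\Delta$ and $z$ is congruent to $\iota\left(y\right)$ modulo $\ker\Delta$, which is exactly the assertion that $\iota$ is surjective modulo $\ker\Delta$.

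To compute both sides, recall that $\Delta=\pi\circ\E$ with $\ker\pi=\mathcal{N}+\mathcal{J}$. The map $\E$ carries $k\mathcal{L}$ onto $k\mathcal{M}$: its image lies in $k\mathcal{M}$ by \autoref{IndexStabilizer}, and it is onto because $\E\left[X\right]=\alpha_X\left[\E\left(X\right)\right]$ with $\alpha_X$ a positive integer, while every unlabeled forest is $\E\left(X\right)$ for some labeled $X$. On the other side, $\E\circ\iota$ has image exactly $k\mathcal{M}^+$ by \autoref{ImageIotaAligned} and \autoref{MainFactorization}. Hence $\Delta\left(k\mathcal{L}\right)=\pi\left(k\mathcal{M}\right)$ and $\Delta\left(\iota\left(kQ\right)\right)=\pi\left(k\mathcal{M}^+\right)$, so the corollary reduces to the single inclusion $k\mathcal{M}\subseteq k\mathcal{M}^++\left(\mathcal{N}+\mathcal{J}\right)$. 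Granting this, write $\E\left(z\right)=m+w$ with $m\in k\mathcal{M}^+$ and $w\in\mathcal{N}+\mathcal{J}=\ker\pi$, pick $y\in kQ$ with $\E\left(\iota\left(y\right)\right)=m$, and observe $\Delta\left(\iota\left(y\right)\right)=\pi\left(m\right)=\pi\left(\E\left(z\right)\right)=\Delta\left(z\right)$.

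The inclusion $k\mathcal{M}\subseteq k\mathcal{M}^++\left(\mathcal{N}+\mathcal{J}\right)$ I would obtain in two stages. First, $kM\subseteq kM^++\left(\mathcal{N}+\mathcal{J}\right)$: if an unlabeled forest has no node whose two children are equal natural numbers, this is exactly \autoref{AlignedRendering}; if it does have such a node, then the tree consisting of that node and its two equal leaves lies in $N$ (since $2$ times it is one of the generators of $N$ and $k$ has characteristic zero), hence so does the whole part containing it since $N$ is an ideal of $k\mathbb{M}$, hence the forest lies in the concatenation ideal $\mathcal{N}$. Second, apply the idempotent on $kM$ which, on forests with $j$ parts, averages over the P\'olya action of $\sym{j}$. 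This idempotent projects $kM$ onto its fixed space $k\mathcal{M}$ and $kM^+$ onto $k\mathcal{M}^+$, because a P\'olya class is either entirely aligned or entirely unaligned; and it maps $\mathcal{N}+\mathcal{J}$ into itself, because permuting the parts of a relator $X\cdot r\cdot Y$ with $r\in N$ or $r\in J$ again produces a relator of the same kind. Applying this idempotent to $kM\subseteq kM^++\left(\mathcal{N}+\mathcal{J}\right)$ yields the required inclusion.

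The main obstacle is this last step. \autoref{AlignedRendering} aligns one forest at a time by local moves, whereas we need the output to be a combination of aligned \emph{class sums}, i.e.\ to live in $k\mathcal{M}^+$ and not merely in $kM^+$. The averaging idempotent resolves this, but only after the (routine but genuinely necessary) verification that $\mathcal{N}$ and $\mathcal{J}$ are invariant under rearranging the parts of a forest, which requires unwinding the definition of these concatenation ideals. Everything else is bookkeeping with the maps $\iota$, $\E$, $\pi$, $\Delta$ from the diagram in \autoref{IntroductionSection}.
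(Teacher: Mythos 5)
Your argument is correct, and it runs on the same three ingredients as the paper's proof --- \autoref{DeltaPiE}, \autoref{AlignedRendering}, and \autoref{MainFactorization} --- but it packages them differently. The paper argues element-wise: it disposes of a class $\left[X\right]$ containing a node with two equal leaves by noting $\left[X\right]\in\ker\Delta$, then takes an aligned rendering $A$ of $\E\left[X\right]$, lifts it with $\F$, and uses $\ker\E\subseteq\ker\Delta$ to compare $\iota\left(P\right)$ with $\F\left(A\right)$. You instead compute images, reducing everything to the single inclusion $k\mathcal{M}\subseteq k\mathcal{M}^{+}+\left(\mathcal{N}+\mathcal{J}\right)$ and comparing $\pi\circ\E$ values directly, which lets you avoid $\F$ altogether; you also absorb the equal-leaves case into $\mathcal{N}$ (using characteristic zero), which is an acceptable substitute for the paper's shortcut. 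The genuinely valuable difference is your averaging step: \autoref{AlignedRendering} applied term by term to $\E\left[X\right]$ only produces an element of $kM^{+}$, whereas \autoref{MainFactorization} is stated for $k\mathcal{M}^{+}$, so some symmetrization is needed before it can be invoked; the paper passes over this silently (one can also fix it by transporting a single rendering along the P\'olya orbit, since the replacement moves act within a part and hence commute with permuting parts). Your verification that $\mathcal{N}+\mathcal{J}$ is stable under the P\'olya action is correct --- a generator $U\,r\,V$ with $r$ a one-part relator is carried by a part permutation to another element of the same form --- and together with the grading by the number of parts this makes the projection onto invariants work as you claim. In short: the paper's proof is shorter and stays at the level of individual classes; yours is slightly longer but states the key inclusion cleanly and supplies a detail the paper leaves implicit.
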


\begin{proof}
Let $X\in L$. We will show that some element of $kQ$
maps under $\iota$ to an element of $k\mathcal{L}^+$
congruent to $\left[X\right]$ modulo $\ker\Delta$.
If $X$ has a node
$\vcenter{\begin{xy}<.25cm,0cm>:
(1,2)="1"*+!U_{_i};
"1";(0,1)*+!U{Z_1}**\dir{-};
"1";(2,1)*+!U{Z_2}**\dir{-};
\end{xy}}$ for which
$Z_1=Z_2\in\mathbb{N}$ then $\left[X\right]\in\ker\Delta$
and we can take $P=0$.
Otherwise by \autoref{AlignedRendering} applied to all the terms
of $\E\left[X\right]$ there exist
$A\in kM^+$ and $\mathcal{Y}\in\mathcal{N}+\mathcal{J}$ such that
$\E\left[X\right]=A+\mathcal{Y}$. 
Applying $\F$ we have
$\left[X\right]\equiv\F\left(A\right)\pmod{\ker\Delta}$.
By \autoref{MainFactorization}
we have $P\in kQ$ such that $\E\left(\iota\left(P\right)\right)=A$
so that $\iota\left(P\right)-\F\left(A\right)
\in\ker\E\subseteq\ker\Delta$.
It follows that $\iota\left(P\right)\equiv
\F\left(A\right)\equiv\left[X\right]\pmod{\ker\Delta}$.
\end{proof}

\begin{proposition}\label{ExtQuiver}
$Q_n$ is the ordinary quiver of $\Sigma\left(\sym{n}\right)$.
\end{proposition}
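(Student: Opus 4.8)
The plan is to read the result off from the anti-homomorphism $\iota$ together with the combinatorics of $\Delta=\pi\circ\E$, so that \autoref{NodesInImage} (or rather its natural strengthening) supplies the only point that is not formal. First I would assemble the surjection: composing the anti-homomorphism $\iota\colon kQ_n\to k\mathcal{L}_n$ of \autoref{IotaHomomorphism} with the quotient map onto $k\mathcal{L}_n/\ker\Delta\cong\Sigma(\sym{n})^{\mathsf{op}}$ of \autoref{LIsomorphism} gives an algebra homomorphism $\bar\iota\colon kQ_n\to\Sigma(\sym{n})$ with $\ker\bar\iota=\iota^{-1}(\ker\Delta)$, and $\bar\iota$ is surjective by \autoref{IotaSurjectiveModKernel}. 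Since $Q_n$ is a finite quiver with no oriented cycles, its arrow ideal $\rad(kQ_n)$ is nilpotent, so $\rad^N(kQ_n)=0\subseteq\ker\bar\iota$ automatically; by the standard theory of path algebras of basic algebras \cite{blue} it therefore suffices to prove the single inclusion $\ker\bar\iota\subseteq\rad^2(kQ_n)$. (The vertex idempotents then stay primitive in the quotient because $e_p\notin\ker\bar\iota$ and $e_p(kQ_n)e_p$ is local, so $Q_n$ is genuinely the ordinary quiver of $\Sigma(\sym{n})$.)

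Next I would exploit two gradings. Grade $kQ_n$ by path length and write $u=u_0+u_1+u_{\ge 2}$. Because $\bullet$ adds lengths (and by the double-coset description of the product used for \autoref{LSubalgebra}), $\iota$ sends the length-$\ell$ part of $kQ_n$ into the span of $\mathcal{L}_n$-classes of forests of length $\ell$; and $\E$ preserves the number of nodes. Hence $\E(\iota(u_\ell))$ is exactly the degree-$\ell$ component of $\E(\iota(u))$ in the node-number grading of $kM$. The key observation is that the ideal $\mathcal{N}+\mathcal{J}=\ker\pi$ recalled in \autoref{AlignmentSection} is homogeneous for this grading: the generators of $N$ and of $J$ are sums of trees having the same number of nodes, and concatenation adds node numbers. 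Since $\Delta=\pi\circ\E$ by \autoref{DeltaPiE}, the hypothesis $\iota(u)\in\ker\Delta$ means $\E(\iota(u))\in\mathcal{N}+\mathcal{J}$, and so $\E(\iota(u_\ell))\in(\mathcal{N}+\mathcal{J})_\ell$ for every $\ell$; it remains to force $u_0=u_1=0$.

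In degree $0$ one has $(\mathcal{N}+\mathcal{J})_0=0$ because every generator has at least one node, while $\E$ and $\iota$ act as the identity on node-free forests, so $\E(\iota(u_0))=0$ gives $u_0=0$. In degree $1$ only $\mathcal{N}$ contributes, and $(\mathcal{N}+\mathcal{J})_1$ is spanned by the elements obtained by inserting arbitrary leaves to the left and right of $\,[\text{one-node tree with leaves }c,d]+[\text{one-node tree with leaves }d,c]\,$; in each such generator at most one of the two one-node forests is aligned (and none is when $c=d$), and distinct generators have distinct non-aligned parts, so the only aligned element of $(\mathcal{N}+\mathcal{J})_1$ is $0$. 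Since $\iota(u_1)$ has aligned image by \autoref{ImageIotaAligned}, this yields $\E(\iota(u_1))=0$. Finally, $\E\circ\iota$ is injective on the span of the edges: distinct edges of $Q_n$ — which has at most one edge between any ordered pair of vertices, since the merged pair producing a given coarser partition is determined (the new part exceeds each summand) — map under $\iota$ to one-node forest classes with distinct squash and distinct bracketed parts, hence under $\E$ to multiples of distinct $\mathcal{M}_n$-classes by \autoref{IndexStabilizer}. Thus $u_1=0$, so $u\in\rad^2(kQ_n)$, proving $\ker\bar\iota\subseteq\rad^2(kQ_n)$ and hence the theorem.

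The main obstacle is not in this bookkeeping but in the facts it rests on, namely that $\iota$ is injective, surjective modulo $\ker\Delta$, and has aligned image (\autoref{IotaInjective}, \autoref{IotaSurjectiveModKernel}, \autoref{ImageIotaAligned}), which is the real work of the preceding sections; granting those, the one conceptual step is the node-grading of $\ker\pi=\mathcal{N}+\mathcal{J}$, which decouples the degrees and so prevents the length-$1$ part of $\iota(u)$ from being cancelled by the images of longer paths. A more hands-on route to the degree-$1$ case for a single edge is available through \autoref{NodesInImage}; the verifications that $\iota$ respects the length grading, that $(\mathcal{N}+\mathcal{J})_{\le 1}$ has the shape described, and that $\E\circ\iota$ is faithful on vertices and edges are routine.
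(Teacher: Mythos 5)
Your proposal is correct, and its outer frame is the same as the paper's: both reduce the claim, via \autoref{IotaInjective}, \autoref{IotaSurjectiveModKernel} and \autoref{LIsomorphism} together with \cite[Lemma~3.6]{blue} and the fact that the arrow ideal of $kQ_n$ is nilpotent, to the single inclusion $\iota^{-1}\left(\ker\Delta\right)\subseteq R^2$, where $R$ is the arrow ideal. Where you genuinely diverge is in how that inclusion is proved. The paper multiplies an element of the kernel by vertex idempotents on both sides, uses the fact that all paths with a fixed source and destination in $Q_n$ have the same length to reduce to a scalar multiple of a single vertex or a single edge, and then kills the edge case by the explicit descent-pattern computation of \autoref{NodesInImage}. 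You instead exploit two compatible gradings: path length on $kQ_n$ and node number on $kM$, observing that $\ker\pi=\mathcal{N}+\mathcal{J}$ is homogeneous for the latter since its generators are sums of trees with equal node counts, so that $\Delta=\pi\circ\E$ (\autoref{DeltaPiE}) forces each homogeneous component of $\E\left(\iota\left(u\right)\right)$ separately into $\ker\pi$; the degree-zero part of $\ker\pi$ vanishes, and in degree one you use \autoref{ImageIotaAligned} plus the observation that the only aligned element of $\left(\mathcal{N}+\mathcal{J}\right)_1$ is zero, together with the injectivity of $\E\circ\iota$ on the span of vertices and edges (via \autoref{IotaEquivariant} and \autoref{IndexStabilizer}), to conclude $u_0=u_1=0$. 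Your route buys a slightly stronger statement for free (no nonzero linear combination of edges, not just no single edge, lands in $\ker\Delta$) and makes the role of the Lie-theoretic kernel transparent, at the cost of verifying the homogeneity of $\mathcal{N}+\mathcal{J}$ and the degree-one alignment claim; the paper's route is shorter because the idempotent splitting plus the gradedness of $Q_n$ by number of parts reduces everything to the one concrete lemma \autoref{NodesInImage}. Both arguments are sound.
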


\begin{proof} Let $I=\iota^{-1}\left(\ker\Delta\right)$
so that $kQ_n/I\cong\iota\left(kQ_n\right)/\ker\Delta$ since $\iota$
is injective by \autoref{IotaInjective}. But
$\iota\left(kQ_n\right)/\ker\Delta
\cong k\mathcal{L}_n/\ker\Delta$
by \autoref{IotaSurjectiveModKernel} and
$k\mathcal{L}_n/\ker\Delta\cong
\Sigma\left(\sym{n}\right)^\mathsf{op}$ by \autoref{LIsomorphism}.
Let $R$ be the Jacobson radical of $kQ_n$. Then $R$ is generated
by all paths of $Q_n$ of positive length.
Since $Q_n$ is the ordinary quiver of any quotient of $kQ_n$ by
an ideal contained in $R^2$ by \cite[Lemma~3.6]{blue}
it suffices to show that $I\subseteq R^2$.

Let $P$ be any element of $I$.
By multiplying $P$ on the left and on the right
by various vertices of $Q_n$ we can split $P$ into a sum
of elements of $I$ all of whose terms have the same source
and destination. We therefore assume that
all the terms of $P$ have the same source and destination and hence the
same length. If this length were zero or one, then 
$P$ would be a multiple of a vertex or an edge.
But $\Delta\left(\iota\left(p\right)\right)=p\ne 0$
for all vertices $p$, while
$\Delta\left(\iota\left(e\right)\right)\ne 0$
for all edges $e$ by \autoref{NodesInImage}. Therefore
$P\in R^2$.
\end{proof}

\section{The relations}\label{RelationsSection}
In this section we state our conjecture
on the relations for the quiver presentation of
$\Sigma\left(\sym{n}\right)$.
Let $\mathcal{R}\subseteq k\mathcal{B}^\ast$ be the set of elements
\begin{equation}\label{2WayNotPrec}
\lb{a}{b}\rb{c}{d}-\lb{c}{d}\rb{a}{b}\qquad\text{where}\qquad
a+b\not\in\left\{c,d\right\}
\qquad\text{and}\qquad
c+d\not\in\left\{a,b\right\}
\end{equation}
and the elements
\begin{equation}
\label{BranchRelation}
\lb{a}{b}\mb{c}{d}\rb{x}{y}
+\lb{x}{y}\mb{a}{b}\rb{c}{d}
-\lb{a}{b}\mb{x}{y}\rb{c}{d}
-\lb{c}{d}\mb{x}{y}\rb{a}{b}
\end{equation}
where $a,b,c,d$ satisfy the condition in \autoref{2WayNotPrec} and either
\begin{enumerate}
\item\label{RightSpaceship} $a+b=c+d\in\left\{x,y\right\}$ or
\item\label{LeftSpaceship} $x+y\in\left\{a,b\right\}\cap\left\{c,d\right\}$.
\end{enumerate}
The elements of $\mathcal{R}$ are called {\em branch relations}.
The following proposition shows that the branch relations
produce relations when applied to vertices of $Q$.

\begin{proposition}\label{BranchProposition}
If $R\in\mathcal{R}$ then $p.R\in\ker\left(\E\circ\iota\right)$
for all vertices $p$ of $Q$.
\end{proposition}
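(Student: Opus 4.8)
The plan is to transport the claim across the two $k\mathcal{B}^\ast$-module structures already in place and then reduce to a purely combinatorial identity about how branch operations act on forests. First I would observe that $\E\colon kL\to kM$ is a homomorphism of $k\mathcal{B}^\ast$-modules — immediate from the definitions, since the action on $M$ is the action on $L$ with node labels forgotten. Together with \autoref{IotaDHomomorphism} this gives $\E\bigl(\iota(p.R)\bigr)=\E\bigl(\iota(p)\bigr).R$. Since $p$ is a partition, $\iota(p)$ is the class sum of $p$ regarded as a forest of length zero, on which $\E$ acts as the identity (a length-zero forest carries no node labels); hence $\E\bigl(\iota(p)\bigr)$ is the sum $\sum_q q$ over the distinct compositions $q$ representing $p$, and $\E\bigl(\iota(p)\bigr).R=\sum_q(q.R)$. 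It therefore suffices to show that $X.R=0$ in $kM$ for every unlabeled forest $X$; the argument I have in mind never refers to the part of $X$ lying above its leaves, so I would prove this for all $X\in M$ at once.

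For a first-kind relation $R=\lb{a}{b}\rb{c}{d}-\lb{c}{d}\rb{a}{b}$ from \autoref{2WayNotPrec} the check is short. The product $X.\lb{a}{b}\rb{c}{d}$ is the sum, over all ordered pairs $(\ell,\ell')$ of distinct leaves of $X$ of values $a+b$ and $c+d$, of the forest obtained from $X$ by replacing $\ell$ with a node having leaf children $a,b$ and $\ell'$ with a node having leaf children $c,d$. The conditions $a+b\notin\{c,d\}$ and $c+d\notin\{a,b\}$ are exactly what ensures the second operation never acts on a leaf produced by the first, so this description is correct; it is symmetric under interchanging $\bb{a}{b}$ and $\bb{c}{d}$, whence $X.R=0$.

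The substance lies in the second family \autoref{BranchRelation}. The key observation — forced by the value constraints on $a,b,c,d,x,y$ in each of the alternatives \autoref{RightSpaceship} and \autoref{LeftSpaceship} — is that the three nodes introduced on $X$ by $\bb{a}{b}$, $\bb{c}{d}$, $\bb{x}{y}$ can occupy only three relative configurations: all three pairwise non-nested; the $\bb{x}{y}$-node nested with the $\bb{a}{b}$-node; or the $\bb{x}{y}$-node nested with the $\bb{c}{d}$-node (the direction of nesting being dictated by the case); no other nesting can occur. Writing $\Sigma_1,\Sigma_2,\Sigma_3$ for the corresponding sums over $X$, I would evaluate each of the four summands of \autoref{BranchRelation} applied to $X$ by performing the operations in order and, at each step, recording which leaves of the current forest carry the required value, carefully distinguishing leaves already present in $X$ from leaves just created by an earlier operation. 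This exhibits the four summands, taken in the order in which they occur in \autoref{BranchRelation}, as $\Sigma_1$, $\Sigma_1+\Sigma_2+\Sigma_3$, $\Sigma_1+\Sigma_3$, $\Sigma_1+\Sigma_2$ under \autoref{RightSpaceship}, and as $\Sigma_1+\Sigma_2+\Sigma_3$, $\Sigma_1$, $\Sigma_1+\Sigma_2$, $\Sigma_1+\Sigma_3$ under \autoref{LeftSpaceship}; in either case the alternating sum prescribed by \autoref{BranchRelation} vanishes. The identity of the previous paragraph is used freely to reorder operations, and in particular supplies the $\bb{a}{b}\leftrightarrow\bb{c}{d}$ symmetry that identifies the last two summands of \autoref{BranchRelation} with one another.

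I expect the bookkeeping in this last step to be the main obstacle. One must split into sub-cases according to which of $x,y$ equals the common value $a+b=c+d$ in \autoref{RightSpaceship}, and which of $a,b$ and of $c,d$ equals $x+y$ in \autoref{LeftSpaceship}; and, more delicately, one must keep multiplicities correct when some of the parts coincide — for instance when $\bb{a}{b}=\bb{c}{d}$, where $\Sigma_2$ and $\Sigma_3$ merge and several of the sums acquire a factor of two — so that the cancellation really is term by term at the level of basis forests. None of this is deep, but it is where the care is needed.
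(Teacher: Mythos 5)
Your argument is correct, but it is organized differently from the paper's. The paper proves the proposition by evaluating $\E\circ\iota$ on each summand of $p.R$ directly: for the relations \autoref{2WayNotPrec} it writes down the two image classes and observes that they differ only in node labels, and for \autoref{BranchRelation} it fixes the sub-case $a+b=c+d=x$ and splits into three cases according to whether $p$ has no part $x$ or $x+y$, exactly one part $x$, or several, displaying the four images explicitly; the three kinds of terms occurring there (all new nodes separate, the $(a,b)$-node nested under the $(x,y)$-node, the $(c,d)$-node nested under it) are precisely your $\Sigma_1,\Sigma_2,\Sigma_3$ specialized to a particular $p$. You instead combine \autoref{IotaDHomomorphism} with the easy (but not stated in the paper) observation that $\E$ intertwines the $k\mathcal{B}^\ast$-actions on $kL$ and $kM$, thereby reducing the proposition to the uniform identity $X.R=0$ in $kM$ for an arbitrary unlabeled forest $X$. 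This buys something real: summing over leaves makes the case analysis on part multiplicities disappear (the degenerate cases become empty sums), the family \autoref{2WayNotPrec} and both alternatives \autoref{RightSpaceship} and \autoref{LeftSpaceship} are handled by one mechanism, and the statement you prove is stronger because it is independent of $p$. The cost is the extra equivariance check and the bookkeeping you flag at the end; your claimed evaluations of the four summands as $\Sigma_1$, $\Sigma_1+\Sigma_2+\Sigma_3$, $\Sigma_1+\Sigma_3$, $\Sigma_1+\Sigma_2$ in case \autoref{RightSpaceship}, with the permuted pattern in case \autoref{LeftSpaceship}, are the correct ones, the stated value constraints do exclude every other nesting configuration, and the coincidence $\bb{a}{b}=\bb{c}{d}$ merely doubles matching terms, so the cancellation survives it.
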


\begin{proof}
Suppose $R=\lb{a}{b}\rb{c}{d}-\lb{c}{d}\rb{a}{b}$
where $a,b,c,d\in\mathbb{N}$ satisfy the condition in
\autoref{2WayNotPrec}. Then
for any partition $p$ we have
$\iota\left(p.\lb{a}{b}\rb{c}{d}\right)
=\left[\vcenter{\begin{xy}<.2cm,0cm>:
(2,2)="1"*+!U{_1};
"1";(1,1)*+!U{\color{red}a}**\dir{-};
"1";(3,1)*+!U{\color{red}b}**\dir{-};
(5,2)="1"*+!U{_2};
"1";(4,1)*+!U{\color{red}c}**\dir{-};
"1";(6,1)*+!U{\color{red}d}**\dir{-};
(7,2)*+!UL{{\color{red}q_1}\cdots{\color{red}q_j}};
\end{xy}}\right]$ and
$\iota\left(p.\lb{c}{d}\rb{a}{b}\right)
=\left[\vcenter{\begin{xy}<.2cm,0cm>:
(2,2)="1"*+!U{_2};
"1";(1,1)*+!U{\color{red}a}**\dir{-};
"1";(3,1)*+!U{\color{red}b}**\dir{-};
(5,2)="1"*+!U{_1};
"1";(4,1)*+!U{\color{red}c}**\dir{-};
"1";(6,1)*+!U{\color{red}d}**\dir{-};
(7,2)*+!UL{{\color{red}q_1}\cdots{\color{red}q_j}};
\end{xy}}\right]$
if $p$ has parts $a+b$ and $c+d$ where $q_1,\ldots,q_j$
are the remaining parts of $p$,
while both expressions are zero if $p$ has no
part $a+b$ or no part $c+d$.
This shows that $\E\left(\iota\left(p.R\right)\right)=0$
for all vertices $p$.

Now let $R$ be the element in \autoref{BranchRelation}
and suppose $a,b,c,d,x,y\in\mathbb{N}$ satisfy
condition~\autoref{RightSpaceship} of
the definition of $\mathcal{R}$.
Specifically, we assume that $a+b=c+d=x$ although
the argument can be easily modified if $a+b=c+d=y$.
In each of the cases that
\begin{enumerate}
\item $p$ has at least one part $x+y$ and exactly one part $x$
\item $p$ has at least one part $x+y$ and two or more parts $x$
\item $p$ has no part $x+y$ or no part $x$
\end{enumerate}
the image of $p.R$ can be calculated explicitly.
In the third case all four terms
of $p.R$ are zero.
In the second case we take $p$ to be the
partition with parts
$x+y,x,x,q_1,\ldots,q_j$ for any $q_1,\ldots,q_j\in\mathbb{N}$.
We calculate
\begin{align*}\textstyle
p.\lb{x}{y}\mb{a}{b}\rb{c}{d}
&\stackrel{\E\circ\iota}{\longrightarrow}
\left[\vcenter{\begin{xy}<.2cm,0cm>:
(4,3)="1";
"1";(2,2)="2"**\dir{-};
"2";(1,1)*+!U{\color{red}a}**\dir{-};
"2";(3,1)*+!U{\color{red}b}**\dir{-};
"1";(5,2)*+!U{\color{red}y}**\dir{-};
(7,3)="1";
"1";(6,2)*+!U{\color{red}c}**\dir{-};
"1";(8,2)*+!U{\color{red}d}**\dir{-};
(9,3)*+!UL{{\color{red}xq_1}\cdots{\color{red}q_j}};
\end{xy}}\right]
+\left[\vcenter{\begin{xy}<.2cm,0cm>:
(4,3)="1";
"1";(2,2)="2"**\dir{-};
"2";(1,1)*+!U{\color{red}c}**\dir{-};
"2";(3,1)*+!U{\color{red}d}**\dir{-};
"1";(5,2)*+!U{\color{red}y}**\dir{-};
(7,3)="1";
"1";(6,2)*+!U{\color{red}a}**\dir{-};
"1";(8,2)*+!U{\color{red}b}**\dir{-};
(9,3)*+!UL{{\color{red}xq_1}\cdots{\color{red}q_j}};
\end{xy}}\right]
+\left[\vcenter{\begin{xy}<.2cm,0cm>:
(2,2)="1";
"1";(1,1)*+!U{\color{red}x}**\dir{-};
"1";(3,1)*+!U{\color{red}y}**\dir{-};
(5,2)="1";
"1";(4,1)*+!U{\color{red}a}**\dir{-};
"1";(6,1)*+!U{\color{red}b}**\dir{-};
(8,2)="1";
"1";(7,1)*+!U{\color{red}c}**\dir{-};
"1";(9,1)*+!U{\color{red}d}**\dir{-};
(9,2)*+!UL{{\color{red}q_1}\cdots{\color{red}q_j}};
\end{xy}}\right]\\
\textstyle p.\lb{a}{b}\mb{x}{y}\rb{c}{d}
&\stackrel{\E\circ\iota}{\longrightarrow}
\left[\vcenter{\begin{xy}<.2cm,0cm>:
(4,3)="1";
"1";(2,2)="2"**\dir{-};
"2";(1,1)*+!U{\color{red}c}**\dir{-};
"2";(3,1)*+!U{\color{red}d}**\dir{-};
"1";(5,2)*+!U{\color{red}y}**\dir{-};
(7,3)="1";
"1";(6,2)*+!U{\color{red}a}**\dir{-};
"1";(8,2)*+!U{\color{red}b}**\dir{-};
(9,3)*+!UL{{\color{red}xq_1}\cdots{\color{red}q_j}};
\end{xy}}\right]
+\left[\vcenter{\begin{xy}<.2cm,0cm>:
(2,2)="1";
"1";(1,1)*+!U{\color{red}x}**\dir{-};
"1";(3,1)*+!U{\color{red}y}**\dir{-};
(5,2)="1";
"1";(4,1)*+!U{\color{red}a}**\dir{-};
"1";(6,1)*+!U{\color{red}b}**\dir{-};
(8,2)="1";
"1";(7,1)*+!U{\color{red}c}**\dir{-};
"1";(9,1)*+!U{\color{red}d}**\dir{-};
(9,2)*+!UL{{\color{red}q_1}\cdots{\color{red}q_j}};
\end{xy}}\right]\\
\textstyle p.\lb{c}{d}\mb{x}{y}\rb{a}{b}
&\stackrel{\E\circ\iota}{\longrightarrow}
\left[\vcenter{\begin{xy}<.2cm,0cm>:
(4,3)="1";
"1";(2,2)="2"**\dir{-};
"2";(1,1)*+!U{\color{red}a}**\dir{-};
"2";(3,1)*+!U{\color{red}b}**\dir{-};
"1";(5,2)*+!U{\color{red}y}**\dir{-};
(7,3)="1";
"1";(6,2)*+!U{\color{red}c}**\dir{-};
"1";(8,2)*+!U{\color{red}d}**\dir{-};
(9,3)*+!UL{{\color{red}xq_1}\cdots{\color{red}q_j}};
\end{xy}}\right]
+\left[\vcenter{\begin{xy}<.2cm,0cm>:
(2,2)="1";
"1";(1,1)*+!U{\color{red}x}**\dir{-};
"1";(3,1)*+!U{\color{red}y}**\dir{-};
(5,2)="1";
"1";(4,1)*+!U{\color{red}a}**\dir{-};
"1";(6,1)*+!U{\color{red}b}**\dir{-};
(8,2)="1";
"1";(7,1)*+!U{\color{red}c}**\dir{-};
"1";(9,1)*+!U{\color{red}d}**\dir{-};
(9,2)*+!UL{{\color{red}q_1}\cdots{\color{red}q_j}};
\end{xy}}\right]\\
\textstyle p.\lb{a}{b}\mb{c}{d}\rb{x}{y}
&\stackrel{\E\circ\iota}{\longrightarrow}
\left[\vcenter{\begin{xy}<.2cm,0cm>:
(2,2)="1";
"1";(1,1)*+!U{\color{red}x}**\dir{-};
"1";(3,1)*+!U{\color{red}y}**\dir{-};
(5,2)="1";
"1";(4,1)*+!U{\color{red}a}**\dir{-};
"1";(6,1)*+!U{\color{red}b}**\dir{-};
(8,2)="1";
"1";(7,1)*+!U{\color{red}c}**\dir{-};
"1";(9,1)*+!U{\color{red}d}**\dir{-};
(9,2)*+!UL{{\color{red}q_1}\cdots{\color{red}q_j}};
\end{xy}}\right]
\end{align*}
 
so that $\E\left(\iota\left(p.R\right)\right)=0$.
The first case is similar to the second and the calculation in
the case that $a,b,c,d,x,y$ satisfy
condition~\autoref{LeftSpaceship} of the definition of $\mathcal{R}$
is similar to the calculation above.
\end{proof}

For unlabeled trees $X,Y,Z$ we denote
$\vcenter{\begin{xy}<.2cm,0cm>:
(2,3)="1";
"1";(1,2)*+!U{X}**\dir{-};
"1";(4,2)="2"**\dir{-};
"2";(3,1)*+!U{Y}**\dir{-};
"2";(5,1)*+!U{Z}**\dir{-};
\end{xy}}
+\vcenter{\begin{xy}<.2cm,0cm>:
(2,3)="1";
"1";(1,2)*+!U{Z}**\dir{-};
"1";(4,2)="2"**\dir{-};
"2";(3,1)*+!U{X}**\dir{-};
"2";(5,1)*+!U{Y}**\dir{-};
\end{xy}}
+\vcenter{\begin{xy}<.2cm,0cm>:
(2,3)="1";
"1";(1,2)*+!U{Y}**\dir{-};
"1";(4,2)="2"**\dir{-};
"2";(3,1)*+!U{Z}**\dir{-};
"2";(5,1)*+!U{X}**\dir{-};
\end{xy}}$
by $\J\left(X,Y,Z\right)$.
Suppose that $A=\sum_{i=1}^mA_i$ is an aligned
rendering of $\J\left(X,Y,Z\right)$ where
$A_1,A_2,\ldots,A_m$ are aligned unlabeled trees.
If it exists, $A$ may have
more or fewer than three terms and satisfies
$A-\J\left(X,Y,Z\right)\in\ker\pi$
by \autoref{AlignedRendering}.
But since $\J\left(X,Y,Z\right)\in\ker\pi$ we have $A\in\ker\pi$.
For any unlabeled forests $U$ and $V$
we denote by $UV$ the forest whose parts are the parts of $U$
followed by the parts of $V$.
This applies in particular when $V$ is a composition,
which we regard as a forest of length zero.
For any composition $q$ we
apply \autoref{MainFactorization}
to each class $\left[A_iq\right]$.
This produces an element $P\in kQ$ such that
$\E\left(\iota\left(P\right)\right)=\sum_{i=1}^m\left[A_iq\right]$.
Then $P\in\ker\left(\Delta\circ\iota\right)$.

Let $\mathcal{S}$ be the set of elements $P\in kQ$
for which $\E\left(\iota\left(P\right)\right)
=\sum_{i=1}^m\left[A_iq\right]$ for some
composition $q$ and some aligned rendering
$\sum_{i=1}^mA_i$ of an element of the form
\begin{enumerate}
\item\label{SmallJacobi}
$\J\left({\color{red}x},{\color{red}y},{\color{red}z}\right)$
where $x<y<z$ are natural numbers such that $x+y\ne z$, or
\item\label{BigJacobi}
$\J\left(\vcenter{\begin{xy}<.2cm,0cm>:
(2,2)="1";
"1";(1,1)*+!U{\color{red}x_1}**\dir{-};
"1";(3,1)*+!U{\color{red}x_2}**\dir{-};
\end{xy}},
{\color{red}y},{\color{red}z}\right)$
where $x_1<x_2$ and $y<z$ are natural numbers such that
$x_1+x_2\in\left\{y,z,y+z\right\}$.
\end{enumerate}
Then the elements of $\mathcal{S}$ are relations for the
quiver presentation of $\Sigma\left(\sym{n}\right)$.
Observe that elements of the form \autoref{SmallJacobi}
have only one possible aligned
rendering, while elements of the form \autoref{BigJacobi}
have only one ``useful'' aligned rendering.
For example, the term
$\vcenter{\begin{xy}<.2cm,0cm>:
(2,4)="1";
"1";(1,3)*+!U{\color{red}6}**\dir{-};
"1";(6,3)="2"**\dir{-};
"2";(4,2)="3"**\dir{-};
"3";(3,1)*+!U{\color{red}1}**\dir{-};
"3";(5,1)*+!U{\color{red}2}**\dir{-};
"2";(7,2)*+!U{\color{red}3}**\dir{-};
\end{xy}}$
of $\J\left(
\vcenter{\begin{xy}<.2cm,0cm>:
(2,2)="1";
"1";(1,1)*+!U{\color{red}1}**\dir{-};
"1";(3,1)*+!U{\color{red}2}**\dir{-};
\end{xy}},
{\color{red}3},{\color{red}6}\right)$ has
the two aligned renderings shown in \autoref{TwoAlignedRenderings}
but only the second can be used to construct a relation,
since the terms of the first aligned rendering cancel
the other terms of $\J\left(
\vcenter{\begin{xy}<.2cm,0cm>:
(2,2)="1";
"1";(1,1)*+!U{\color{red}1}**\dir{-};
"1";(3,1)*+!U{\color{red}2}**\dir{-};
\end{xy}},
{\color{red}3},{\color{red}6}\right)$.

We conjecture that the relations above generate the ideal
of relations for the quiver presentation of $\Sigma\left(\sym{n}\right)$
in the following way.

\begin{conjecture}\label{MainConjecture}
The descent algebra $\Sigma\left(\sym{n}\right)$ has a presentation
as the path algebra $kQ_n$ subject to the relations
$\mathcal{S}\cap kQ_n$ and $p.R$ 
for all partitions $p$ of $n$ and
all $R\in\mathcal{R}$.
In particular, the relations all have length two or three.
\end{conjecture}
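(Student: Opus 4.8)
The plan is to split the desired equality of ideals into two containments, one of which is already available. Write $I'$ for the two-sided ideal of $kQ_n$ generated by $\mathcal{S}\cap kQ_n$ together with all elements $p.R$ where $p$ is a partition of $n$ and $R\in\mathcal{R}$. By \autoref{ExtQuiver}, \autoref{IotaInjective}, and \autoref{IotaSurjectiveModKernel} we have $\Sigma(\sym{n})^{\mathsf{op}}\cong kQ_n/I$ with $I=\iota^{-1}(\ker\Delta)$, so \autoref{MainConjecture} is precisely the statement $I=I'$. One inclusion is already done: \autoref{BranchProposition} gives $p.R\in\ker(\E\circ\iota)$, and since $\Delta=\pi\circ\E$ by \autoref{DeltaPiE} this lies in $\ker(\Delta\circ\iota)=I$; likewise the paragraph constructing $\mathcal{S}$ shows $\mathcal{S}\subseteq\ker(\Delta\circ\iota)=I$. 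Hence $I'\subseteq I$, and everything reduces to the reverse inclusion $I\subseteq I'$, equivalently to the bound $\dim_k(kQ_n/I')\le\dim_k\Sigma(\sym{n})$, where the right side is the number of compositions of $n$; this bound forces the canonical surjection $kQ_n/I'\to kQ_n/I$ to be an isomorphism.

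To establish $I\subseteq I'$ I would exploit the factorization $\Delta=\pi\circ\E$ and treat the two families of relations in turn. By \autoref{MainFactorization}, $\E\circ\iota\colon kQ_n\to k\mathcal{M}^+_n$ is surjective, so $kQ_n/\ker(\E\circ\iota)\cong k\mathcal{M}^+_n$, and under this isomorphism $I/\ker(\E\circ\iota)$ corresponds to $\ker\pi\cap k\mathcal{M}^+_n=(\mathcal{N}+\mathcal{J})\cap k\mathcal{M}^+_n$. The first half is to show that the branch relations $p.R$ generate $\ker(\E\circ\iota)$: by design these relations encode the failure of unique factorization illustrated at the start of \autoref{QuiverSection}, and I would prove they suffice by running the reduction of \autoref{MainFactorization}, together with \autoref{FMinimum}, in reverse, using the lexicographic order of \autoref{FactorizationSection} as the induction parameter to rewrite any element of $\ker(\E\circ\iota)$ into the ideal they generate. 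The second half is to show that, modulo that ideal, the relations $\mathcal{S}\cap kQ_n$ already cut out all of $(\mathcal{N}+\mathcal{J})\cap k\mathcal{M}^+_n$. Here one would use that $\mathcal{N}+\mathcal{J}$ is the defining ideal of the free Lie algebra, that the anticommutativity part $\mathcal{N}$ is absorbed by the aligned convention (every node of an aligned forest satisfies $\overline{Z_1}<\overline{Z_2}$), so that only Jacobi relations remain, and that the elements of $\mathcal{S}$ are precisely the images under $\E\circ\iota$ of aligned renderings of the $\J$-expressions. A uniform way to finish either half is to orient all the relations as a terminating rewriting system on paths $p.B$, prove confluence, and count the normal forms, comparing them globally with the number of compositions of $n$ or, vertex pair by vertex pair, with the numbers $\dim_k e_q\,\Sigma(\sym{n})^{\mathsf{op}}\,e_p$ known from Garsia and Reutenauer's computation of the Cartan invariants.

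The genuinely hard part is the second half, and in particular this confluence and counting step. The trouble is that the Jacobi relations in $\mathcal{S}$ are not canonical: an aligned rendering of a $\J$-expression is far from unique (compare \autoref{TwoAlignedRenderings}), and $\mathcal{S}$ is specified only indirectly, through $\E\circ\iota$ and the particular choices made in the proof of \autoref{MainFactorization}, so even writing the rewriting rules down precisely — let alone verifying that every overlap, whether of two branch relations, of a three-term branch relation with a Jacobi relation, or of two Jacobi relations, resolves — is delicate. One must also pin down which $\J$-expressions are relevant for $\sym{n}$, i.e. which survive intersection with $kQ_n$, and check that these overlaps manufacture no relation outside the two advertised families. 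I expect this to be the whole obstruction, which is presumably why \autoref{MainConjecture} is a conjecture rather than a theorem, and the explicit presentation of $\Sigma(\sym{8})$ in \autoref{ExampleSection} is offered as evidence in its place. The most promising route to bypass the overlap bookkeeping would be to construct directly a bijection between the normal-form paths and a combinatorial basis of $\Sigma(\sym{n})$ — for example one adapted to Garsia and Reutenauer's description of the projective indecomposable modules — thereby reducing \autoref{MainConjecture} to the assertion that the proposed relations drive every path to normal form.
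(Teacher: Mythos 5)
The statement you are proving is a conjecture: the paper offers no proof of it, only computational verification for $n\le 15$ (via minimal projective resolutions computed in \textsf{GAP}) and the worked example for $\Sigma\left(\sym{8}\right)$ in \autoref{ExampleSection}. Your reduction is correct as far as it goes: with $I=\iota^{-1}\left(\ker\Delta\right)=\ker\left(\Delta\circ\iota\right)$ one has $\Sigma\left(\sym{n}\right)^{\mathsf{op}}\cong kQ_n/I$ by \autoref{LIsomorphism}, \autoref{IotaInjective} and \autoref{IotaSurjectiveModKernel}, and the containment $I'\subseteq I$ is exactly what the paper itself establishes (\autoref{BranchProposition} for the branch relations, the discussion preceding the conjecture for $\mathcal{S}$), so the conjecture is precisely the reverse inclusion $I\subseteq I'$, equivalently the bound $\dim_k kQ_n/I'\le 2^{n-1}$, the number of compositions of $n$.

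That reverse inclusion is where your argument stops being a proof and becomes a plan, as you yourself acknowledge. Two specific soft spots: first, the claim that, modulo the ideal generated by the branch relations, alignment ``absorbs'' $\mathcal{N}$ so that only Jacobi relations remain is not automatic --- an element of $\left(\mathcal{N}+\mathcal{J}\right)\cap k\mathcal{M}^+_n$ need not decompose as a combination of aligned renderings of individual $\J$-expressions juxtaposed with compositions, and proving that it does (or controlling the failure) is essentially the whole problem, especially since \autoref{Statistics} shows the listed relations are not a minimal generating set, so any normal-form count must tolerate redundancy; second, the rewriting/confluence/normal-form-counting step is not carried out, and since $\mathcal{S}$ is specified only indirectly (through non-unique aligned renderings, cf.\ \autoref{TwoAlignedRenderings}, and through the choices made in the reduction of \autoref{MainFactorization}), even writing the rewriting system down precisely is nontrivial. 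So there is a genuine gap --- but it is exactly the gap that makes \autoref{MainConjecture} a conjecture rather than a theorem; nothing in the paper closes it either, and your identification of the easy half and of the precise remaining statement agrees with the paper's own framing.
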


We have verified \autoref{MainConjecture} through a computer
calculation for $n\le 15$.
In fact, we have implemented a procedure in \textsf{GAP} \cite{GAP4}
which calculates minimal projective resolutions over
the algebra $A=kQ_n/\ker\left(\Delta\circ\iota\right)$ of the simple module
$\left(A/\rad{A}\right)p$ for all partitions $p$ of $n$.
One result of the calculation is a minimal generating set
of $\ker\left(\Delta\circ\iota\right)$ which can be used to confirm
that the presentation in \autoref{MainConjecture} is correct
for small~$n$.

\autoref{Statistics} shows the minimal number of relations
for the presentation of $\Sigma\left(\sym{n}\right)$ for $n\le 15$.
The table also shows the numbers of branch and Jacobi relations.
Note that when $n\ge 10$ the total number of
branch and Jacobi relations exceeds
the size of the minimal generating set.
Nonetheless, the ideal generated by
the branch and Jacobi relations is exactly
$\ker\left(\Delta\circ\iota\right)$ in every case shown in
\autoref{Statistics}.

\begin{table}
\caption{Numbers of Relations}\label{Statistics}
\[\begin{array}{cccc}
n&\text{Branch}&\text{Jacobi}&\text{Minimal}\\\hline
6&0&1&1\\
7&1&3&4\\
8&4&7&11\\
9&10&14&24\\
10&22&29&48\\
11&44&51&90\\
12&86&89&160\\
13&152&146&270\\
14&265&240&444\\
15&441&369&705
\end{array}\]
\end{table}

\section{Example}\label{ExampleSection}
As an example of \autoref{MainConjecture}
we calculate the presentation for $\Sigma\left(\sym{8}\right)$.
The quiver for this presentation is shown in \autoref{Q8}.
To calculate the branch relations, we list
all $R\in\mathcal{R}$
and apply them to all vertices $p$ of $Q_8$.
Those resulting in nonzero relations are shown in the column labeled $P$
of \autoref{Q8Table}.
To calculate the Jacobi relations, we list all tuples
$x,y,z$ and $x_1,x_2,y,z$ satisfying the conditions in
the definition of $\mathcal{S}$. For each partition $q=q_1q_2\cdots q_j$
completing $x+y+z$ or $x_1+x_2+y+z$ to
a composition $p$ of $n$ we find an element $P\in kQ$
for which $\E\left(\iota\left(P\right)\right)
=\sum_{i=1}^m\left[A_iq\right]$ where
$\sum_{i=1}^mA_i$ is an aligned rendering of 
$\J\left({\color{red}x},{\color{red}y},{\color{red}z}\right)$
or $\J\left(\vcenter{\begin{xy}<.2cm,0cm>:
(2,2)="1";
"1";(1,1)*+!U{\color{red}x_1}**\dir{-};
"1";(3,1)*+!U{\color{red}x_2}**\dir{-};
\end{xy}},
{\color{red}y},{\color{red}z}\right)$.
These relations are also shown in \autoref{Q8Table}.

\begin{table}
\caption{Relations for $\Sigma\left(\sym{8}\right)$}\label{Q8Table}
\renewcommand{\arraystretch}{1.5}
\[\begin{array}{rcl}
p&R&P\\\hline
35&\lb{1}{2}\rb{1}{4}-\lb{1}{4}\rb{1}{2}&jq-kt\\
134&\lb{1}{2}\mb{1}{2}\rb{1}{3}
+\lb{1}{3}\mb{1}{2}\rb{1}{2}
-2\lb{1}{2}\mb{1}{3}\rb{1}{2}
&adm-2ack\\
35&\lb{1}{2}\mb{1}{2}\rb{2}{3}
+\lb{2}{3}\mb{1}{2}\rb{1}{2}
-2\lb{1}{2}\mb{2}{3}\rb{1}{2}
&bis-2bhq\\
44&\lb{1}{3}\mb{1}{3}\rb{1}{2}
+\lb{1}{2}\mb{1}{3}\rb{1}{3}
-2\lb{1}{3}\mb{1}{2}\rb{1}{3}
&dmu-2cku\\
116&\J\left(\vcenter{\begin{xy}<.2cm,0cm>:
(2,2)="1";
"1";(1,1)*+!U{\color{red}1}**\dir{-};
"1";(3,1)*+!U{\color{red}2}**\dir{-};
\end{xy}},
{\color{red}1},{\color{red}2}\right)
&acl-aen\\
17&\J\left({\color{red}1},{\color{red}2},{\color{red}4}\right)
&jr+kv-lw\\
17&\J\left(\vcenter{\begin{xy}<.2cm,0cm>:
(2,2)="1";
"1";(1,1)*+!U{\color{red}1}**\dir{-};
"1";(3,1)*+!U{\color{red}2}**\dir{-};
\end{xy}},
{\color{red}1},{\color{red}3}\right)
&2ckv+clw-dmv-enw\\
26&\J\left(\vcenter{\begin{xy}<.2cm,0cm>:
(2,2)="1";
"1";(1,1)*+!U{\color{red}1}**\dir{-};
"1";(3,1)*+!U{\color{red}2}**\dir{-};
\end{xy}},
{\color{red}1},{\color{red}2}\right)
&bgo-bhp\\
8&\J\left({\color{red}1},{\color{red}2},{\color{red}5}\right)
&px+qy-rz\\
8&\J\left(\vcenter{\begin{xy}<.2cm,0cm>:
(2,2)="1";
"1";(1,1)*+!U{\color{red}1}**\dir{-};
"1";(3,1)*+!U{\color{red}3}**\dir{-};
\end{xy}},
{\color{red}1},{\color{red}3}\right)
&mty-mvz\\
8&\J\left(\vcenter{\begin{xy}<.2cm,0cm>:
(2,2)="1";
"1";(1,1)*+!U{\color{red}1}**\dir{-};
"1";(3,1)*+!U{\color{red}2}**\dir{-};
\end{xy}},
{\color{red}2},{\color{red}3}\right)
&gox-hpx-2hqy-isy
\end{array}\]
\end{table}

As mentioned in \autoref{RelationsSection}
we have verified through a computer calculation that the quotient of $kQ_8$
by the ideal generated by the elements in \autoref{Q8Table}
is isomorphic to $\Sigma\left(\sym{8}\right)$.

\bibliographystyle{plain}
\bibliography{artikel}

\begin{thebibliography}{10}

\bibitem{blue}
Ibrahim Assem, Daniel Simson, and Andrzej Skowro{\'n}ski.
\newblock {\em Elements of the representation theory of associative algebras.
  {V}ol. 1}, volume~65 of {\em London Mathematical Society Student Texts}.
\newblock Cambridge University Press, Cambridge, 2006.
\newblock Techniques of representation theory.

\bibitem{atkinson}
M.~D. Atkinson.
\newblock A new proof of a theorem of {S}olomon.
\newblock {\em Bull. London Math. Soc.}, 18(4):351--354, 1986.

\bibitem{revisited}
M.~D. Atkinson.
\newblock Solomon's descent algebra revisited.
\newblock {\em Bull. London Math. Soc.}, 24(6):545--551, 1992.

\bibitem{bauer}
Thorsten Bauer.
\newblock \"{U}ber die {S}truktur der {S}olomon-{A}lgebren: ein {Z}ugang \"uber
  einen {D}ifferential- und {I}ntegralkalk\"ul.
\newblock {\em Bayreuth. Math. Schr.}, (63):1--102, 2001.

\bibitem{quiverb1}
F.~Bergeron and N.~Bergeron.
\newblock A decomposition of the descent algebra of the hyperoctahedral group.
  {I}.
\newblock {\em J. Algebra}, 148(1):86--97, 1992.

\bibitem{idempotents}
F.~Bergeron, N.~Bergeron, R.~B. Howlett, and D.~E. Taylor.
\newblock A decomposition of the descent algebra of a finite {C}oxeter group.
\newblock {\em J. Algebraic Combin.}, 1(1):23--44, 1992.

\bibitem{quiverb2}
Fran{\c{c}}ois Bergeron and Nantel Bergeron.
\newblock Orthogonal idempotents in the descent algebra of {$B_n$} and
  applications.
\newblock {\em J. Pure Appl. Algebra}, 79(2):109--129, 1992.

\bibitem{module2}
Dieter Blessenohl and Hartmut Laue.
\newblock On the descending {L}oewy series of {S}olomon's descent algebra.
\newblock {\em J. Algebra}, 180(3):698--724, 1996.

\bibitem{module1}
Dieter Blessenohl and Hartmut Laue.
\newblock The module structure of {S}olomon's descent algebra.
\newblock {\em J. Aust. Math. Soc.}, 72(3):317--333, 2002.

\bibitem{ncct}
Dieter Blessenohl and Manfred Schocker.
\newblock {\em Noncommutative character theory of the symmetric group}.
\newblock Imperial College Press, London, 2005.

\bibitem{loewy1}
C.~Bonnaf{\'e} and G.~Pfeiffer.
\newblock Around {S}olomon's descent algebras.
\newblock {\em Algebr. Represent. Theory}, 11(6):577--602, 2008.

\bibitem{GAP4}
The GAP~Group.
\newblock {\em {GAP -- Groups, Algorithms, and Programming, Version 4.4.12}},
  2008.

\bibitem{decomposition}
A.~M. Garsia and C.~Reutenauer.
\newblock A decomposition of {S}olomon's descent algebra.
\newblock {\em Adv. Math.}, 77(2):189--262, 1989.

\bibitem{exceptional}
G{\"o}tz Pfeiffer.
\newblock Quiver presentations for descent algebras of exceptional type.
\newblock 2008.

\bibitem{gotz}
G{\"o}tz Pfeiffer.
\newblock A quiver presentation for {S}olomon's descent algebra.
\newblock {\em Adv. Math.}, 220(5):1428--1465, 2009.

\bibitem{quiverb3}
Franco~V. Saliola.
\newblock On the quiver of the descent algebra.
\newblock {\em J. Algebra}, 320(11):3866--3894, 2008.

\bibitem{loewy2}
Franco~V. Saliola.
\newblock The {L}oewy length of the descent algebra of type {D}.
\newblock {\em Algebr. Represent. Theory}, 13(2):243--254, 2010.

\bibitem{quivera}
Manfred Schocker.
\newblock The descent algebra of the symmetric group.
\newblock In {\em Representations of finite dimensional algebras and related
  topics in {L}ie theory and geometry}, volume~40 of {\em Fields Inst.
  Commun.}, pages 145--161. Amer. Math. Soc., Providence, RI, 2004.

\bibitem{solomon}
Louis Solomon.
\newblock A {M}ackey formula in the group ring of a {C}oxeter group.
\newblock {\em J. Algebra}, 41(2):255--264, 1976.

\end{thebibliography}
\end{document}